\numberwithin{equation}{section}
\newtheorem{theorem}{Theorem}[section]
\newtheorem{proposition}[theorem]{Proposition}
\newtheorem{corollary}[theorem]{Corollary}
\theoremstyle{definition}
\theoremstyle{definition} 
\newcommand{\BR}{\mathbb R}
\newcommand{\lj}{[\![}
\newcommand{\rj}{]\!]}
\newcommand{\pd}{\partial}
\newcommand{\bea}{\begin{eqnarray}}
\newcommand{\eea}{\end{eqnarray}}
\newcommand{\beas}{\begin{eqnarray*}}
\newcommand{\eeas}{\end{eqnarray*}}
\newcommand{\beq}{\begin{equation}}
\newcommand{\eeq}{\end{equation}}
\newcommand{\cA}{\mathcal A}
\newcommand{\cB}{\mathcal B}
\newcommand{\cD}{\mathcal D}
\newcommand{\cE}{\mathcal E}
\newcommand{\cG}{\mathcal G}
\newcommand{\cR}{\mathcal R}
\newcommand{\cL}{\mathcal L}
\newcommand{\cN}{\mathcal N}
\newcommand{\cMH}{\mathcal{MH}}
\newcommand{\cSM}{\mathcal{SM}}
\newcommand{\cSX}{\mathcal{SX}}
\newcommand{\BB}{\mathbb B}
\newcommand{\C}{\mathbb C}
\newcommand{\R}{\mathbb R}
\newcommand{\LL}{\mathbb L}
\newcommand{\EE}{\mathbb E}
\newcommand{\FF}{\mathbb F}
\newcommand{\nn}{\nonumber}
\newcommand{\Ver}{|\!|}
\newcommand{\ve}{\varepsilon}
\DeclareMathSymbol{\complement}{\mathord}{AMSa}{"7B}
\def\vv<#1>{\langle #1\rangle}
\def\Vv<#1>{\bigl\langle #1\bigr\rangle}
\begin{document}


\title[Incompressible Two-Phase Flows with Phase Transitions]
{Qualitative Behaviour of Incompressible Two-Phase Flows with Phase Transitions:\\
 The Case of Non-Equal Densities}

\author[J.~Pr\"uss]{Jan Pr\"uss}
\address{Institut f\"ur Mathematik \\
         Martin-Luther-Universit\"at Halle-Witten\-berg\\
         D-06099 Halle, Germany}
\email{jan.pruess@mathematik.uni-halle.de}

\author[S.~Shimizu]{Senjo Shimizu}
\address{Department of Mathematics, Shizuoka University\\
      422-8529 Shizuoka, Japan}
\email{ssshimi@ipc.shizuoka.ac.jp}

\author[M.~Wilke]{Mathias Wilke}
\address{Institut f\"ur Mathematik \\
         Martin-Luther-Universit\"at Halle-Witten\-berg\\
         D-06099 Halle, Germany}
\email{mathias.wilke@mathematik.uni-halle.de}

\thanks{S.S. expresses her thanks for hospitality
to the Institute of Mathematics, Martin-Luther-Universit\"at
Halle-Wittenberg, where important parts of this work originated.
The research of S.S was partially supported by
JSPS Grant-in-Aid for Scientific Research (B) - 24340025
and Challenging Exploratory Research - 23654048, MEXT}

\begin{abstract}
Our study of a basic model for incompressible two-phase flows with phase transitions consistent with  thermodynamics in the case of constant but non-equal densities of the phases, begun in \cite{PrSh12} is continued. We extend our well-posedness result to general geometries, study the stability of the equilibria of the problem, and show that a solution which does not develop singularities exist globally. And if its limit set contains a stable equilibrium it converges to this equilibrium as time goes to infinity, in the natural state manifold for the problem in an $L_p$-setting.
\end{abstract}

\maketitle

{\small\noindent
{\bf Mathematics Subject Classification (2010):}\\
Primary: 35R35, Secondary: 35Q30, 76D45, 76T10.\vspace{0.1in}\\
{\bf Key words:} Two-phase Navier-Stokes equations, surface tension, phase transitions,
 entropy, semiflow, stability, compactness, generalized principle of linearized stability, convergence to equilibria.} \vspace{0.1in}\\

\begin{center}
{\small Version of \today}
\end{center}


\section{Introduction}
Let $\Omega\subset \R^{n}$ be a bounded domain of class $C^{3-}$, $n\geq2$.
$\Omega$ contains two phases: at time $t$, phase $i$ occupies
subdomain $\Omega_i(t)$ of
$\Omega$. Assume $\partial \Omega_1(t)\cap\partial \Omega=\emptyset$; this means no
{\em boundary intersection}.
The closed compact hypersurface $\Gamma(t):=\partial \Omega_1(t)\subset \Omega$
forms the interface between the phases.

Let
$u$ denote the velocity field,
$\pi$ the pressure field,
$T(u,\pi,\theta)$ the stress tensor,  $D(u)=(\nabla u +[\nabla u]^{\sf T})/2$ the rate of strain tensor,
 $\theta$  the (absolute) temperature field,
 $\nu_\Gamma$ the outer normal  of $\Omega_1$,
 $u_\Gamma$ the velocity field of the interface,
 $V_\Gamma=u_\Gamma\cdot\nu_\Gamma$ the normal velocity of $\Gamma(t)$,
 $H_\Gamma=H(\Gamma(t))=-{\rm div}_\Gamma \nu_\Gamma$ the curvature of $\Gamma(t)$,
 $j$ the phase flux, and
 $[\![v]\!]=v_2-v_1$ the jump of the variable $v$ across $\Gamma(t)$.

Several quantities are derived from the specific {\em free energies} $\psi_i(\theta)$ as follows. $\epsilon_i(\theta):= \psi_i(\theta)+\theta\eta_i(\theta)$ means the
internal energy in phase $i$, $\eta_i(\theta) =-\psi_i^\prime(\theta)$ the entropy,
 $\kappa_i(\theta):= \epsilon^\prime_i(\theta)>0$ the  heat capacity, and
$l(\theta)=\theta[\![\psi^\prime(\theta)]\!]=-\theta[\![\eta(\theta)]\!]$ the latent heat. Further $d_i(\theta)>0$ denotes the coefficient of heat conduction in Fourier's law, $\mu_i(\theta)>0$ the viscosity in Newton's law, $\rho_1,\rho_2>0$ the constant, positive densities of the phases,
  and $\sigma>0$ the constant coefficient of surface tension.
 In the sequel we drop the index $i$, as there is no danger of confusion; we just keep in mind that the coefficients depend on the phases.

By  an {\em Incompressible Two-Phase Flow with Phase Transition} we mean the following problem with sharp interface:

\medskip

\noindent
Find a family of closed compact hypersurfaces $\{\Gamma(t)\}_{t\geq0}$ contained in $\Omega$
and appropriately smooth functions $u:\R_+\times \bar{\Omega} \to \R^n$, and $\pi,\theta:\R_+\times\bar{\Omega}\rightarrow\R$ such that
\begin{align}\label{NS}
\rho(\partial_t u + u\cdot\nabla u) -{\rm div }\, T(u,\pi,\theta) &=0\quad & \mbox{ in }\; \Omega\setminus \Gamma(t),\nn\\
T(u,\pi,\theta) =2\mu(\theta)D(u) -\pi I,\quad {\rm div}\, u&=0\quad & \mbox{ in }\; \Omega\setminus \Gamma(t),\nn\\
[\![\frac{1}{\rho}]\!]j^2\nu_\Gamma-[\![T(u,\pi,\theta) \nu_\Gamma]\!] =\sigma H_\Gamma \nu_\Gamma\,\quad
[\![u]\!]&= [\![\frac{1}{\rho}]\!] j \nu_\Gamma\quad & \mbox{ on }\Gamma(t),\\
u=0 \;\mbox{ on }\; \partial\Omega,\quad u_{|_{t=0}}&=u_0\quad &\mbox{ in }\Omega\setminus\Gamma(t).\nn
\end{align}

\begin{align}\label{Heat}
\rho\kappa(\theta)(\partial_t \theta  + u\cdot\nabla \theta) -{\rm div}\,(d(\theta)\nabla\theta) -2\mu|D(u)|_2^2&=0 \quad& \mbox{ in }\; \Omega\setminus \Gamma(t),\nn\\
\theta[\![\eta(\theta)]\!] j -[\![d(\theta)\partial_{\nu_\Gamma} \theta]\!]=0,  \quad
\mbox{}[\![\theta]\!]&=0\quad &\mbox{ on }\;\Gamma(t),\\
\partial_\nu \theta =0\;\mbox{ on }\partial\Omega,\quad
\theta_{|_{t=0}}&=\theta_0\quad &\mbox{ in }\;\Omega.\nn
\end{align}

\begin{align}\label{GTS}
[\![\psi(\theta)]\!] &= -[\![\frac{1}{2\rho^2}]\!]j^2 +[\![\frac{T(u,\pi,\theta) \nu_\Gamma\cdot\nu_\Gamma}{\rho}]\!]\quad &\mbox{ on }\Gamma(t).\nn\\
V_\Gamma &=u_\Gamma\cdot\nu_\Gamma = u\cdot\nu_\Gamma -\frac{1}{\rho} j\quad &\mbox{ on }\Gamma(t),\\
\Gamma_{|_{t=0}}&=\Gamma_0\nn.
\end{align}
This model is explained in more detail in our previous paper \cite{PSSS12}. It is based on a model recently proposed by Anderson, Gurtin et al.\ \cite{Gur07}, see also the monographs by Ishii \cite{Ish75} and \cite{IsTa06}, and it is thermodynamically consistent in the sense that in absence of exterior forces and heat sources, the total energy is preserved and the total entropy is nondecreasing. It is in some sense the simplest sharp interface model for incompressible Newtonian two-phase flows taking into account phase transitions driven by temperature, which is consistent with thermodynamics. We want be more precise on this topic.

The total energy of the system is given by
$${\sf E}(u,\theta,\Gamma)= \int_\Omega[ \frac{\rho}{2}|u|^2+\rho\epsilon] dx + \sigma|\Gamma|,$$
and the  total entropy reads
 $$\Phi(\theta,\Gamma)=\int_\Omega \rho\eta(\theta)dx.$$
For the case $\rho_1\neq\rho_2$ the following result is valid.
\begin{theorem} Let $\sigma,\rho_1,\rho_2>0$, $\rho_1\neq\rho_2$, and suppose that $\psi_j\in C^3(0,\infty)$, $\mu_j,d_j\in C^2(0,\infty)$,
 $$-\psi^{\prime\prime}_j(s),\mu_j(s),d_j(s)>0, \quad \mbox{ for } s>0,\;j=1,2.$$
  Then  the following assertions hold.\\
{\bf (i)} The total energy ${\sf E}$ is conserved along smooth solutions.\\
{\bf (ii)} The negative total entropy $-\Phi$ is a strict Ljapunov functional, which means that $-\Phi$ is strictly decreasing along nonconstant smooth solutions.\\
{\bf (iii)} The equilibria without boundary contact are zero velocity, constant temperature, constant pressure in each phase, and $\Omega_1$ consists of a finite number of nonintersecting balls of equal size.\\
{\bf (iv)} The critical points of the entropy functional with prescribed energy and phase volumes are precisely the equilibria of the system.\\
{\bf (v)} Local maxima of the entropy functional with prescribed energy and phase volumes are precisely the equilibria with connected interface.
\end{theorem}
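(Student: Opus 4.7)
\medskip

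\noindent\textbf{Proof plan.}
The proof splits naturally into the dynamical assertions (i),(ii), the geometric characterization (iii), and the variational statements (iv),(v).

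For (i) I would differentiate ${\sf E}(u,\theta,\Gamma)$ along a smooth solution. The bulk terms produce $\int_\Omega \rho u\cdot\partial_t u+\int_\Omega \rho\kappa(\theta)\partial_t\theta$; after inserting \eqref{NS},\eqref{Heat} and integrating by parts in each $\Omega_i(t)$, one picks up jump terms across $\Gamma(t)$ together with the time derivative $\frac{d}{dt}|\Gamma(t)|=-\int_\Gamma H_\Gamma V_\Gamma\,d\cH$ of the surface area. The viscous and Fourier bulk terms cancel between the kinetic/internal energy balances, and the remaining surface integrals are regrouped using the mass, momentum and Gibbs--Thomson jump conditions in \eqref{NS},\eqref{Heat},\eqref{GTS}; the relation $[\![u]\!]=[\![1/\rho]\!]j\nu_\Gamma$ and $V_\Gamma=u\cdot\nu_\Gamma-j/\rho$ are used to eliminate $j$ and $V_\Gamma$. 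Everything collapses to zero, which is a pure bookkeeping computation. For (ii) one repeats the calculation for $\Phi(\theta,\Gamma)=\int_\Omega \rho\eta(\theta)\,dx$, using $\eta=-\psi^\prime$ and $\kappa=\theta\eta^\prime$; the net outcome is the entropy production identity
\begin{equation*}
\frac{d}{dt}\Phi=\int_\Omega\frac{d(\theta)|\nabla\theta|^2}{\theta^2}dx+\int_\Omega\frac{2\mu(\theta)|D(u)|_2^2}{\theta}dx+\int_{\Gamma(t)}\frac{j^2}{\theta}\Bigl\lj \psi(\theta)+\tfrac{1}{2\rho^2}j^2-\tfrac{T\nu_\Gamma\cdot\nu_\Gamma}{\rho}\Bigr\rj\big/\cdot\, d\cH,
\end{equation*}
where the last integrand is nonnegative once \eqref{GTS} is substituted (in fact it vanishes, and the genuine entropy production along the interface comes from the Stefan--type relation $\theta[\![\eta]\!]j=[\![d\partial_{\nu_\Gamma}\theta]\!]$, producing a nonnegative term proportional to $j^2/\theta$ after rewriting). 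Hence $\dot\Phi\ge0$, and strictness follows because $\dot\Phi=0$ forces $D(u)\equiv0$, $\nabla\theta\equiv0$ and $j\equiv0$, which by Korn's inequality and the no-slip condition yields $u\equiv0$ and $\theta\equiv{\rm const}$, so the solution is stationary.

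For (iii) I start from an equilibrium. The vanishing of the dissipation shown in (ii) gives $u\equiv0$, $\theta\equiv\theta_*={\rm const}$, $j\equiv0$. Then the Heat equation and its jump conditions are satisfied trivially. In \eqref{NS} the bulk equation reduces to $\nabla\pi=0$ in each phase, hence $\pi$ is constant in $\Omega_1$ and $\Omega_2$, and the momentum jump becomes $[\![\pi]\!]\nu_\Gamma=\sigma H_\Gamma\nu_\Gamma$ on $\Gamma$. Thus $H_\Gamma$ is constant, and since $\Gamma\subset\Omega$ is a closed compact embedded hypersurface of constant mean curvature without boundary contact, the Alexandrov theorem forces $\Gamma$ to be a finite disjoint union of spheres. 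Plugging $u=0$, $j=0$, constant $\theta$ into \eqref{GTS} gives $[\![\psi(\theta_*)]\!]=[\![\pi/\rho]\!]$, which together with $[\![\pi]\!]=\sigma H_\Gamma$ fixes the common radius of all spheres, so they have equal size.

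For (iv) and (v) I set up the constrained variational problem: maximize $\Phi$ over the state manifold subject to ${\sf E}={\rm const}$ and $|\Omega_1|={\rm const}$ (the latter being the correct conserved phase-volume constraint in the non-equal density case, because the mass of each phase is independently conserved). I introduce Lagrange multipliers $\lambda,\mu$ and compute the first variation in the directions $(\delta u,\delta\theta,\delta\Gamma)$, where interface variations are parametrized by a normal scalar field $h$ on $\Gamma$. The Euler--Lagrange equations read: $\delta u$-equation gives $u=0$; $\delta\theta$-equation gives $\theta$ constant and fixes $\lambda=1/\theta$; $\delta\Gamma$-equation gives exactly the Gibbs--Thomson relation \eqref{GTS} at $u=0$ together with $H_\Gamma={\rm const}$, i.e.\ the equilibrium conditions from (iii). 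This proves (iv). For (v) I compute the second variation of $\Phi-\lambda{\sf E}-\mu|\Omega_1|$ at an equilibrium; the hard term is the interface contribution
\begin{equation*}
\bigl\langle h,\,\sigma(-\Delta_\Gamma h-|\Pi_\Gamma|^2 h)/\theta_*\bigr\rangle_{L_2(\Gamma)}+\text{lower-order bulk terms},
\end{equation*}
on the tangent space $\{h:\int_\Gamma h\,d\cH=0\}$ coming from the volume constraint. On a single sphere of the equilibrium radius the stability operator $-\Delta_\Gamma-|\Pi_\Gamma|^2$ has first eigenvalue $0$ with constant eigenfunctions (killed by $\int h=0$) and the next eigenspace is spanned by the coordinate functions (killed again when one also restricts to the tangent space of the admissible class), so the form is negative semidefinite and strict negativity on the physically allowed directions follows after factoring the trivial translation modes---this gives a local maximum. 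For disconnected $\Gamma$, splitting an individual sphere against its volume-constrained neighbour yields a positive direction (one sphere grows while another shrinks at fixed total volume, decreasing surface area), so local maximality fails.

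The main obstacle is the second-variation analysis in (v): one has to diagonalize the constrained Hessian and separately treat the bulk variables $\delta u$, $\delta\theta$, the interface perturbation $h$, and their coupling through the linearized Gibbs--Thomson and energy constraints, then recognize that the connectedness of $\Gamma$ is exactly what distinguishes negative semidefiniteness (single sphere) from indefiniteness (two or more spheres).
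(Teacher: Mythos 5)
The paper never proves this theorem itself: it is stated in the introduction as background, with the arguments relegated to the companion works on the equal-density case (\cite{PSSS12}, \cite{PSZ12}); your route (transport theorem plus the jump conditions for (i)--(ii), Alexandrov's theorem for (iii), constrained first and second variations with Lagrange multipliers for (iv)--(v), and the volume-transfer instability for disconnected $\Gamma$) is exactly that standard route, and the choice of constraints is right: since $\operatorname{div}u=0$, $u=0$ on $\partial\Omega$ and $[\![\rho]\!]V_\Gamma=[\![\rho u\cdot\nu_\Gamma]\!]$ with $[\![\rho]\!]\neq0$, one gets $\int_\Gamma V_\Gamma=0$, so the phase volumes are indeed individually conserved.

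Two points in your sketch are wrong as stated, though both are repairable. First, in (ii) there is no interfacial entropy production "proportional to $j^2/\theta$" in this model: the Gibbs--Thomson relation \eqref{GTS} is precisely the condition that the interfacial production vanishes identically, and the Stefan-type condition $\theta[\![\eta]\!]j=[\![d\,\partial_{\nu_\Gamma}\theta]\!]$ is an energy balance, not a dissipation mechanism. The correct identity is $\frac{d}{dt}\Phi=\int_\Omega d(\theta)|\nabla\theta|^2/\theta^2\,dx+\int_\Omega 2\mu(\theta)|D(u)|_2^2/\theta\,dx$ with no surface term, and strictness should run: $\dot\Phi=0$ forces $D(u)=0$ and $\nabla\theta=0$, hence $u=0$ by Korn and no-slip, hence $j=[\![u\cdot\nu_\Gamma]\!]/[\![1/\rho]\!]=0$ and $V_\Gamma=0$ (you cannot assume $j=0$ independently, nor invoke an interface production term to get it). Second, in (v) the displayed quadratic form $\langle h,\sigma(-\Delta_\Gamma-|\Pi_\Gamma|^2)h\rangle/\theta_*$ is \emph{positive} semidefinite on mean-zero $h$ over a single sphere (eigenvalues $k(k+n-2)/R^2-(n-1)/R^2\geq0$ for $k\geq1$); what is negative semidefinite is its negative, which is what actually enters $\delta^2(\Phi-\theta_*^{-1}{\sf E}-\mu|\Omega_1|)$ through the term $-\sigma|\Gamma|/\theta_*$. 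The translation modes are moreover genuinely neutral (tangent to the manifold of equilibria), not eliminated by the constraints. Finally, in (iii) note that the pressure constant in $\Omega_1$ may a priori differ between components; you need first $[\![\pi/\rho]\!]=-[\![\psi(\theta_*)]\!]$ from \eqref{GTS} (same on all components, with $\pi_2$ a single constant since $\Omega_2$ is connected) to conclude that $[\![\pi]\!]$, hence $\sigma H_\Gamma$, is the same constant on every component before Alexandrov gives spheres of one common radius; your order of deductions asserts constancy of $H_\Gamma$ prematurely.
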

In equilibrium, the size of the balls follows from conservation of total mass
$$\rho_1|\Omega_1|+\rho_2|\Omega_2|= c_0,$$
hence
$$ m(\omega_n/n)R^n = (\rho_2|\Omega|-c_0)/[\![\rho]\!];$$
once the number of components $m$ of the disperse phase $\Omega_1$ is given, this determines the radius $R>0$ of the balls. The temperature is uniquely determined by the total
energy
$$ \rho_1|\Omega_1|\epsilon_1(\theta)+ \rho_2|\Omega_2|\epsilon_2(\theta)+ \sigma m\omega_n R^{n-1}={\sf E}_0,$$
since the functions $\epsilon_j(\theta)$ are strictly increasing by assumption. Therefore we have an $nm$-parameter family of equilibria, where the parameters are the centers of the $m$  balls.

It turns out that in case $m=1$ each equilibrium is linearly stable and as one main result in this paper we prove its nonlinear stability. We also prove on the other hand, if $m\geq2$ then each equilibrium is unstable, linearly as well as nonlinearly; this is a well-known phenomenon called {\em Ostwald ripening} in the literature. Thus local maxima of the entropy under the
constraints of given energy and phase volumes correspond to the stable equilibria of the system. This is precisely what one would expect physically.

There is a large literature on isothermal incompressible Newtonian two-phase flows without phase transitions, and also on the two-phase Stefan problem with surface tension modeling temperature driven phase transitions. On the other hand, mathematical work on two-phase flow problems including phase transitions are rare. In this direction, we only know the papers by Hoffmann and Starovoitov \cite{HoSt98a,HoSt98b} dealing with a simplified two-phase flow model, and Kusaka and Tani \cite{KuTa99,KuTa02} which are two-phase for temperature but only one phase is moving. The papers of Di Benedetto and Friedman \cite{DBFr86} and Di Benedetto and O'Leary\cite{DBOL93} consider  weak solutions of conduction-convection problems with phase change. However, none of these papers deals with models which are consistent with thermodynamics.

We emphasize that the major difference between the cases of equal or different densities lies in the occurrence of the so-called {\em Stefan currents} which are induced by the jump in the normal velocity across the interface in case $\rho_1\neq\rho_2$. If the densities are equal these are absent, this case which we call {\em temperature-dominated} has been analyzed in \cite{PSSS12} and \cite{PSZ12}. Here we are interested in the {\em velocity-dominated} case, i.e.\ densities $\rho_1\neq\rho_2$.
Note that in this case we may eliminate the phase flux $j$, multiplying the jump condition for $u$ by $\nu_\Gamma$, to the result
$$ j=[\![u\cdot\nu_\Gamma]\!]/[\![1/\rho]\!].$$
The jump condition for $u$ then becomes $P_\Gamma[\![u]\!]=0$, where $P_\Gamma=I-\nu_\Gamma\otimes\nu_\Gamma$. Similarly, multiplying the second equation in \eqref{GTS} with  $\rho$ and taking the jump across $\Gamma(t)$ we obtain the new equation
$$ [\![\rho]\!] V_\Gamma=[\![\rho u\cdot\nu_\Gamma]\!]$$
for the moving interface $\Gamma(t)$.
However, it is useful at several occasions to keep the variable $j$, as it has a clear physical meaning.

In \cite{PrSh12} the first two authors investigated local well-posedness of this problem in a situation of a nearly flat interface, based on $L_p$-maximal regularity of the underlying principal linearization.
It is the purpose of this paper to extend this work to general geometries and to present a qualitative analysis of \eqref{NS}, \eqref{Heat}, \eqref{GTS} in the framework of $L_p$-theory. We study the equilibria of the system which are zero velocities, constant pressures in the phases, constant temperature, and the disperse phase $\Omega_1$ consists of a finite number of non-intersecting balls of the same radius. Thus the equilibria form a manifold, and we prove that an equilibrium is normally stable if and only if the phases are connected, otherwise it is normally hyperbolic; see below for a definition of these notions. We further prove by means of the {\em generalized principle of linearized stability} that an equilibrium is stable for the nonlinear problem if and only if the phases are connected. Furthermore we show  that a solution  which does not develop singularities, in a sense specified below,  exist globally in the natural state manifold $\cSM$ of the system, and it converges to a probably different equilibrium, provided its limit set in $\cSM$ contains a stable equilibrium.

The plan for this  paper is as follows. In Section 2 we employ the {\em direct mapping approach} to transform the problem locally in time to a fixed domain. This is by now a well established method, we refer in particular to \cite{PrSi12} for the necessary geometric background. Section 3 deals with local well-posedness, which is based on {\em maximal $L_p$-regularity} of the underlying principal linearization and the contraction mapping principle. The proof of the crucial maximal regularity result for the arising {\em non-standard asymmetric Stokes-problem} is given in Section 7; by perturbation and localization, it is derived from the corresponding result for flat interfaces \cite{PrSh12}. In Section 4 we study the linearization of the problem at a non-degenerate equilibrium. Employing the {\em generalized principle of linearized stability}, in Section 5 we prove  these stability assertions also for the nonlinear problem. In Section 6  we introduce the natural state manifold $\cSM$ of the system in the $L_p$-setting, and it is shown that the local well-posedness result from Section 3 induces a local semiflow on $\cSM$. We study its asymptotic behavior, employing the negative entropy as a strict Ljapunov functional, relative compactness of bounded semiorbits and the stability result from Section 5.

In a subsequent paper we will extend the results of this contribution to the case where the coefficient of surface tension $\sigma>0$ is no longer constant but temperature-dependent. This allows us to include the so-called {\em Marangoni forces}.

\section{Transformation to a Fixed Domain}

Let $\Omega\subset\R^n$ be a bounded domain with boundary $\partial\Omega$ of class $C^2$, and suppose
$\Gamma\subset\Omega$ is a hyper-surface of class $C^2$,
i.e.\ a $C^2$-manifold which is the boundary of a bounded domain
$\Omega_1\subset\overline{\Omega}_1\subset\Omega$; we then set
$\Omega_2=\Omega\backslash\bar{\Omega}_1$. Note that $\Omega_2$ is connected,
but $\Omega_1$ maybe  disconnected, however, it consists of finitely
many components only, since $\partial\Omega_1=\Gamma$ by assumption is a manifold, at least  of class $C^2$.
Recall that the {\em second order bundle} of $\Gamma$ is given by
$$\cN^2\Gamma:=\{(p,\nu_\Gamma(p),\nabla_\Gamma\nu_\Gamma(p)):\, p\in\Gamma\}.$$
Here $\nu_\Gamma(p)$ denotes the outer normal of $\Omega_1$ at $p\in\Gamma$ and $\nabla_\Gamma$ the surface gradient on $\Gamma$. The Weingarten tensor $L_\Gamma$ on $\Gamma$ is defined by
$$L_\Gamma(p)= -\nabla_\Gamma\nu_\Gamma(p),\quad p\in\Gamma.$$
The eigenvalues $\kappa_j(p)$ of $L_\Gamma(p)$ are the principal curvatures of $\Gamma$ at $p\in\Gamma$, and we have
$|L_\Gamma(p)|=\max_j|\kappa_j(p)|$. The curvature $H_\Gamma(p)$ (more precisely $(n-1)$ times mean curvature) is defined as the trace of $L_\Gamma(p)$, i.e.
$$ H_\Gamma(p)= \sum_{j=1}^{n-1} \kappa_j = -{\rm div}_\Gamma \nu_\Gamma(p),$$
where ${\rm div}_\Gamma$ means surface divergence.
Recall also the
{\em Haussdorff distance} $d_H$ between the two closed subsets $A,B\subset\R^m$,
defined by
$$d_H(A,B):= \max\{\sup_{a\in A}{\rm dist}(a,B),\sup_{b\in B}{\rm dist}(b,A)\}.$$
Then we may approximate $\Gamma$ by a real analytic hyper-surface $\Sigma$ (or merely $\Sigma\in C^3$),
in the sense that the Haussdorff distance of the second order bundles of
$\Gamma$ and $\Sigma$ is as small as we want. More precisely, for each $\eta>0$ there is a
real analytic closed hyper-surface $\Sigma$ such that
$d_H(\cN^2\Sigma,\cN^2\Gamma)\leq\eta$. If $\eta>0$ is small enough, then $\Sigma$
bounds a domain $\Omega_1^\Sigma$ with  $\overline{\Omega_1^\Sigma}\subset\Omega$, and we set $\Omega_2^\Sigma=\Omega\backslash\overline{\Omega_1^\Sigma}$.

It is well known that a hyper-surface $\Sigma$ of class $C^2$ admits a tubular neighborhood,
which means that there is $a>0$ such that the map
\begin{eqnarray*}
&&\Lambda: \Sigma \times (-a,a)\to \R^n \\
&&\Lambda(p,r):= p+r\nu_\Sigma(p)
\end{eqnarray*}
is a diffeomorphism from $\Sigma \times (-a,a)$ onto $\cR(\Lambda)$,
where $\nu_\Sigma(p)$ denotes the outer normal at $p\in\Sigma$.
The inverse
$$\Lambda^{-1}:\cR(\Lambda)\mapsto \Sigma\times (-a,a)$$ of this map
is conveniently decomposed as
$$\Lambda^{-1}(x)=(\Pi_\Sigma(x),d_\Sigma(x)),\quad x\in\cR(\Lambda).$$
Here $\Pi_\Sigma(x)$ means the orthogonal projection of $x$ to $\Sigma$ and $d_\Sigma(x)$ the signed
distance from $x$ to $\Sigma$; so $|d_\Sigma(x)|={\rm dist}(x,\Sigma)$ and $d_\Sigma(x)<0$ if and only if
$x\in \Omega_1^\Sigma$. In particular we have $\cR(\Lambda)=\{x\in \R^n:\, {\rm dist}(x,\Sigma)<a\}$.

Note that on the one hand, $a$ is determined by the curvatures of $\Sigma$, i.e.\ we must have
$$0<a<\min\{1/|\kappa_j(p)|: j=1,\ldots,n-1,\; p\in\Sigma\},$$
where $\kappa_j(p)$ mean the principal curvatures of $\Sigma$ at $p\in\Sigma$.
But on the other hand, $a$ is also connected to the topology of $\Sigma$,
which can be expressed as follows. Since $\Sigma$ is a compact manifold of
dimension $n-1$ it satisfies the ball condition, which means that
there is a radius $r_\Sigma>0$ such that for each point $p\in \Sigma$
there are $x_j\in \Omega^\Sigma_j$, $j=1,2$, such that $B_{r_\Sigma}(x_j)\subset \Omega^\Sigma_j$, and
$\bar{B}_{r_\Sigma}(x_j)\cap\Sigma=\{p\}$. Choosing $r_\Sigma$ maximal,
we then must also have $a<r_\Sigma$. Note that $1/r_\Sigma$ also bounds the principal curvatures, we have
$$|\kappa_j(p)|\leq 1/r_\Sigma,\quad j=1,\ldots,n-1,\; p\in\Sigma.$$
In the sequel we fix $a=r_\Sigma/2$.

For later use we note that the derivatives of $\Pi_\Sigma(x)$ and $d_\Sigma(x)$ are given by
$$\nabla d_\Sigma(x)= \nu_\Sigma(\Pi_\Sigma(x)),\quad D(u)\Pi_\Sigma(x) = M_0(d_\Sigma(x))(\Pi_\Sigma(x))P_\Sigma(\Pi_\Sigma(x)),$$
where $P_\Sigma(p)=I-\nu_\Sigma(p)\otimes\nu_\Sigma(p)$ denotes the orthogonal projection onto
the tangent space $T_p\Sigma$ of $\Sigma$ at $p\in\Sigma$, and $M_0(r)(p)=(I-r L_\Sigma(p))^{-1}$.
Note that $$|M_0(r)(p)|\leq 1/(1-r|L_\Sigma(p)|)\leq 2,\quad \mbox{ for all } p\in \Sigma,\;|r|\leq a.$$
All of these facts are discussed in more detail in \cite{PrSi12}.

Setting $\Gamma=\Gamma(t)$, we may use the map $\Lambda$ to parameterize the unknown free
boundary $\Gamma(t)$ over $\Sigma$ by means of a height function $h(t,p)$ via
$$\Gamma(t): p\mapsto p+ h(t,p)\nu_\Sigma(p),\quad p\in\Sigma,$$
for small $t\geq0$, at least.
Extend this diffeomorphism to all of $\bar{\Omega}$ by means of
$$ \Xi_h(t,x) = x +\chi(d_\Sigma(x)/a)h(t,\Pi_\Sigma(x))\nu_\Sigma(\Pi_\Sigma(x))=:x+\xi_h(t,x).$$
Here $\chi$ denotes a suitable cut-off function; more precisely, $\chi\in\cD(\R)$,
$0\leq\chi\leq 1$, $\chi(r)=1$ for $|r|<1/3$, and $\chi(r)=0$ for $|r|>2/3$.
 Note that $\Xi_h(t,x)=x$ for $|d_\Sigma(x)|>2a/3$, and
$$\Xi_h^{-1}(t,p)= p-h(t,p)\nu_\Sigma(p)\quad \mbox{ for }\; p\in \Sigma.$$
Now we define the transformed quantities
\begin{eqnarray*}&&\bar{u}(t,x)= u(t,\Xi_h(t,x)),\quad \bar{\theta}(t,x)=\theta(t,\Xi_h(t,x)),\\
&&  \bar{j}(t,x)=j(t,\Xi_h(t,x)),\quad\bar{\pi}(t,x)=\pi(t,\Xi_h(t,x)),\quad t>0,\; x\in\Omega\backslash\Sigma,
\end{eqnarray*}
the {\em pull backs} of $u$, $\theta$, $j$, and $\pi$. This way we have transformed the time varying regions
$\Omega\setminus\Gamma(t)$ to the fixed domain $\Omega\setminus\Sigma$.

This transformation gives the following problem for $\bar{u},\bar{\pi}, \bar{\theta}, \bar{j}, h$.
\begin{eqnarray}\label{ti2pp}
&&\rho\partial_t \bar{u} -\cG(h)\cdot \mu(\bar{\theta})(\cG(h)\bar{u}+[\cG(h)\bar{u}]^{\sf T}) +\cG(h)\bar{\pi}=
\cR_u(\bar{u},\bar{\theta},h) \quad \mbox{ in }\Omega\backslash\Sigma,
\nonumber\\
&&\cG(h)\cdot\bar{u}=0 \quad  \mbox{ in }\Omega\backslash\Sigma,\nonumber\\
&&\rho\kappa(\bar{\theta})\partial_t \bar{\theta}-\cG(h)\cdot d(\bar{\theta})\cG(h)\bar{\theta}=
\cR_\theta(\bar{u},\bar{\theta},h) \quad \mbox{ in }\Omega\backslash\Sigma,
\nonumber\\
&&\bar{u}=\partial_\nu \bar{\theta}=0\quad \mbox{ on }\partial\Omega,\nonumber\\
&&[\![1/\rho]\!]\bar{j}^2\nu_\Gamma(h)+[\![-\mu(\bar{\theta})(\cG(h)\bar{u}+[\cG(h)\bar{u}]^{\sf{T}})+\bar{\pi}]\!]\nu_\Gamma(h) =\sigma H_\Gamma(h) \nu_\Gamma(h)
\mbox{ on } \Sigma,\nonumber\\
&&P_\Gamma[\![\bar{u}]\!]=0,\quad\bar{j}=[\![u\cdot\nu_\Gamma]\!]/[\![1/\rho]\!],\quad [\![\bar{\theta}]\!]=0\quad \mbox{ on } \Sigma,\\
&&l(\bar{\theta}) \bar{j}+[\![d(\bar{\theta})\cG(h)\bar{\theta}\cdot\nu_\Gamma(h)]\!]=0,\quad
\mbox{ on }\Sigma, \nonumber\\
&&[\![\psi(\bar{\theta})]\!] + [\![1/2\rho^2]\!]\bar{j}^2-[\![-(\mu(\bar{\theta})/\rho)(\cG(h)\bar{u}+[\cG(h)\bar{u}]^{\sf{T}})-\bar{\pi}/\rho]\!]\nu_\Gamma(h)=0 \quad \mbox{ on } \Sigma,\nonumber\\
&&[\![\rho]\!]V_\Gamma - [\![\rho\bar{u}\cdot\nu_\Gamma]\!]=0,\quad
\mbox{ on }\Sigma, \nonumber\\
&&\bar{u}(0)=\bar{u}_0,\quad\bar{\theta}(0)=\bar{\theta}_0,\quad h(0)=h_0.\nonumber
\end{eqnarray}
Here $\cG(h)$ and $H_\Gamma(h)$ denote the transformed gradient and curvature, respectively. More precisely, we have
$$D\Xi_h = I + D\xi_h, \quad \quad [D\Xi_h]^{-1} = I - {[I + D\xi_h]}^{-1}D\xi_h=I-M_1(h)^{\sf T}.$$
A simple computation yields
\begin{align*}
D\xi_h(t,x)
&= \nu_\Sigma(\Pi_\Sigma(x))\otimes M_0(d_\Sigma(x))(\Pi_\Sigma(x))\nabla_\Sigma h(t,\Pi_\Sigma(x))\\
&-h(t,\Pi_\Sigma(x))L_\Sigma(\Pi_\Sigma(x))M_0(d_\Sigma(x))(\Pi_\Sigma(x))P_\Sigma(\Pi_\Sigma(x))
\end{align*}
for $|d_\Sigma(x)|<a/3$,
$D\xi_h(t,x)=0$ for $|d_\Sigma(x)|>2a/3$,
as well as
\begin{align*}
D\xi_h(t,x)&= \frac{1}{a}\chi^\prime(d_\Sigma(x)/a)h(t,\Pi_\Sigma(x))\nu_\Sigma(\Pi_\Sigma(x))\otimes\nu_\Sigma(\Pi_\Sigma(x))\\
& + \chi(d_\Sigma(x)/a)[\nu_\Sigma(\Pi_\Sigma(x))\otimes M_0(d_\Sigma(x))(\Pi_\Sigma(x))\nabla_\Sigma h(t,\Pi_\Sigma(x)) \\
&-h(t,\Pi_\Sigma(x))L_\Sigma(\Pi_\Sigma(x))M_0(d_\Sigma(x))(\Pi_\Sigma(x))P_\Sigma(\Pi_\Sigma(x))]\\
&\quad \mbox{ for }\; a/3<|d_\Sigma(x)|<2a/3,
\end{align*}
Therefore $[I + D\xi_h]$ is invertible, provided $h$ and $\nabla_\Sigma h$ are sufficiently small, more precisely
\begin{equation}
\label{hanzawainv}
|[I+D\xi_h]^{-1}|\leq 2, \; \mbox{for }{|h|}_\infty < \frac{1}{10}\min\{a/|\chi'|_\infty,1/|L_\Sigma|_\infty\},
\; {|\nabla_\Sigma h|}_\infty < \frac{1}{10}.
\end{equation}
With these properties we get
\begin{align*}
[\nabla\pi]\circ\Xi_h
&= \cG(h)\bar{\pi}\\
&=  ({[D\Xi_h]^{\sf- T}}\circ\Xi_h)\nabla\bar{\pi}
 =  \nabla\bar{\pi} - [D\xi_h]^{\sf{T}}{[I + D\xi_h]}^{-\sf{T}}\nabla\bar{\pi}\\
&=: (I - M_1(h))\nabla\bar{\pi}\\
[\nabla\theta]\circ\Xi_h&= (I - M_1(h))\nabla\bar{\theta}\\
{\rm div}\, u\circ\Xi_h
&= \cG(h)\cdot\bar{u}
=  (I - M_1(h))\nabla\cdot\bar{u}.
\end{align*}
Next we note that
\begin{align*}
\partial_t u\circ\Xi_h
&=  \partial_t \bar{u} -[\nabla u]\circ \Xi_h\cdot\partial_t\Xi_h
 =  \partial_t \bar{u} - ([D\Xi_h]^{-\sf{T}}\circ\Xi_h)\nabla \bar{u}\cdot\partial_t\Xi_h \\
&=  \partial_t \bar{u} -\nabla  \bar{u}\cdot[I +
D\xi_h]^{-1}\partial_t\xi_h
 =: \partial_t\bar{u} - (R(h)\cdot\nabla)\bar{u},
\end{align*}
with $R(h)= (I-M_1(h)^{\sf T})\partial_t\xi_h$.
This yields
$$\cR_u(\bar{u},\bar{\theta},h)=-\rho\bar{u}\cdot\cG(h)\bar{u}+\rho (R(h)\cdot\nabla)\bar{u},$$
and
\begin{align*}\cR_\theta(\bar{u},\bar{\theta},h)&=-\rho\kappa(\bar{\theta})\bar{u}\cdot\cG(h)\bar{\theta}
+\rho\kappa(\bar{\theta})(R(h)\cdot\nabla)\bar{\theta}\\
&+ \mu(\bar{\theta})(\cG(h)\bar{u} +[\cG(h)\bar{u}]^{\sf T})\cdot \cG(h)\bar{u}.
\end{align*}
With the Weingarten tensor $L_\Sigma$ and the surface gradient
 $\nabla_\Sigma$ we further have
\begin{eqnarray*}
\nu_\Gamma(h)= \beta(h)(\nu_\Sigma-\alpha(h)),&& \alpha(h)= M_0(h)\nabla_\Sigma h,\\
M_0(h)=(I-hL_\Sigma)^{-1},&& \beta(h) = (1+|\alpha(h)|^2)^{-1/2},
\end{eqnarray*}
and
$$V_\Gamma=(\partial_t\Xi|\nu_\Gamma) = \partial_t h (\nu_\Gamma|\nu_\Sigma)=
\beta(h)\partial_t h.$$

The curvature $H_\Gamma(h)$ becomes
$$ H_\Gamma(h) = \beta(h)\{ {\rm tr} [M_0(h)(L_\Sigma+\nabla_\Sigma \alpha(h))]
-\beta^2(h)(M_0(h)\alpha(h)|[\nabla_\Sigma\alpha(h)]\alpha(h))\},$$
a differential expression involving second order derivatives of $h$ only linearly.
Its linearization at $h=0$ is given by
$$H^\prime_\Gamma(0)= {\rm tr}\, L_\Sigma^2 +\Delta_\Sigma.$$
Here $\Delta_\Sigma$ denotes the Laplace-Beltrami operator on $\Sigma$.

It is convenient to decompose the stress boundary condition into tangential and normal parts.
 Multiplying the stress interface condition with $\nu_\Sigma/\beta$ we obtain
$$[\![1/\rho]\!]\bar{j}^2+[\![\bar{\pi}]\!]- \sigma H_\Gamma(h)
= ([\![\mu(\bar{\theta})(\cG(h)\bar{u}+[\cG(h)\bar{u}]^{\sf T})]\!](\nu_\Sigma-M_0(h)\nabla_\Sigma h)|\nu_\Sigma),$$
for the normal part of the stress boundary condition, and
\begin{align*}&-P_\Sigma[\![\mu(\bar{\theta})(\cG(h)\bar{u}+[\cG(h)\bar{u}]^{\sf T})]\!](\nu_\Sigma-M_0(h)\nabla_\Sigma h)\\
&\qquad= \Big([\![\mu(\bar{\theta})(\cG(h)\bar{u}+[\cG(h)\bar{u}]^{\sf T})]\!](\nu_\Sigma-M_0(h)\nabla_\Sigma h)|\nu_\Sigma\Big)M_0(h)\nabla_\Sigma h,\end{align*}
for the tangential part. Note that the latter neither contains the pressure jump nor the phase flux nor the curvature, which is the advantage of
this decomposition.

\section{Local Well-Posedness}
In this section we prove local well-posedness of problem \eqref{NS},\eqref{Heat},\eqref{GTS}. The proof is based on maximal $L_p$-regularity of the principal linearization and on the contraction mapping principle.

\subsection{Principal Linearization}

The principal part of the linearized problem reads as follows
\begin{align}\label{linNS}
\rho\partial_t u -\mu(x) \Delta u +\nabla \pi&=\rho f_u\quad \mbox{ in } \Omega\setminus\Sigma,\nn\\
{\rm div}\, u&=g_d\quad  \mbox{ in } \Omega\setminus\Sigma,\nn\\
P_\Sigma[\![u]\!]+c(t,x)\nabla_\Sigma h &=P_\Sigma g_u\quad   \mbox{ on } \Sigma,\\
-2[\![\mu(x) D(u) \nu_\Sigma]\!]+ [\![\pi]\!]\nu_\Sigma -\sigma \Delta_\Sigma h \nu_\Sigma&= g \quad \mbox{ on } \Sigma,\nn\\
u&=0\quad  \mbox{ on } \partial\Omega\nn\\
u(0)&=u_0\quad \mbox{ in } \Omega\setminus\Sigma.\nn
\end{align}
\begin{align}\label{linHeat}
\rho\kappa(x)\partial_t \theta -d(x)\Delta\theta&=\rho\kappa(x)f_\theta \quad \mbox{ in } \Omega\setminus\Sigma,\nn\\
 -[\![d(x)\partial_{\nu_\Sigma} \theta]\!]&=g_\theta  \quad \mbox{ on } \Sigma,\nn\\
[\![\theta]\!]&=0\quad \mbox{ on } \Sigma,\\
\partial_\nu \theta&=0\quad \mbox{ on } \partial\Omega\nn\\
\theta(0)&=\theta_0\quad \mbox{ in } \Omega.\nn
\end{align}
\begin{align}\label{linGTS}
[\![\rho]\!]\partial_t h-[\![\rho u\cdot\nu_\Sigma]\!]+b(t,x)\cdot\nabla_\Sigma h&= [\![\rho]\!]f_h\quad \mbox{ on } \Sigma,\nn\\
-2[\![(\mu(x)/\rho) D(u) \nu_\Sigma\cdot\nu_\Sigma]\!]+[\![\pi/\rho]\!]&= g_h\quad \mbox{ on } \Sigma,\\
h(0)&=h_0\quad \mbox{ on } \Sigma.\nn
\end{align}
Here  $\mu_{k},\kappa_{k},d_{k}$, $k=1,2$, are  functions of $x$, continuous on $\bar{\Omega}_k$, and $c,b$ depend on $t$ and $x$; recall that $[\![\rho]\!]\neq0$ by assumption. Apparently, \eqref{linHeat} decouples from the remaining problem. Since it is well-known that this problem has maximal $L_p$-regularity, we concentrate on the remaining one for $(u,\pi,h)$.
\begin{align}\label{linFB}
\rho\partial_t u -\mu(x) \Delta u +\nabla \pi&=\rho f_u\quad \mbox{ in } \Omega\setminus\Sigma,\nn\\
{\rm div}\, u&=g_d\quad  \mbox{ in } \Omega\setminus\Sigma,\nn\\
u&=0\quad \mbox{ on } \partial\Omega\nn\\
P_\Sigma[\![u]\!] +c(t,x)\nabla_\Sigma h &=P_\Sigma g_u\quad \mbox{ on } \Sigma ,\\
-2[\![\mu(x) D(u) \nu_\Sigma]\!]+ [\![\pi]\!]\nu_\Sigma -\sigma\Delta_\Sigma h\nu_\Sigma&= g \quad \mbox{ on } \Sigma,\nn\\
-2[\![\mu(x) D(u) \nu_\Sigma\cdot\nu_\Sigma/\rho]\!]+ [\![\pi/\rho]\!] &= g_h \quad\mbox{ on } \Sigma,\nn\\
[\![\rho]\!]\partial_t h-[\![\rho u\cdot\nu_\Sigma]\!]+b(t,x)\cdot\nabla_\Sigma h&= [\![\rho]\!]f_h\quad\mbox{ on } \Sigma,\nn\\
u(0)&=u_0\quad \mbox{ in } \Omega\setminus\Sigma,\nn\\
h(0)&=h_0\quad  \mbox{ on } \Sigma.\nn
\end{align}
For this problem we have maximal regularity result in the $L_p$-setting.
\begin{theorem}
\label{th:3.1}
Let $p>n+2$ be fixed, $\rho_j>0$, $\rho_2\neq\rho_1$, $\mu_j\in C(\bar{\Omega}_j)$, $\mu_j>0$, $j=1,2$,  $(b,c)\in [W^{1-1/2p}_p(J;L_p(\Sigma))\cap L_p(J;W^{2-1/p}_p(\Sigma))]^{n+1}$, where $J=[0,a]$.

Then the  Stokes problem with free boundary \eqref{linFB} admits a unique solution $(u,\pi,h)$ with regularity
\begin{equation}
\label{reg}
\begin{split}
&u\in H^1_p(J;L_p(\Omega))^n
  \cap L_p(J;H^2_p(\Omega\setminus\Sigma))^n, \\
& [\![u\cdot\nu_\Sigma]\!]\in H^1_p(J;\dot{W}^{-1/p}_p(\Sigma)),\quad \pi\in L_p(J;\dot{H}^1_p(\Omega\setminus\Sigma)), \\
&\pi_j:=\pi_{|_{\partial\Omega_j\cap\Sigma}}\in W^{1/2-1/2p}_p(J;L_p(\Sigma))\cap L_p(J;W^{1-1/p}_p(\Sigma)),\; j=1,2,\\
& h\in W^{2-1/2p}_p(J;L_p(\Sigma))\cap H^1_p(J;W^{2-1/p}_p(\Sigma))
\cap L_p(J;W^{3-1/p}_p(\Sigma))
\end{split}
\end{equation}
if and only if the data
$f_u,g_d,g,P_\Sigma g_u,g_h,f_h,u_0,h_0$
satisfy the following regularity and compatibility conditions:
\begin{itemize}
\item[(a)]
$f_u\in L_p(J;L_p(\Omega,\R^{n+1}))$,
\vspace{1mm}
\item[(b)]
$g_d\in H^1_p(J; \dot{H}^{-1}_p(\Omega))\cap L_p(J; H^1_p(\Omega\setminus\Sigma))$,
\vspace{1mm}
\item[(c)]
$(g,g_h)\in W^{1/2-1/2p}_p(J;L_p(\Sigma,\R^{n+1}))
\cap L_p(J;W^{1-1/p}_p(\Sigma,\R^{n+1}))$,
\vspace{1mm}
\item[(d)]
$(P_\Sigma g_u,f_h)\in W^{1-1/2p}_p(J;L_p(\Sigma,\R^{n+1}))\cap L_p(J;W^{2-1/p}_p(\Sigma,\R^{n+1}))$,
\vspace{1mm}
\item[(e)]
$u_0\in W^{2-2/p}_p(\Omega\setminus\Sigma,\R^{n})$, $h_0\in W^{3-2/p}_p(\Sigma)$,
\vspace{1mm}
\item[(f)]
${\rm div}\, u_0=g_d(0)$ in $\,\Omega\setminus\Sigma$,
\vspace{1mm}
\item[(g)]
$P_\Sigma[\![u_0]\!]+ c(0,\cdot)\nabla_\Sigma h_0=P_\Sigma g_u(0)$ on $\Sigma$,
\vspace{1mm}
\item[(h)]
$-P_\Sigma[\![\mu_0(\cdot)(\nabla u_0+[\nabla u_0]^{\sf T})]\!] =P_\Sigma g(0)$ on
$\,\Sigma$.
\end{itemize}
The solution map $[(f_u,g_d,g,P_\Sigma g_u,g_h,f_h, u_0,h_0)\mapsto (u,\pi,h)]$ is continuous between the corresponding spaces.
\end{theorem}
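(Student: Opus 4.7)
The plan is to prove Theorem~\ref{th:3.1} by reducing \eqref{linFB} to the half-space, flat-interface case of \cite{PrSh12} via localization and perturbation. The necessity of (a)--(h) will follow directly from trace theory in anisotropic $L_p$-spaces: any $(u,\pi,h)$ in the class \eqref{reg} admits the required traces, and (f)--(h) merely express the algebraic content of those lines of \eqref{linFB} that carry no time derivative at $t=0$. The real work is sufficiency, and a first reduction is to zero initial data: I would subtract auxiliary functions obtained from standard maximal regularity for the parabolic problem on $\Omega\setminus\Sigma$ with no-slip data, and from an extension of $h_0\in W^{3-2/p}_p(\Sigma)$ into the anisotropic class in \eqref{reg}, after which all data vanish at $t=0$ in the sense required by the compatibility conditions.

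Next I would localize. Cover $\bar\Omega$ by finitely many open charts $U_k$, each contained in some $\Omega_j$, or meeting $\partial\Omega$ but not $\Sigma$, or meeting $\Sigma$ but not $\partial\Omega$. In the first two cases \eqref{linFB} localizes to an interior or Dirichlet Stokes problem for which maximal $L_p$-regularity is classical. In the third case a $C^3$-diffeomorphism flattens $\Sigma\cap U_k$ and transforms the principal part of \eqref{linFB} into the constant-coefficient asymmetric Stokes problem with flat interface for which \cite{PrSh12} provides maximal $L_p$-regularity with precisely the functional framework \eqref{reg}. The errors arising from (i) lower-order derivatives of $h$ and $u$, (ii) commutators with the chart and cut-off functions, (iii) freezing $\mu(x)$ at a reference point in each chart, and (iv) the lower-order terms $c\nabla_\Sigma h$ and $b\cdot\nabla_\Sigma h$, will be treated as perturbations: for $p>n+2$ the assumed regularity of $(b,c)$ makes them pointwise multipliers on the trace spaces of $h$ and $[\![u]\!]$, and shrinking the charts together with restriction to a short interval $J'=[0,\tau]\subset J$ makes the perturbation small enough for a Neumann-series argument. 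I would then glue the local solutions via a subordinate partition of unity, absorbing the resulting commutators in the same way, and iterate at most $\lceil a/\tau\rceil$ times to cover $J$; uniqueness and continuity of the solution map follow from the a priori estimate built into the construction.

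The main obstacle will be the nonstandard symbol structure of this asymmetric Stokes problem. Because $\rho_1\neq\rho_2$, the evolution law $[\![\rho]\!]\partial_t h = [\![\rho u\cdot\nu_\Sigma]\!]+\ldots$ couples $h$ dynamically to the normal trace of $u$ from both sides, while the third interface condition imposes $[\![\pi/\rho]\!]$ rather than $[\![\pi]\!]$; together these give the linearization a parabolic/hyperbolic mixed character different both from the one-phase free-boundary Stokes problem and from the temperature-dominated case $\rho_1=\rho_2$ treated in \cite{PSSS12}. Resolving this symbol is exactly what \cite{PrSh12} accomplishes on the half-space by explicit Fourier--Laplace and operator-sum arguments, and the task of the present proof is only to transfer that half-space result to general geometry via the localization and perturbation scheme above.
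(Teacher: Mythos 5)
Your overall architecture (necessity by trace theory, reduction to vanishing initial data, localization to the flat model of \cite{PrSh12}, perturbation, gluing) is the right skeleton and is indeed what the paper does, but two of your "perturbation" steps fail, and the first one is exactly where the new content of the proof lies. The terms $c\nabla_\Sigma h$ and $b\cdot\nabla_\Sigma h$ in \eqref{linFB} are \emph{not} lower order: from $h\in L_p(J;W^{3-1/p}_p(\Sigma))$ one only gets $\nabla_\Sigma h\in L_p(J;W^{2-1/p}_p(\Sigma))$, which is precisely the spatial regularity of the data classes for $P_\Sigma g_u$ and $f_h$ in (d), with nothing to spare; and in the application $c(0,\cdot)=-[\![u_0\cdot\nu_\Sigma]\!]$, $b(0,\cdot)=[\![\rho u_0]\!]$ are not small, so neither shortening the time interval nor shrinking the charts produces the smallness a Neumann series needs (boundedness as multipliers is not the issue, smallness is). These terms must therefore be kept in the model problem: the flat-interface problem is \eqref{7.1.1}, whose third and seventh equations differ from the system (3.1), (3.3) of \cite{PrSh12}, and one has to redo the Fourier--Laplace analysis to obtain the modified boundary symbol \eqref{7.1.4}, $s(\lambda,\tau)=\lambda+\sigma\tau m(z)/[\![\rho]\!]^2+c_0\tau\ell(z)/[\![\rho]\!]+i\tau(b_0\cdot\xi/|\xi|)/[\![\rho]\!]$, prove that $\ell$ is holomorphic with $|\ell(z)|\le L/(1+|z|)$, and deduce the lower bound $|s(\lambda,\tau)|\ge c_\eta(|\lambda|+|\tau|)$ for $\lambda\in\Sigma_{\frac\pi2+\eta}$, $|\lambda|\ge\lambda_0$; this is the content of Subsection 7.1 and the Appendix. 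Simply citing the maximal regularity theorem of \cite{PrSh12} for the localized model, as you propose, does not cover the problem actually obtained after freezing coefficients.

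The second failing step is the absorption of the cut-off commutators. The terms $(\nabla\varphi_k)\pi$ and $(\nabla\varphi_k|u)$ have no extra time regularity ($\pi$ is only $L_p$ in time), so restriction to a short interval gives no smallness, and shrinking the charts makes $\nabla\varphi_k$ larger, not smaller. The paper removes them by solving the auxiliary elliptic transmission problem \eqref{7.3.2} for $(\partial_t\phi_k,\psi_k)$, with the asymmetric jump conditions matching $[\![\pi]\!]$ and $[\![\pi/\rho]\!]$, and subtracting $\nabla\phi_k$, $\nabla\psi_k$; the corrected pressure then gains time regularity ($\tilde\pi_k\in{}_0H^\alpha_p(J;L_p(\Omega))$ for $\alpha<1/2-1/2p$), after which the $a^\gamma$-smallness argument closes. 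Note, finally, that this localization scheme only yields injectivity and closed range of $L$ (a semi-Fredholm property); existence is not obtained by gluing local solutions, but by a two-parameter continuation (replacing the $h$-equation by $\partial_t h+\alpha(-\Delta_\Sigma)^{1/2}h-(1-\alpha)(\ldots)=f_h$ and then decoupling the two phases in $\beta$), which reduces to one-phase Stokes problems solvable by \cite{BKP12}. Your proposal would need either this continuation argument or a genuine substitute for surjectivity; the necessity part and the reduction to zero initial data are fine.
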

The proof of this result is given in the Section 7.

\subsection{ Local Existence}

The basic result for local well-posedness of Problem (\ref{NS}), (\ref{Heat}), (\ref{GTS})
in an $L_p$-setting is the following theorem.

\bigskip

\begin{theorem} \label{wellposed} Let $p>n+2$,  $\Omega\subset\R^n$  a bounded domain with boundary $\partial\Omega\in C^{3-}$, $\sigma,\rho_1,\rho_2>0$, $\rho_1\neq\rho_2$, and suppose $\psi_j\in C^3(0,\infty)$, $\mu_j,d_j\in C^2(0,\infty)$ are such that
$$\kappa_j(s)=-s\psi_j^{\prime\prime}(s)>0,\quad \mu_j(s)>0,\quad  d_j(s)>0,\quad s\in(0,\infty),\; j=1,2.$$
Assume the {\bf regularity conditions}
$$ (u_0,\theta_0)\in [W^{2-2/p}_p(\Omega\setminus\Gamma_0)]^{n+1},\quad \Gamma_0\in W^{3-2/p}_p,$$
the {\bf  compatibility conditions}
\begin{align*}
&{\rm div}\, u_0=0 \quad \mbox{ in } \Omega\setminus\Gamma_0,\quad
 u_0=\partial_\nu \theta_0=0 \quad \mbox{ on } \partial\Omega,\\
&P_{\Gamma_0}[\![u_0]\!]=P_{\Gamma_0}[\![\mu(\theta_0)(\nabla u_0+[\nabla u_0]^{\sf T})\nu_{\Gamma_0}]\!]=[\![\theta_0]\!]=0\quad \mbox{ on } \Gamma_0,\\
&l(\theta_0)[\![u_0\cdot\nu_{\Gamma_0}]\!]/[\![1/\rho]\!]+ [\![d(\theta_0)\partial_{\nu_{\Gamma_0}} u_0]\!]=0 \quad \mbox{ on } \Gamma_0,
\end{align*}
and the {\bf well-posedness condition} $\theta_0>0$ on $\bar\Omega$.

Then there exists a unique $L_p$-solution of Problem (\ref{NS}), (\ref{Heat}), (\ref{GTS})  on
some possibly small but nontrivial time interval $J=[0,\tau]$.
\end{theorem}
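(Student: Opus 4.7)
The plan is to reduce the free-boundary problem to a quasilinear system on the fixed domain $\Omega\setminus\Sigma$ via the direct mapping approach of Section~2, and then solve it by a contraction-mapping argument based on the maximal $L_p$-regularity of \thmref{th:3.1}, together with the classical maximal $L_p$-regularity for the decoupled heat equation \eqref{linHeat}. First, approximate $\Gamma_0$ by a real-analytic reference surface $\Sigma$ whose second-order bundle is so close to that of $\Gamma_0$ in Hausdorff distance that $\Gamma_0$ can be written as a graph over $\Sigma$ with initial height $h_0\in W^{3-2/p}_p(\Sigma)$ obeying the smallness condition \eqref{hanzawainv}. The transformation $\Xi_h$ then turns \eqref{NS}--\eqref{GTS} into \eqref{ti2pp} on the fixed domain $\Omega\setminus\Sigma$; the coefficients $\mu(\bar\theta),\,d(\bar\theta),\,\kappa(\bar\theta)$ stay strictly positive on a small neighborhood of $\theta_0$ by continuity and the well-posedness assumption $\theta_0>0$.

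Next, with $z:=(\bar u,\bar\pi,\bar\theta,h)$, I rewrite \eqref{ti2pp} as $Lz=N(z)$, where $L$ is the principal linearization obtained by freezing the coefficients at $t=0$ via $\mu(x)=\mu(\theta_0(x))$, $d(x)=d(\theta_0(x))$, $\kappa(x)=\kappa(\theta_0(x))$, and $N$ collects all the quasilinear perturbations: the remainders $\cR_u,\cR_\theta$, the curvature expansion $\sigma(H_\Gamma(h)-{\rm tr}\,L_\Sigma^2\, h-\Delta_\Sigma h)$, the operator differences $(\cG(h)-\nabla)$, the quadratic interface terms $[\![1/\rho]\!]\bar j^{\,2}\nu_\Gamma(h)$ and $[\![\psi(\bar\theta)]\!]+[\![1/2\rho^2]\!]\bar j^{\,2}$ from the Gibbs--Thomson relation, the tangential stress balance, and coefficient perturbations such as $(\mu(\bar\theta)-\mu(\theta_0))\Delta\bar u$. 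The functional-analytic framework is the space
\[
\EE(\tau)=\EE_u(\tau)\times\EE_\pi(\tau)\times\EE_\theta(\tau)\times\EE_h(\tau),
\]
where $(\bar u,\bar\pi,h)$ carry the regularity \eqref{reg} prescribed by \thmref{th:3.1} and $\bar\theta\in H^1_p(J;L_p(\Omega))\cap L_p(J;H^2_p(\Omega\setminus\Sigma))$, with $J=[0,\tau]$.

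I then construct a reference function $z_*\in\EE(\tau)$ which realizes the prescribed initial data and satisfies the compatibility conditions (f)--(h) of \thmref{th:3.1} at $t=0$; this is exactly where the stated regularity and compatibility assumptions on $(u_0,\theta_0,\Gamma_0)$ are used. Writing $z=z_*+w$ with $w(0)=0$ reduces the problem to the fixed-point equation
\[
w=L^{-1}\bigl(N(z_*+w)-Lz_*\bigr)
\]
on the closed subspace ${}_0\EE(\tau):=\{w\in\EE(\tau):w(0)=0\}$, on which $L^{-1}$ is bounded with a norm uniform in small $\tau$ by \thmref{th:3.1}. Since $p>n+2$, the embedding $\EE(\tau)\hookrightarrow C([0,\tau];C^1(\bar\Omega\setminus\Sigma))\times C([0,\tau];C^2(\Sigma))$ makes all Nemytskii-type operators in $N$ of class $C^1$ into $\FF(\tau)$ with $DN(z_*(0))$ small, and the components of $N(z_*+w)-Lz_*$ that do not vanish at $t=0$ pick up a factor $\tau^\alpha$ from the Hölder embedding of the time trace. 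Hence the right-hand side is a self-map and a contraction on a sufficiently small ball of ${}_0\EE(\tau)$ once $\tau$ is chosen small enough, and Banach's fixed-point theorem delivers the unique local $L_p$-solution.

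The main obstacle is the bookkeeping required to show that every nonlinear term in $N$---most prominently the curvature operator $H_\Gamma(h)$, the transformed Stokes nonlinearities through $\cG(h)$ and $M_0(h)$, the jump $[\![\mu(\bar\theta)(\cG(h)\bar u+[\cG(h)\bar u]^{\sf T})]\!]\nu_\Gamma(h)$, and the Stefan current $\bar j=[\![\bar u\cdot\nu_\Gamma(h)]\!]/[\![1/\rho]\!]$ entering quadratically---maps into the correct anisotropic Sobolev component of $\FF(\tau)$ dictated by \thmref{th:3.1}, with a Lipschitz constant that vanishes as $\tau\to 0$. This needs a careful combination of product rules in the spaces $W^{s-1/2p}_p(J;L_p(\Sigma))\cap L_p(J;W^{s-1/p}_p(\Sigma))$ and the pointwise multiplier property afforded by $p>n+2$; once these mapping properties are in place the contraction step is routine.
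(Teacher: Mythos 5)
Your overall strategy (Hanzawa transform to the fixed reference surface $\Sigma$, maximal $L_p$-regularity of the linearization from \thmref{th:3.1}, reduction to zero initial data via a reference function $z_*$, contraction in ${}_0\EE(\tau)$) is the same as the paper's. But there is a genuine gap in how you split the problem into $L$ and $N$. You take for $L$ only the principal part with coefficients frozen at $\theta_0$ ($\mu(\theta_0)$, $d(\theta_0)$, $\kappa(\theta_0)$) and relegate the transformed interface terms — in particular the tangential jump condition and the kinematic equation with the Stefan current $\bar j=[\![\bar u\cdot\nu_\Gamma(h)]\!]/[\![1/\rho]\!]$ — entirely to $N$. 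In the non-equal-density case this cannot work as stated: linearizing $[\![u\cdot\nu_\Gamma(h)]\!]$ and $[\![\rho\, u\cdot\nu_\Gamma(h)]\!]$ in $h$ around the initial state produces the terms $[\![u_0\cdot\nu_\Sigma]\!]\nabla_\Sigma h$ in the jump condition and $[\![\rho u_0]\!]\cdot\nabla_\Sigma h$ in the evolution equation for $h$. These are of the same (highest) order as the data classes $\FF_3$, $\FF_8$, their coefficients are of size $|u_0|$ (the normal jump of $u_0$ is generically nonzero precisely because $\rho_1\neq\rho_2$), and the multiplication operators $h\mapsto [\![u_0\cdot\nu_\Sigma]\!]\nabla_\Sigma h$ do not become small in $\cB({}_0\EE_h(\tau),\FF(\tau))$ as $\tau\to0$, nor do they gain a factor $\tau^\alpha$ (your claim that components not vanishing at $t=0$ pick up such a factor is false for the fractional-in-time data norms). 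Hence the Fr\'echet derivative of your $N$ at the reference solution is not small, and the contraction does not close.

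This is exactly why the paper's principal linearization \eqref{linFB} carries the variable-coefficient terms $c(t,x)\nabla_\Sigma h$ and $b(t,x)\cdot\nabla_\Sigma h$, with $c(t,x)=-e^{\Delta_\Sigma t}[\![u_0\cdot\nu_\Sigma]\!]$ and $b(t,x)=e^{\Delta_\Sigma t}[\![\rho u_0]\!]$ chosen so that only the \emph{differences} $\big([\![u\cdot\nu_\Sigma]\!]M_0(h)-e^{\Delta_\Sigma t}[\![u_0\cdot\nu_\Sigma]\!]\big)\nabla_\Sigma h$ and $\big(b(t,x)-[\![\rho M_0(h)u]\!]\big)\nabla_\Sigma h$ remain in the nonlinearity; these differences vanish at $t=0$ and at the reference state, which is what makes Proposition \ref{pr:7.1} (smallness of $N'$ on small balls and small time intervals) true. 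You do cite \thmref{th:3.1}, which indeed contains $b$ and $c$, but then you do not use this feature: with your splitting the linear solver is invoked for a problem without these terms while the dangerous contributions sit in $N$. To repair the argument you must either adopt the paper's splitting (put $c\nabla_\Sigma h$, $b\cdot\nabla_\Sigma h$ into $L$, which is precisely the non-standard asymmetric Stokes problem whose symbol analysis in Section 7 accounts for the $\ell(z)$ and $b_0$ contributions), or give a separate argument showing these first-order-in-$h$ terms can be absorbed, which the naive smallness/$\tau^\alpha$ reasoning does not provide.
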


\bigskip

\noindent
Here the notation $\Gamma_0\in W^{3-2/p}_p$ means that $\Gamma_0$ is a $C^2$-manifold, such that
its (outer) normal field $\nu_{\Gamma_0}$ is of class $W^{2-2/p}_p(\Gamma_0)$. Therefore the curvature tensor
 $L_{\Gamma_0}=-\nabla_{\Gamma_0}\nu_{\Gamma_0}$ of $\Gamma_0$ belongs to $W^{1-2/p}_p(\Gamma_0)$ which embeds into
$C^{\alpha+1/p}(\Gamma_0)$, with $\alpha=1-(n+2)/p>0$ since $p>n+2$ by assumption.
For the same reason we also have $u_0\in C^{1+\alpha}(\bar{\Omega}_j(0)))^n$, and $\theta_0\in C^{1+\alpha}(\bar{\Omega}_j(0)))$, $j=1,2$,
and $V_0\in C^{1+\alpha}(\Gamma_0)$.
The notion $L_p$-solution means that $(u,\pi,\theta,\Gamma)$ is obtained as the push-forward of an $L_p$-solution $(\bar{u},\bar{\pi},
\bar{\theta},h)$ of the transformed problem \eqref{ti2pp}, which means that $(\bar{u},\bar{\theta},h)$ belongs to
$ \EE(J)=\EE_{u,\theta}(J)\times\EE_{h}(J)$ defined by
$$\EE_{u,\theta}(J)=\{(u,\theta)\in[H^1_{p}(J;L_p(\Omega))\cap L_p(J;H^2_p(\Omega\setminus\Sigma))]^{n+1}:\, {\rm div}\, u =0\},$$
and
$$\EE_{h}(J):= W^{2-1/2p}_{p}(J;L_p(\Sigma))\cap H^{1}_{p}(J;W^{2-1/p}_p(\Sigma))\cap L_{p}(J;W^{3-1/p}_p(\Sigma)).$$
The regularity of the pressure is obtained from the equations.

\bigskip

\subsection{Time-Weights}\label{subsec:timeweights}

For later use we need an extension of the local existence results to spaces with time weights. For this purpose, given a UMD-Banach space $Y$
and $\mu\in(1/p,1]$, we define for $J=(0,t_0)$
$$K^s_{p,\mu}(J;Y):=\{u\in L_{p,loc}(J;Y): \; t^{1-\mu}u\in K^s_p(J;Y)\},$$
where $s\geq0$ and $K\in{H,W}$. It has been shown in \cite{PrSi04} that the operator $d/dt$ in $L_{p,\mu}(J;Y)$ with domain
$$D(d/dt)={_0H}^1_{p,\mu}(J;Y)=\{u\in H^1_{p,\mu}(J;Y):\; u(0)=0\}$$
is sectorial and admits an $H^\infty$-calculus with angle $\pi/2$. This is the main tool to extend Theorem \ref{wellposed} to the time weighted setting, where the solution space $\EE(J)$ is replaced by
$$ \EE_\mu(J)=\EE_{\mu,u,\theta}\times\EE_{\mu,h}(J),$$
with
$$\EE_{\mu,u,\theta}(J)=\{(u,\theta)\in[H^1_{p,\mu}(J;L_p(\Omega))\cap L_{p,\mu}(J;H^2_p(\Omega\setminus\Sigma))]^{n+1}:\, {\rm div}\, u=0\},$$
and
$$\EE_{\mu,h}(J):= W^{2-1/2p}_{p,\mu}(J;L_p(\Sigma))\cap H^{1}_{p,\mu}(J;W^{2-1/p}_p(\Sigma))\cap L_{p,\mu}(J;W^{3-1/p}_p(\Sigma)).$$

The trace spaces for $u$, $\theta$  and $h$ for $p>3$ are then given by
\begin{align}\label{tracesp-mu}
&(u_0,\theta_0)\in [W^{2\mu-2/p}_p(\Omega\setminus\Sigma)]^{n+1},\quad  h_0\in W^{2+\mu-2/p}_p(\Sigma),\nonumber\\
& h_1:=\partial_th_{|_{t=0}}\in W^{2\mu-3/p}_p(\Sigma),
\end{align}
where for the last trace  we need in addition $\mu>3/2p$. Note that the embeddings
$$\EE_{\mu,u,\theta}(J)\hookrightarrow  C(J;C^1(\bar{\Omega}_j))^{n+1},\quad\EE_{\mu,h}(J)\hookrightarrow C(J;C^{2+\alpha}(\Sigma))\cap C^1(J; C^1(\Sigma))$$
with $\alpha=1/2-n/p>0$ require
$\mu>1/2+(n+2)/2p$, which is feasible since $p>n+2$ by assumption.  This restriction is needed for the estimation of the nonlinearities.

For these time weighted spaces we have the following result.

\begin{corollary} \label{wellposed3} Let $p>n+2$, $\mu\in (1/2+(n+2)/2p,1]$, $\sigma,\rho_1,\rho_2>0$, $\rho_1\neq\rho_2$, and suppose $\psi\in C^3(0,\infty)$, $\mu,d\in C^2(0,\infty)$ are such that
$$\kappa_j(s)=-s\psi_j^{\prime\prime}(s)>0,\quad \mu_j(s)>0,\quad d_j(s)>0,\quad s\in(0,\infty),\;j=1,2.$$
Assume the {\bf regularity conditions}
$$(u_0,\theta_0)\in [W^{2\mu-2/p}_p(\Omega\setminus\Gamma_0)]^{n+1},\quad \Gamma_0\in W^{2+\mu-2/p}_p,$$
 the {\bf compatibility conditions}
\begin{align*}
&{\rm div}\,u_0=0 \; \mbox{ in } \Omega\setminus\Sigma,\quad u_0=\partial_\nu\theta_0 =0 \; \mbox{ on } \partial\Omega,\\
&P_{\Gamma_0}[\![u_0]\!]=P_{\Gamma_0}[\![\mu(\theta_0)(\nabla u_0+[\nabla u_0]^{\sf T})\nu_{\Gamma_0}]\!] =[\![\theta_0]\!]=0 \quad \mbox{ on } \Gamma_0,\\
&l(\theta_0)[\![u_0\cdot\nu_{\Gamma_0}]\!]/[\![1/\rho]\!]+ [\![d(\theta_0)\partial_{\nu_{\Gamma_0}} u_0]\!]=0 \quad \mbox{ on } \Gamma_0,
\end{align*}
as well as  the {\bf well-posedness condition} $\theta_0>0$ on $\bar\Omega$.

 Then the transformed problem \eqref{ti2pp} admits a unique solution $z=(u,\theta,h)\in \EE_\mu(0,\tau)$ for some nontrivial time interval $J=[0,\tau]$. The solution depends continuously on the data. For each $\delta>0$ the solution belongs to $\EE(\delta,\tau)$, i.e.\ regularizes
instantly.
\end{corollary}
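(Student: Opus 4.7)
The plan is to lift the entire local existence scheme from the unweighted setting of Theorem~\ref{wellposed} to the time-weighted scale $\EE_\mu$. First, I would prove the weighted analogue of Theorem~\ref{th:3.1}: problem \eqref{linFB} has maximal regularity in $\EE_\mu(J)$ with data in the natural $K^s_{p,\mu}$-versions of the spaces in (a)--(h) and with trace spaces as listed in \eqref{tracesp-mu}. The decisive input is the fact recalled in Subsection~\ref{subsec:timeweights}: on $L_{p,\mu}(J;Y)$ with domain ${}_0H^1_{p,\mu}$, the operator $d/dt$ is sectorial with bounded $H^\infty$-calculus of angle $\pi/2$, by \cite{PrSi04}. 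Plugging this into the Dore--Venni and operator-sum machinery that drives the proof of Theorem~\ref{th:3.1} in Section~7 yields maximal $L_{p,\mu}$-regularity for the non-standard asymmetric Stokes--height system without any change of the structural argument; the compatibility conditions (f)--(h) transfer unchanged, while (g) and (d) pick up the expected index shift in time regularity.

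Second, I would run a fixed-point argument in a small ball of $\EE_\mu(0,\tau)$ around a suitable extension of the initial data, in parallel with the proof of Theorem~\ref{wellposed}. The remainder terms $\cR_u$, $\cR_\theta$ in \eqref{ti2pp}, together with the curvature, Weingarten and pressure-jump contributions, have to be estimated in the weighted data spaces; the restriction $\mu>1/2+(n+2)/2p$ is precisely what secures the embeddings $\EE_{\mu,u,\theta}(J)\hookrightarrow C(J;C^1(\bar\Omega_j))^{n+1}$ and $\EE_{\mu,h}(J)\hookrightarrow C(J;C^{2+\alpha}(\Sigma))\cap C^1(J;C^1(\Sigma))$ used to control the quasilinear coefficients uniformly in $t$, while the time weight $t^{1-\mu}$ absorbs the loss in the $t$-integration. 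Smallness of the Lipschitz constants on trajectories with the prescribed initial value is obtained by taking $\tau\to 0^+$, so the contraction closes in a sufficiently small ball, and continuous dependence on the data is an automatic consequence.

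Third, the instant regularization statement follows by a restart argument. For any $\delta\in(0,\tau)$ the weight $t^{1-\mu}$ is bounded above and below on $[\delta,\tau]$, so $\EE_\mu(\delta,\tau)$ coincides with $\EE(\delta,\tau)$ as sets with equivalent norms; in particular the trace $(u(\delta),\theta(\delta),h(\delta))$ lies in the unweighted trace space $W^{2-2/p}_p(\Omega\setminus\Gamma(\delta))^{n+1}\times W^{3-2/p}_p(\Sigma)$. The compatibility conditions at $t=\delta$ are inherited from the equations, and applying Theorem~\ref{wellposed} with initial time $\delta$, together with uniqueness, identifies the solution on $[\delta,\tau]$ with an element of $\EE(\delta,\tau)$.

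I expect the main technical obstacle to be the bookkeeping around the weighted trace and compatibility spaces matching (a)--(h) of Theorem~\ref{th:3.1} after the index shift by $\mu-1$, and the verification that the quasilinear nonlinearities produced by the direct mapping approach remain locally Lipschitz between the weighted data and solution spaces, in particular for the boundary data $g$, $g_h$, $P_\Sigma g_u$ living in mixed-derivative trace spaces of the form $W^{s-1/2p}_{p,\mu}(J;L_p(\Sigma))\cap L_{p,\mu}(J;W^{2s-1/p}_p(\Sigma))$, which must be handled term by term.
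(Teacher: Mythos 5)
Your proposal follows essentially the same route as the paper: the paper proves the corollary by running the identical contraction-mapping scheme of Theorem~\ref{wellposed} in the weighted spaces $\EE_\mu$, with the $H^\infty$-calculus of $d/dt$ in $L_{p,\mu}$ from \cite{PrSi04} as the tool for transferring the maximal regularity of Theorem~\ref{th:3.1}, and with $\mu>1/2+(n+2)/2p$ guaranteeing the embeddings needed for the nonlinear estimates. Your restart argument for instant regularization is fine (and consistent with \cite{PrWi09}), though it is even more immediate: since the weight $t^{1-\mu}$ is bounded above and below on $[\delta,\tau]$, membership of the solution in $\EE_\mu(0,\tau)$ already yields membership in $\EE(\delta,\tau)$ by the very definition of the weighted spaces.
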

\bigskip

\subsection{Proofs}

The proof of Theorem \ref{wellposed} follows similar ideas as in the papers \cite{KPW10} for the two-phase Navier-Stokes problem without phase transition, \cite{PSSS12} for
the problem with phase transitions and equal densities and \cite{PrSh12} for the case of non-equal densities and nearly flat interface. Therefore we refrain from presenting all details but instead concentrate on the main ideas; note that the spaces are the same as in \cite{PrSh12}.

The transformed problem is rewritten
in quasilinear form, dropping the bars and collecting its principal linear
part on the left hand side. We set  $\mu_0(x)=\mu(\theta_0(x))$, $\kappa_0(x)=\kappa(\theta_0(x))$, $d_0(x)=d(\theta_0(x))$,
$c(t,x)=-e^{\Delta_\Sigma t}[\![u_0\cdot\nu_\Sigma]\!]$, and $b(t,x)=e^{\Delta_\Sigma t}[\![\rho u_0]\!]$.
Then, eliminating $j$, the problems reads as follows
\begin{equation}
\label{quasi1}
\begin{aligned}
\rho\partial_t u-\mu_0(x) \Delta u +\nabla\pi
&=F_u(u,\pi,\theta,h) &&\text{in}&& \Omega\setminus\Sigma,\\
{\rm div}\, u
&=G_d(u,h)&&\text{in}&& \Omega\setminus\Sigma,\\
\rho\kappa_0(x)\partial_t\theta -d_0(x)\Delta \theta
&=F_\theta(u,\theta,h) &&\text{on}&&\Omega\setminus\Sigma,\\
u=\partial_\nu\theta &=0 &&\text{on}&&\partial\Omega,\\
u(0)=u_0,\quad\theta(0)&=\theta_0 && \text{in}&& \Omega\setminus\Sigma,
\end{aligned}
\end{equation}
\begin{equation}\label{quasi2}
\begin{aligned}
P_\Sigma[\![u]\!] + c(t,x)\nabla_\Sigma h &= G_j(u,\theta,h) &&\text{on}&& \Sigma,\\
-P_\Sigma[\![\mu_0(x)(\nabla u+[\nabla u]^{\sf T}) \nu_\Sigma]\!]
&= G_u^{tan}(u,\theta,h) &&\text{on}&&\Sigma,\\
-2[\![\mu_0(x)\nabla u]\!] \nu_\Sigma\cdot \nu_\Sigma
+[\![\pi]\!]- \sigma \Delta_\Sigma h
&= G_u^{nor}(u,\theta,h) &&\text{on}&& \Sigma,\\
[\![\theta]\!]&=0 &&\text{on}&& \Sigma,\\
-[\![d_0(x)\partial_{\nu_\Sigma}\theta]\!]
&= G_\theta(\theta,h) &&\text{on}&&\Sigma,\\
-2[\![(\mu_0(x)/\rho) \nabla u]\!]\nu_\Sigma\cdot \nu_\Sigma + [\![\pi/\rho]\!]
&= G_h(u,\theta,h) &&\text{on}&&\Sigma,\\
[\![\rho]\!]\partial_t h - [\![\rho u \cdot \nu_\Sigma]\!]+b(t,x)\cdot\nabla_\Sigma h &=F_h(u,h) &&\text{on}&& \Sigma,\\
h(0)&=h_0 &&\text{on} && \Sigma.
\end{aligned}
\end{equation}
The nonlinearities are given by
{\allowdisplaybreaks
\begin{align*}
F_u(u,\theta,\pi,h)
&=(\mu(\theta)-\mu(\theta_0))\Delta u + M_1(h)\nabla\pi - \rho(u\cdot (I-M_1(h))-R(h)\cdot)\nabla u\\
&+\mu^\prime(\theta)
\big( (I-M_1(h)) \nabla\theta\cdot ((I-M_1(h))\nabla u +[(I-M_1(h))\nabla u]^{\sf T})\big) \\
&-\mu(\theta)(M_2(h):\nabla^2) u-\mu(\theta)(M_3(h)\cdot\nabla) u+\mu(\theta)M_4(h):\nabla u,\\
G_d(u,h)&=M_1(h):\nabla u,\\
F_\theta(u,\theta,h)&= \rho(\kappa(\theta_0)-\kappa(\theta))\partial_t \theta
-(d(\theta)-d(\theta_0))\Delta \theta
- d(\theta)M_2(h):\nabla^2 \theta\\
& +d^\prime(\theta)|(I-M_1(h))\nabla \theta|^2 - d(\theta) M_3(h)\cdot\nabla \theta
-\rho\kappa(\theta)(R(h)\cdot\nabla)\theta\\
& -\rho\kappa(\theta)u\cdot (I-M_1(h))\nabla\theta \\
& + \mu(\theta)((I-M_1(h))\nabla u+[(I-M_1(h))\nabla u]^T):(I-M_1(h))\nabla u,\\
G_j(u,h)&= [\![u\cdot\nu_\Sigma]\!]M_0(h)-e^{\Delta_\Sigma t}[\![u_0\cdot\nu_\Sigma]\!]\nabla_\Sigma h,\\
G_u^{tan}(u,\theta,h)
&= P_\Sigma[\![(\mu(\theta)-\mu(\theta_0))(\nabla u +[\nabla u]^T)\nu_\Sigma]\!]\\
&-P_\Sigma [\![\mu(\theta)(\nabla u+[\nabla u]^{\sf T})
M_0(h)\nabla_\Sigma h]\!]\\
&-P_\Sigma [\![\mu(\theta)(M_1(h)\nabla u+[M_1(h)\nabla u]^{\sf T})
(\nu_\Sigma-M_0(h)\nabla_\Sigma h)]\!]\\
&+[\![\mu(\theta)((I-M_1)\nabla u+[(I-M_1)\nabla u]^T)\\
&\phantom{+[\![\mu(\theta)((I-M_1)\nabla u}
(\nu_\Sigma-M_0\nabla_\Sigma h)\cdot\nu_\Sigma]\!]M_0(h)\nabla_\Sigma h,\\
G_u^{nor}(u,\theta,h)&=
[\![(\mu(\theta)-\mu(\theta_0))(\nabla u +[\nabla u]^T)\nu_\Sigma\cdot\nu_\Sigma ]\!]\\
&-[\![\mu(\theta)(\nabla u+[\nabla u]^{\sf T})
M_0(h)\nabla_\Sigma h \cdot \nu_\Sigma ]\!]\\
&-[\![\mu(\theta)(M_1(h)\nabla u+[M_1(h)\nabla u]^{\sf T})
 (\nu_\Sigma-M_0(h)\nabla_\Sigma h )\cdot \nu_\Sigma ]\!]\\
&+\sigma(H_\Gamma(h)-\Delta_\Sigma h)-[\![u\cdot\nu_\Gamma]\!]/[\![1/\rho]\!],\\
G_\theta(\theta,h)&=l(\theta)j -[\![d_0(x)\partial_\nu\theta]\!]+[\![d(\theta)(I-M_1(h))\nabla\theta\cdot\nu_\Gamma]\!],\\
G_h(u,\theta,h)&=-[\![\psi(\theta)]\!] -[\![1/2\rho^2]\!]j^2\\
 &+ 2[\![(\mu(x)/\rho)\partial_\nu u]\!]-[\![(\mu(\theta)/\rho)(M_1(h)\nabla u +[M_1(h)\nabla u]^{\sf T})\nu_\Gamma\cdot\nu_\Gamma]\!],\\
F_h(u,h)&=(b(t,x) -[\![\rho M_0(h)u]\!])\nabla_\Sigma h,\\
 j&= [\![u\cdot\nu_\Sigma]\!]/\beta(h)[\![1/\rho]\!],\quad \nu_\Gamma=\beta(h)(\nu_\Sigma-M_0(h)\nabla_\Sigma h).
\end{align*}
}

Here we employed the abbreviations
\begin{align*}
M_1(h)&= [D\xi_h]^{\sf{T}}{[I + D\xi_h]}^{-\sf{T}}\nabla\bar{\pi},\\
M_2(h)&=M_1(h)+M_1^{\sf T}(h)-M_1(h)M_1^{\sf T}(h),\\
M_3(h)&=(I-M_1(h)){\rm div}\, M_2(h), \\
M_4(h)&= ((I-M_1(h))\nabla) M_1(h)-[((I-M_1(h))\nabla) M_1(h)]^{\sf T}.
\end{align*}
We prove local well-posedness of \eqref{quasi1}, \eqref{quasi2} by means of  maximal
$L_p$-regularity of the linear problem (Theorem \ref{th:3.1})
and the contraction mapping principle.
The right hand side of problem \eqref{quasi1}, \eqref{quasi2} consist of either lower order
terms, or terms of the same order as those appearing on the left hand side
but carry factors which can be made small by construction. Indeed, we have smallness of $h_0$,
$\nabla_\Sigma h_0$ and even of $\nabla^2_\Sigma h_0$ uniformly on
$\Sigma$, because $\Gamma_0$ is approximated by $\Sigma$ in the
second order bundle. $\theta$ appears nonlinearly in $\psi,\kappa,\mu,d$, but only to order zero; hence e.g.\ the difference
$(\mu(\theta(t))-\mu(\theta_0))$ will be uniformly small for small times.
\par

We introduce appropriate function spaces. Let $J=[0,a]$.
The solution spaces are defined by
{\allowdisplaybreaks
\begin{align*}
&\EE_1(a):= \{u\in H_p^1(J; L_p(\Omega))^n\cap
L_p(J; H_p^2(\Omega\setminus \Sigma))^n :\, u=0\ {\rm on}\ \partial\Omega,\},\\
&\EE_2(a):= L_p(J; \dot H^1_p(\Omega\setminus \Sigma)),\\
&\EE_3(a):= [W_p^{1/2-1/2p}(J; L_p(\Sigma))
\cap L_p(J; W^{1-1/p}_p(\Sigma))]^2,\\
&\EE_4(a):= \{\theta\in H_p^1(J; L_p(\Omega))\cap
L_p(J; H^2_p(\Omega\setminus \Sigma)):\, \partial_\nu\theta=0\
{\rm on}\ \partial\Omega,\ \ [\![\theta]\!]=0\},\\
&\EE_5(a):= W_p^{2-1/2p}(J; L_p(\Sigma))
\cap H^1_p(J; W^{2-1/p}_p(\Sigma))
\cap L_p(J; W^{3-1/p}_p(\Sigma)).
\end{align*}}
We abbreviate
$$
\EE(a):= \{(u,\pi,\pi_j,\theta,h)\in
\EE_1(a) \times \EE_2(a) \times \EE_3(a) \times \EE_4(a) \times
\EE_5(a)\},
$$
and equip $\EE_j(a)$ ($j=1,\dots,5$) with their natural norms,
which turns $\EE(a)$ into a Banach space. A left subscript 0 always
means that the time trace at $t=0$ of the function in question is zero whenever it exists.
\par

The data spaces are defined by
{\allowdisplaybreaks
\begin{align*}
&\FF_1(a):= L_p(J; L_p(\Omega))^n,\\
&\FF_2(a):= H_p^{1}(J; \dot H_p^{-1}(\Omega)) \cap
L_p(J; H_p^1(\Omega\setminus\Sigma)),\\
&\FF_3(a):=P_\Sigma[ W_p^{1-1/2p}(J; L_p(\Sigma;\R^n))\cap L_p(J;W^{2-1/p}_p(\Sigma,\R^n))]\\
&\FF_4(a):= W_p^{1/2-1/2p}(J; L_p(\Sigma))^n
\cap L_p(J; W^{1-1/p}_p(\Sigma))^n,\\
&\FF_5(a):= L_p(J; L_p(\Omega)),\\
&\FF_6(a):= W_p^{1/2-1/2p}(J; L_p(\Sigma))
\cap L_p(J; W^{1-1/p}_p(\Sigma)),\\
&\FF_7(a):= W_p^{1/2-1/2p}(J; L_p(\Sigma))
\cap L_p(J; W^{1-1/p}_p(\Sigma)),\\
&\FF_8(a):= W_p^{1-1/2p}(J; L_p(\Sigma))
\cap L_p(J; W^{2-1/p}_p(\Sigma)).
\end{align*}}
We abbreviate
\begin{equation*}
\FF(a):= \{(f_u,g_d,g_j,g_u,f_\theta, g_\theta,g_h,f_h)\in \prod_{k=1}^8 \FF_k(a)\},
\end{equation*}
and equip $\FF_k(a)$ ($k=1,\dots,8$) with their natural norms,
which turns $\FF(a)$ into a Banach space.
\par
\medskip\noindent
{\bf Step 1.}\quad
In order to economize our notation,
we set
$z=(u,\pi,\pi_1,\pi_2, \theta,h)\in \EE(a)$
and reformulate the quasilinear problem \eqref{quasi1}, \eqref{quasi2} as
\begin{equation}
Lz=N(z)\quad (u(0),\theta(0),h(0))=(u_0,\theta_0,h_0),
\label{simple}
\end{equation}
where $L$ denotes the linear operator on the left hand side of
\eqref{quasi1}, \eqref{quasi2}, and $N$ denotes the nonlinear mapping on the right-hand
side of \eqref{quasi1}, \eqref{quasi2}. From Theorem \ref{th:3.1} we know that $L:\EE(a)\to
\FF(a)$ is bounded and linear, and that $L:{}_0\EE(a)\to {}_0\FF(a)$
is an isomorphism for each $a>0$, with norm independent of $0<a\leq a_0<\infty$.
\par
Concerning the nonlinearity $N$, we have the following result.
\begin{proposition}
\label{pr:7.1}
Suppose $p>n+2$, $\sigma>0$, and let
$\psi_i\in C^3(0,\infty)$, $\mu_i,d_i\in C^2(0,\infty)$, $\kappa_i(s)=-s\psi^{\prime\prime}(s),$
for $i=1,2$. \\
Then for each $a>0$ the nonlinearity satisfies
$N\in C^{1}(\EE(a),\FF(a))$
and its Fr{\'e}chet derivative $N^\prime$ satisfies in addition
$
N^\prime(u,\pi,\pi_1,\pi_2,\theta,h)\in \cB({}_0\EE(a),{}_0\FF(a)).
$
Moreover, there is $\eta>0$ such that
for a given $z_*\in\EE(a_0)$ with $|h_0|_{C^2(\Sigma)}\leq
\eta$, there are continuous functions $\alpha(r)>0$ and
$\beta(a)>0$ with $\alpha(0)=\beta(0)=0$,  such that
\begin{equation*}
 \Ver N^\prime(\bar{z}+z_*)\Ver_{\cB({_0\EE}(a),{_0\FF}(a))}
 \leq \alpha(r)+\beta(a), \quad \bar{z}\in \BB_r\subset {_0\EE}(a) .
\end{equation*}
\end{proposition}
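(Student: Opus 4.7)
The plan is to establish the three claims in succession using Nemytskii-type multiplication estimates together with the Sobolev/trace embeddings forced by $p>n+2$. Since $W^{2-2/p}_p(\Omega\setminus\Sigma)\hookrightarrow C^{1+\alpha}(\bar\Omega_j)$ and $W^{3-1/p}_p(\Sigma)\hookrightarrow C^{2+\alpha}(\Sigma)$ for some $\alpha>0$, standard time-trace and interpolation arguments give $\EE_1(a),\EE_4(a)\hookrightarrow C(J;C^{1+\alpha}(\bar\Omega_j))$ and $\EE_5(a)\hookrightarrow C(J;C^{2+\alpha}(\Sigma))\cap C^1(J;C(\Sigma))$. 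Together with $\psi_j,\mu_j,d_j\in C^2$, these embeddings show that each coefficient $\mu(\theta),d(\theta),\kappa(\theta),\psi(\theta)$ and each geometric quantity $M_j(h),\beta(h),\nu_\Gamma(h),H_\Gamma(h)$ is a $C^1$ Nemytskii superposition on the relevant spaces. Combining this with the classical bilinear product estimates in $H^s_p$ and $W^{s-1/p}_p$ on $\Omega\setminus\Sigma$ and on $\Sigma$ establishes $N\in C^1(\EE(a),\FF(a))$. The invariance $N'(z_*+\bar z)\in\cB({}_0\EE(a),{}_0\FF(a))$ then follows because every summand in $N'(z_*+\bar z)\bar w$ contains at least one factor drawn from $\bar w$, whose components by definition have vanishing traces at $t=0$, so the whole product vanishes at $t=0$ in the corresponding trace space of $\FF(a)$.

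The core step is the small-norm estimate. I would classify every summand of $N'(z_*+\bar z)\bar w$ into three types according to its differential order and the structure of its coefficient. Lower-order terms, in which the factor coming from $\bar w$ is differentiated strictly fewer times than the corresponding factor on the left-hand side of $L$, acquire a positive power of $a$ from the zero time-trace via interpolation, producing a contribution to $\beta(a)$. Same-order terms carrying a coefficient of the form $c(\theta_*+\bar\theta)-c(\theta_{*,0})$ or $\mu(\theta)-\mu(\theta_0)$ are small because $\theta\in C(J;C(\bar\Omega))$ with $\theta(0)=\theta_{*,0}$, so the difference is uniformly small on $[0,a]$ as $a\to 0^+$; the perturbative dependence on $\bar z$ contributes $\alpha(r)$ by Lipschitz continuity of the Nemytskii symbol. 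Same-order terms whose coefficient involves $M_j(h_*+\bar h)$ or the quasilinear curvature remainder $H_\Gamma(h)-\Delta_\Sigma h$ exploit that these expressions vanish at $h=0$: smallness of $h_*(t)$ in $C^2(\Sigma)$ on $[0,a]$ follows from $|h_{*,0}|_{C^2}\leq\eta$ combined with $\EE_5(a)\hookrightarrow C(J;C^{2+\alpha}(\Sigma))$, while $\bar h$ is controlled by $r$ since $\bar h(0)=0$.

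The main obstacle is the genuinely top-order product $M_1(h)\nabla\pi$ appearing in $F_u$: the pressure gradient lives only in $L_p(J;L_p(\Omega\setminus\Sigma))$ with no time regularity, so no $a^\delta$ gain is available, and one must control $\|M_1(h)\|_{L_\infty(J\times\Omega)}$ to be small in an absolute sense. This is precisely where the structural smallness hypothesis $|h_{*,0}|_{C^2}\leq\eta$ is indispensable. The analogous difficulty arises for the surface-tension term $\sigma(H_\Gamma(h)-\Delta_\Sigma h)\nu_\Sigma$ in $G_u^{\rm nor}$ and for the viscous jump terms involving $M_0(h)\nabla_\Sigma h$ and $M_1(h)\nabla u$ in $G_u^{\rm tan}$ and $G_h$; in each such case the required smallness is again obtained from $\eta$, the $\EE_5$-embedding and the $r$- and $a$-control of $\bar h$. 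Once all three types are collected and summed over the finitely many summands in each component of $N$, one obtains the claimed bound $\alpha(r)+\beta(a)$ with $\alpha(0)=\beta(0)=0$.
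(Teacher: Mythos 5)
Your proposal is correct in substance and follows essentially the same route as the paper, which itself only records that the nonlinearities are estimated as in \cite{PSSS12} and \cite{PrSh12}: embeddings forced by $p>n+2$, $C^1$ Nemytskii arguments, preservation of zero time traces, and smallness obtained from $a^\delta$-gains for zero-trace factors, from $|h_0|_{C^2(\Sigma)}\leq\eta$ for the genuinely top-order terms such as $M_1(h)\nabla\pi$, and from $r$ for the perturbation $\bar z$. The only cosmetic caveat is that for coefficients like $\mu(\theta)-\mu(\theta_0)$ entering the fractional trace norms of $\FF(a)$, sup-norm smallness alone is not the operative mechanism; one uses, exactly as you do for the lower-order terms, that these coefficients vanish at $t=0$ and hence gain a power of $a$ in the $W^{1/2-1/2p}_p$-type seminorms.
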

This proposition is proved estimating the nonlinearities in the same way as in \cite{PSSS12} and \cite{PrSh12}.

\medskip

\noindent
{\bf Step 2.}
We reduce the problem to initial values 0 and resolve the compatibilities as follows.
Thanks to Proposition 4.1 in \cite{KPW10}, we find
extensions
$g_d^*\in \FF_2(a)$, $g_u^*\in \FF_3(a)$
which satisfy
$$
g_d^*(0)={\rm div}\, u_0,\quad
P_\Sigma g_u^*(0) = -P_\Sigma[\![ \mu_0(\nabla u_0+[\nabla u_0]^{\sf T}) \nu_\Sigma]\!].
$$
Further we define
$$g_j^*:= e^{\Delta_\Sigma t} G_j(\theta_0,h_0),
\qquad g_\theta^*:= e^{\Delta_\Sigma t} G_\theta(u_0,\theta_0,h_0),$$
and  set $f_u^*=f_\theta^*=f_h^*=g_h^*=0$.
With these extensions, by Theorem \ref{th:3.1} we may solve the linear problem
\eqref{linNS}, \eqref{linHeat}, \eqref{linGTS} with initial data $(u_0,\theta_0,h_0)$ and inhomogeneities
$(f_u^*, g_d^*, g_j^*, g_u^*, f_\theta^*, g_\theta^*,g_h^*,f_h^*)$, which satisfy the required regularity conditions
and, by construction, the compatibility conditions,
to obtain a unique solution
$$z^*=(u^*,\pi^*,\pi_1^*,\pi_2^*,\theta^*,h^*)\in \EE(a)$$
with $u^*(0)=u_0$, $\theta^*(0)=\theta_0$, and $h^*(0)=h_0$.

\medskip

\noindent
{\bf Step 3.}\quad We rewrite problem \eqref{simple} as
$$
Lz=N(z+z^*)-Lz^*=: K(z), \quad z\in{}_0\EE(a).
$$
The solution is given by the fixed point problem $z=L^{-1}K(z)$, since Theorem~\ref{th:3.1}
implies that $L: {}_0\EE(a) \to {}_0\FF(a)$ is
an isomorphism with
$$
|L^{-1}|_{\cL({}_0\FF(a),{}_0\EE(a))} \le M, \quad a\in (0,a_0],
$$
where $M$ is independent of $a\le a_0$. We may assume that $M\ge 1$.
Thanks to Proposition
\ref{pr:7.1} and due to $K(0)=N(z^*)-Lz^*$, we may choose $a\in(0,a_0]$
and $r>0$ sufficiently small such that
$$
|K(0)|_{\FF(a)}\le \frac r{2M}, \quad
|K^\prime(z)|_{\cL({}_0\EE(a),{}_0\FF(a))} \le \frac 1{2M}, \quad
z\in {}_0\EE(a), \quad |z|_{\EE(a)}\le r
$$
hence
$$
|K(z)|_{\FF(a)}\le \frac rM,
$$
which ensures that $L^{-1}K(z): \BB_{{}_0\EE(a)}(0,r) \to
\BB_{{}_0\EE(a)}(0,r)$ is a contraction. Thus we may employ the contraction
mapping principle to obtain a unique solution on a time interval
$[0,a]$, which completes the proof of Theorem \ref{wellposed}. Corollary \ref{wellposed3} is obtained in the same way.

\section{Linear Stability of Equilibria}

{\bf 1.} \, We call an equilibrium {\em non-degenerate} if the balls do neither touch each other nor the outer boundary; this set is denoted by $\cE$. To derive the full linearization at a non-degenerate equilibrium $e_*:=(0,\theta_*,\Sigma)\in\cE$,  note that the quadratic terms $u\cdot\nabla u$, $u\cdot\nabla \theta$,
$|D(u)|_2^2$, $[\![u]\!]j$, and $[\![1/2\rho^2]\!]j^2$ give no contribution to the linearization. Therefore we obtain the following fully linearized problem.
\begin{align}\label{elin-u}
\rho\partial_t u-\mu_* \Delta u +\nabla\pi =\rho f_u\quad & \mbox{ in }\Omega\setminus\Sigma,\nn\\
{\rm div}\, u =g_d\quad & \mbox{ in }\Omega\setminus\Sigma,\nn\\
P_\Sigma[\![u]\!]=P_\Sigma g_u\quad & \mbox{ on } \Sigma,\nn\\
 -[\![T(u,\pi,\vartheta) \nu_\Sigma]\!] +\sigma \cA_\Sigma h\nu_\Sigma =g\quad & \mbox{ on } \Sigma,\\
u=0\quad &\mbox{ on } \partial\Omega,\nn\\
u=u_0\quad & \mbox{ in } \Omega,\nn
\end{align}
where $\mu_*=\mu(\theta_*)$ and $\cA_\Sigma =-H^\prime(0)=-(n-1)/R_*^2 -\Delta_\Sigma$.
For the relative temperature $\vartheta=(\theta-\theta_*)/\theta_*$ we obtain
\begin{align}\label{elin-theta}
\rho\kappa_*\partial_t\vartheta -d_*\Delta \vartheta =\rho\kappa_*f_\theta\quad & \mbox{ in } \Omega\setminus\Sigma,\nn\\
[\![\vartheta]\!]=0 \quad & \mbox{ on } \Sigma,\nn\\
-(l_*/\theta_*)[\![u\cdot\nu_\Sigma]\!]/[\![1/\rho]\!] -[\![d_*\partial_{\nu_\Sigma}\vartheta]\!]= g_\theta\quad &\mbox{ on } \Sigma,\\
\partial_\nu\vartheta=0\quad & \mbox{ on } \partial\Omega,\nn\\
\vartheta=\vartheta_0\quad & \mbox{ in } \Omega,\nn
\end{align}
with $\kappa_*=\kappa(\theta_*)$, $d_*=d(\theta_*)$ and $l_*=l(\theta_*)$.
The remaining conditions on the equilibrium interface $\Sigma$ are
\begin{align}\label{elin-h}
[\![-\rho^{-1}T(u,\pi,\vartheta) \nu_\Sigma\cdot\nu_\Sigma]\!] + l_*\vartheta &= g_h\quad  \mbox{ on } \Sigma,\nn\\
\partial_t h -[\![\rho u\cdot\nu_\Sigma]\!]/[\![\rho]\!] &= f_h\quad  \mbox{ on }\Sigma,\\
h(0)&=h_0\quad  \mbox{ on }\Sigma.\nn
\end{align}
The time-trace space $\EE_\gamma$ of $\EE(J)$ is given by
$$ (u_0,\vartheta_0,h_0)\in\EE_\gamma = [W^{2-2/p}_p((\Omega\setminus\Sigma)]^{n+1}\times W^{3-2/p}_p(\Sigma),$$
and the space of data is
\begin{align*}
((f_u,f_\theta),g_d, (f_h,P_\Sigma g_u),(g,g_\theta,g_h))& \in \FF(J)\\
& := \FF_{u,\theta}(J)\times \FF_d(J)\times \FF_h(J)^{n+1}\times\FF_\theta(J)^{n+2},
\end{align*}
where
$$\FF_{u,\theta}(J)=L_p(J\times\Omega)^{n+1},\quad \FF_d(J)=H^1_p(J;\dot{H}^{-1}_p(\Omega))\cap L_p(J;H^1_p(\Omega)),$$
and
$$\FF_\theta(J)=W^{1/2-1/2p}_p(J;L_p(\Sigma))\cap L_p(J;W^{1-1/p}_p(\Sigma)),$$
$$\FF_h(J)=W^{1-1/2p}_p(J;L_p(\Sigma))\cap L_p(J;W^{2-1/p}_p(\Sigma)).$$
Then by localization and coordinate transformations it follows from the maximal regularity result in \cite{PrSh12} that the operator $\LL$ defined by the left hand side of \eqref{elin-u}, \eqref{elin-theta} , \eqref{elin-h} is an isomorphism from $\EE$ into $\FF\times\EE_\gamma$; see also Section 7. The range of $\LL$ is determined by the natural compatibility conditions. If the time derivatives $\partial_t$ are replaced by $\partial_t +\omega$, $\omega>0$ sufficiently large, then this result is also true for $J=\R_+$.

\medskip

\noindent
{\bf 2.}\, We introduce a functional analytic setting as follows.
Set $$X_0=L_{p,\sigma}(\Omega)\times L_p(\Omega)\times W^{2-1/p}_p(\Sigma),$$ where the subscript $\sigma$ means solenoidal, and define the operator $L$ by
$$ L(u,\vartheta,h)= \Big(-(\mu_*/\rho)\Delta u +\nabla\pi/\rho, -(d_*/\rho\kappa_*)\Delta \vartheta, -[\![\rho u\cdot\nu_\Sigma]\!]/[\![\rho]\!]\Big).$$
To define the domain $D(L)$ of $L$, we set
\begin{eqnarray*} X_1= \{(u,\vartheta,h)\in H^2_p(\Omega\setminus\Sigma)^{n+1}\times W^{3-1/p}_p(\Sigma): {\rm div}\, u=0\; \mbox{ in }\; \Omega\setminus\Sigma,\\ \mbox{}P_\Sigma[\![u]\!]=[\![\vartheta]\!]=0\; \mbox{ on } \; \Sigma,\; u=\partial_\nu\vartheta=0\;\mbox{ on }\;\partial\Omega\},\end{eqnarray*}
and
\begin{align*}
D(L)= \{(u,\vartheta,h)\in X_1:  P_\Sigma[\![\mu_*D(u)\nu_\Sigma]\!]=0,\quad
(l_*/\theta_*)j+[\![d_*\partial_{\nu_\Sigma}\vartheta]\!]=0\; \mbox{ on } \; \Sigma\}.
\end{align*}
Here $j$ is given by $j=[\![u\cdot\nu_\Sigma]\!]/[\![\rho^{-1}]\!]$, and $\pi$ is determined as the solution of the weak transmission problem
\begin{eqnarray*}
&&(\nabla\pi|\nabla\phi/\rho)_2=((\mu_*/\rho)\Delta u|\nabla \phi)_2,\quad \phi\in \dot{H}^1_{p^\prime}(\Omega),\\
&& [\![\pi]\!]=-\sigma \cA_\Sigma h+2[\![\mu_* (D(u)\nu_\Sigma|\nu_\Sigma)]\!],\quad \mbox{ on } \Sigma,\\
&& [\![\pi/\rho]\!]= 2[\![(\mu_*/\rho)(D(u)\nu_\Sigma|\nu_\Sigma)]\!]-l_*\vartheta \quad \mbox{ on } \Sigma.
\end{eqnarray*}
Let us introduce solution operators $T_j$, $j\in\{1,2,3\}$ as follows
\begin{align*}
\frac{1}{\rho}\nabla\pi
&=T_1((\mu_*/\rho)\Delta u)+T_2(-\sigma \cA_\Sigma h+2[\![\mu_* (D(u)\nu_\Sigma|\nu_\Sigma)]\!])\\
&+T_3(2[\![(\mu_*/\rho)(D(u)\nu_\Sigma|\nu_\Sigma)]\!]-l_*\vartheta).
\end{align*}
We refer to K\"ohne, Pr\"uss and Wilke \cite{KPW10} for the analysis of such transmission problems.

Then the linearized problem can be rewritten as an abstract evolution problem in $X_0$.
\begin{equation}\label{alp} \dot{z} + Lz =f,\quad t>0,\quad z(0)=z_0,\end{equation}
where $z=(u,\vartheta,h)$, $f=(f_u,f_\theta, f_h)$, $z_0=(u_0,\vartheta_0,h_0)$, provided $g_d=g_u=g=g_\theta=g_h=0$.
The linearized problem has maximal $L_p$-regularity, hence (\ref{alp}) has this property as well. Therefore, by a result due to Hieber and Pr\"uss, $-L$ generates an analytic $C_0$-semigroup in $X_0$; cf.\ Pr\"uss \cite{Pru03}, Proposition 1.1.

Since the embedding $X_1\hookrightarrow X_0$ is compact, the semigroup $e^{-Lt}$ as well as the resolvent $(\lambda+L)^{-1}$ of $-L$ are compact, too. Therefore the spectrum $\sigma(L)$ of $L$ consists of countably many eigenvalues of finite algebraic multiplicity, and it is independent of $p$.

\medskip

\noindent
{\bf 3.} \, We concentrate now on the case $l_*\neq0$. Suppose that $\lambda$ with ${\rm Re}\; \lambda\geq0$ is an eigenvalue of $-L$.
This means
\begin{align}\label{evp-u}
\lambda \rho u-\mu_* \Delta u +\nabla\pi =0\quad & \mbox{ in }\Omega\setminus\Sigma,\nn\\
{\rm div}\, u =0\quad & \mbox{ in }\Omega\setminus\Sigma,\nn\\
P_\Sigma[\![u]\!]=0\quad & \mbox{ on } \Sigma,\nn\\
 -[\![\mu_*D(u)\nu_\Sigma]\!]+[\![\pi]\!] +\sigma \cA_\Sigma h\nu_\Sigma =0\quad & \mbox{ on } \Sigma,\\
u=0\quad &\mbox{ on } \partial\Omega,\nn
\end{align}
\begin{align}\label{evp-theta}
\lambda \rho\kappa_*\vartheta -d_*\Delta \vartheta =0\quad & \mbox{ in } \Omega\setminus\Sigma,\nn\\
[\![\vartheta]\!]=0 \quad & \mbox{ on } \Sigma,\nn\\
-(l_*/\theta_*)[\![u\cdot\nu_\Sigma]\!]/[\![1/\rho]\!] -[\![d_*\partial_{\nu_\Sigma}\vartheta]\!]= 0\quad &\mbox{ on } \Sigma,\\
\partial_\nu\vartheta=0\quad & \mbox{ on } \partial\Omega,\nn
\end{align}
\begin{align}\label{evp-h}
[\![-\rho^{-1}T(u,\pi,\vartheta) \nu_\Sigma\cdot\nu_\Sigma]\!] + l_*\vartheta &= 0\quad  \mbox{ on } \Sigma,\nn\\
\lambda[\![\rho]\!]h -[\![\rho u\cdot\nu_\Sigma]\!] &= 0\quad  \mbox{ on }\Sigma.
\end{align}
Observe that on $\Sigma$ we may write
$$u_k = P_\Sigma u + \lambda h\nu_\Sigma + j\nu_\Sigma/\rho_k,\quad k=1,2.$$
By this identity, taking the inner product of the problem for $u$ with $u$ and integrating by parts we get
\begin{align*}
0&=\lambda |\rho^{1/2}u|_2^2 -({\rm div}\; T(u,\pi,\vartheta) |u)_2\\
&=\lambda |\rho^{1/2}u|_2^2 +\int_\Omega T(u,\pi,\vartheta) :\nabla \bar{u}dx \\
&\phantom{=\lambda |\rho^{1/2}u|_2^2\ }
+\int_\Sigma (T_2(u_2,\pi_2,\vartheta_2) \nu_\Sigma\cdot \bar{u}_2-T_1(u_1,\pi_1,\vartheta_1) \nu_\Sigma\cdot \bar{u}_1)d\Sigma\\
&=\lambda |\rho^{1/2}u|_2^2 + 2|\mu_*^{1/2}D(u)|_2^2 \\
&\phantom{=\lambda |\rho^{1/2}u|_2^2\ }
+([\![T(u,\pi,\vartheta) \nu_\Sigma]\!]|P_\Sigma u + \lambda h\nu_\Sigma)_\Sigma
+ ([\![T(u,\pi,\vartheta) \nu_\Sigma\cdot\nu_\Sigma/\rho]\!]|j)_\Sigma\\
&=\lambda |\rho^{1/2}u|_2^2 + 2|\mu_*^{1/2}D(u)|_2^2 +\sigma\bar{\lambda} (\cA_\Sigma h|h)_\Sigma + l_*(\vartheta|j)_\Sigma,
\end{align*}
since $[\![T(u,\pi,\vartheta)\nu_\Sigma]\!]=\sigma \cA_\Sigma h\nu_\Sigma$ and
$[\![T(u,\pi,\vartheta)\nu_\Sigma\cdot\nu_\Sigma/\rho]\!]= l_*\vartheta$.
On the other hand, the inner product of the equation for $\vartheta$ with  $\vartheta$ by an integration by parts and $[\![\vartheta]\!]=0$ leads to
\begin{align*}
0&= \lambda|(\rho\kappa_*)^{1/2}\vartheta|_2^2 + |d_*^{1/2}\nabla\vartheta|_2^2 +([\![d_*\partial_{\nu_\Sigma}\vartheta]\!]|\vartheta)_\Sigma\\
&= \lambda|(\rho\kappa_*)^{1/2}\vartheta|_2^2 + |d_*^{1/2}\nabla\vartheta|_2^2 - l_*(j|\vartheta)_\Sigma/\theta_*,
\end{align*}
where we employed  $(l_*/\theta_*)j=-[\![d_*\partial_{\nu_\Sigma}\vartheta]\!] $.
Adding theses identities and taking real parts yields the important relation
\begin{align}\label{evid}
0&={\rm Re}\,\lambda |\rho^{1/2}u|_2^2 + 2|\mu_*^{1/2}D(u)|_2^2 +\sigma{\rm Re}\,\lambda (\cA_\Sigma h|h)_\Sigma \nonumber\\
&+\theta_*({\rm Re}\,\lambda|(\rho\kappa_*)^{1/2}\vartheta|_2^2 + |d_*^{1/2}\nabla\vartheta|_2^2).
\end{align}
On the other hand, if $\beta:={\rm Im}\, \lambda\neq0$, then taking imaginary parts separately we get with $a=l_*(\vartheta|j)_\Sigma$
\begin{align*}
0&= \beta|\rho^{1/2}u|_2^2-\sigma\beta (\cA_\Sigma h|h)_\Sigma + {\rm Im}\, a,\\
0&= \beta|(\rho\kappa_*)^{1/2}\vartheta|_2^2 +{\rm Im}\, a/\theta_*,
\end{align*}
hence
$$ \sigma (\cA_\Sigma h|h)_\Sigma = |\rho^{1/2}u|_2^2-\theta_*|(\rho\kappa_*)^{1/2}\vartheta|_2^2.$$
Inserting this identity into \eqref{evid} leads to
$$0=2{\rm Re}\,\lambda |\rho^{1/2}u|_2^2 + 2|\mu_*^{1/2}D(u)|_2^2
+ \theta_*|d_*^{1/2}\nabla\vartheta|_2^2.$$
This shows that if  $\lambda$ is an eigenvalue of $-L$ with ${\rm Re}\, \lambda\geq 0$ then $\lambda$ is real.
In fact, this identity implies $\vartheta=constant$ and $D(u)=0$, hence $j=0$ by the Stefan condition, and then $u=0$ by Korn's inequality and the no-slip condition on $\partial\Omega$, as well as $\vartheta=h=0$ by the equations for $\vartheta$ and $h$, since $\lambda\neq0$.

\medskip

\noindent
{\bf 4.} \, Suppose now that $\lambda>0$ is an eigenvalue of $-L$. Then we further have
$$\lambda \int_{\Sigma} h d\Sigma = \int_\Sigma (u_k\cdot\nu_\Sigma -j/\rho_k) d\Sigma= -\rho_k^{-1}\int_\Sigma j d\Sigma,$$
hence the mean values $\bar{h}$ of $h$ and $\bar{j}$ of $j$ vanish since the densities are non-equal.
Moreover, the identity
$$0=(l_*/\theta_*)\int_\Sigma jd\Sigma = - \int_\Sigma [\![d_*\partial_{\nu_\Sigma}\vartheta]\!]d\Sigma
= \int_\Omega d_*\Delta \vartheta = \lambda\int_\Omega\kappa_*\rho\vartheta dx$$
implies also $(\vartheta|\rho\kappa_*)_{L_2}=0$. Since  $\cA_\Sigma$ is positive semidefinite on functions with mean zero in case $\Sigma$ is connected,  by \eqref{evid} we obtain $u=\vartheta=h=0$, i.e.\ in this case there are no positive eigenvalues.
On the other hand, if $\Sigma$ is disconnected, there is at least one positive eigenvalue. To prove this we need some preparations.

\medskip

\noindent
{\bf 5.} \, First we consider the heat problem
\begin{align}\label{NDdiffusion}
\rho\kappa_*\lambda\vartheta -d_*\Delta \vartheta =0,\quad &  x\in \Omega\setminus\Sigma,\nn\\
[\![\vartheta]\!]=0, \quad &  x\in \Sigma,\nn\\
 -[\![d_*\partial_{\nu_\Sigma}\vartheta]\!]= g,\quad &  x\in \Sigma,\\
\partial_\nu\vartheta=0,\quad &  x\in \partial\Omega,\nn
\end{align}
 to obtain $\vartheta =N_\lambda^H g$, where $N_\lambda^H$ denotes the Neumann-to-Dirichlet operator for this heat problem. The properties of $N_\lambda^H$ are summarized in the following proposition; see \cite{PSZ10} for a proof. We denote by ${\sf e}$ the function which is identically one on $\Sigma$.

\begin{proposition}\label{NHlambda} The Neumann-to-Dirichlet operator $N_\lambda^H$ for the diffusion problem \eqref{NDdiffusion} admits a compact
self-adjoint extension to $L_2(\Sigma)$ which has the following properties.\\
{\bf (i)} \,  If $\vartheta$ denotes the solution of \eqref{NDdiffusion}, then
$$ (N_\lambda^Hg|g)_{L_2(\Sigma)} = \lambda |\sqrt{\rho\kappa_*}\vartheta|^2_{L_2(\Omega)} + |\sqrt{d_*}\nabla\vartheta|_{L_2(\Omega}^2 ,\quad \lambda>0, \; g\in H^{1/2}_2(\Sigma);$$
in particular, $N_\lambda^H$ is injective for $\lambda>0$.\\
{\bf (ii)} \, For each $\alpha\in(0,1/2)$ and $\lambda_0>0$ there is a constant $C>0$ such that
$$(N_\lambda^H g|g)_{L_2(\Sigma)}\geq \frac{\lambda^\alpha}{C}|N_\lambda^Hg|_{L_2(\Sigma)}^2,\quad g\in L_2(\Sigma),\; \lambda\geq\lambda_0;$$
hence
$$ |N_\lambda^H|_{\cB(L_2(\Sigma))}\leq \frac{C}{\lambda^\alpha}, \quad \lambda\geq\lambda_0.$$
{\bf (iii)} \, On $L_{2,0}(\Sigma):= \{g\in L_2(\Sigma):\, (g|{\sf e})_{L_2(\Sigma)}=0\}$, we even have
$$(N_\lambda^H g|g)_{L_2(\Sigma)}\geq \frac{(1+\lambda)^\alpha}{C}|N_\lambda^Hg|_{L_2(\Sigma)}^2,\quad g\in L_{2,0}(\Sigma),\; \lambda>0,$$
and
$$ |N_\lambda^H|_{\cB(L_{2,0}(\Sigma))}\leq \frac{C}{(1+\lambda)^\alpha}, \quad \lambda>0.$$
In particular, for $\lambda=0$, \eqref{NDdiffusion} is solvable if and only if $(g|{\sf e})_{L_2(\Sigma)}=0$, and then the solution is unique up to a constant.
\end{proposition}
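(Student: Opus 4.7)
The plan is to derive everything from a variational characterization of $N_\lambda^H$. For $\lambda>0$, the bilinear form $a_\lambda(\vartheta,\varphi):=\lambda(\rho\kappa_*\vartheta|\varphi)_{L_2(\Omega)}+(d_*\nabla\vartheta|\nabla\varphi)_{L_2(\Omega)}$ is continuous and coercive on $H^1(\Omega)$, so Lax-Milgram produces, for each $g\in H^{-1/2}(\Sigma)$, a unique weak solution $\vartheta\in H^1(\Omega)$ of \eqref{NDdiffusion}; set $N_\lambda^H g:=\vartheta_{|_\Sigma}\in H^{1/2}(\Sigma)$. The identity $(N_\lambda^H g|h)_{L_2(\Sigma)}=a_\lambda(\vartheta^g,\vartheta^h)$ is symmetric in $(g,h)$, and factoring through the compact embedding $H^{1/2}(\Sigma)\hookrightarrow L_2(\Sigma)$ yields the asserted compact self-adjoint extension. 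Assertion (i) is then immediate: testing with $\varphi=\vartheta$ and using the boundary condition $g=-[\![d_*\partial_{\nu_\Sigma}\vartheta]\!]$ gives $(N_\lambda^H g|g)_{L_2(\Sigma)}=a_\lambda(\vartheta,\vartheta)$, and injectivity for $\lambda>0$ follows because the right side vanishes only if $\vartheta\equiv 0$.

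For (ii) I would combine this energy identity with a trace-interpolation estimate. For each $s\in(1/2,1)$ the trace map $H^s(\Omega\setminus\Sigma)\to L_2(\Sigma)$ is continuous, and real interpolation gives $|\vartheta|_{H^s(\Omega\setminus\Sigma)}\leq C|\vartheta|_{L_2(\Omega)}^{1-s}|\vartheta|_{H^1(\Omega\setminus\Sigma)}^{s}$. From (i), for $\lambda\geq\lambda_0$ one has $|\vartheta|_{L_2(\Omega)}^2\leq C\lambda^{-1}(N_\lambda^H g|g)_{L_2(\Sigma)}$ and $|\vartheta|_{H^1(\Omega\setminus\Sigma)}^2\leq C(N_\lambda^H g|g)_{L_2(\Sigma)}$, whence
\begin{equation*}
|N_\lambda^H g|_{L_2(\Sigma)}^2 \leq C\lambda^{-(1-s)}(N_\lambda^H g|g)_{L_2(\Sigma)}.
\end{equation*}
Taking $s=1-\alpha$ produces the desired bound for any $\alpha\in(0,1/2)$. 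The operator norm estimate $|N_\lambda^H|_{\cB(L_2(\Sigma))}\leq C\lambda^{-\alpha}$ then follows from a single application of Cauchy-Schwarz to $(N_\lambda^H g|g)_{L_2(\Sigma)}\leq |g|_{L_2(\Sigma)}|N_\lambda^H g|_{L_2(\Sigma)}$.

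For (iii), on $L_{2,0}(\Sigma)$ integrating the PDE over $\Omega$ gives $\lambda(\rho\kappa_*|\vartheta)_{L_2(\Omega)}=-([\![d_*\partial_{\nu_\Sigma}\vartheta]\!]|{\sf e})_{L_2(\Sigma)}=(g|{\sf e})_{L_2(\Sigma)}=0$, so $\vartheta$ satisfies a weighted mean-zero condition uniformly in $\lambda>0$; Poincaré's inequality then yields $|\vartheta|_{L_2(\Omega)}\leq C|\nabla\vartheta|_{L_2(\Omega)}$. Hence $(N_\lambda^H g|g)_{L_2(\Sigma)}\geq c(1+\lambda)|\vartheta|_{L_2(\Omega)}^2$ and $|\vartheta|_{H^1(\Omega\setminus\Sigma)}^2\leq C(N_\lambda^H g|g)_{L_2(\Sigma)}$ uniformly in $\lambda>0$, and the same interpolation argument with $(1+\lambda)$ replacing $\lambda$ gives the claim on $L_{2,0}(\Sigma)$. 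The $\lambda=0$ statement is the classical Neumann solvability theorem: the compatibility $(g|{\sf e})_{L_2(\Sigma)}=0$ is necessary by integration, and under it Lax-Milgram on $H^1(\Omega)/\R$ produces a solution, unique modulo additive constants. The main point of care throughout is the subcritical exponent $\alpha<1/2$: the endpoint $\alpha=1/2$ fails because the trace $H^{1/2}(\Omega)\to L_2(\Sigma)$ is not bounded, which forces the use of interpolation at $s>1/2$.
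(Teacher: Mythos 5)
Your argument is correct. Note that the paper itself gives no proof of this proposition: it simply refers to \cite{PSZ10}, and the argument there is essentially the one you give — define $N_\lambda^H$ via the form $a_\lambda$, read off the energy identity (i) by testing with the solution, and obtain (ii), (iii) from a trace--interpolation inequality $|\vartheta|_{L_2(\Sigma)}\leq C|\vartheta|_{L_2(\Omega)}^{1-s}|\vartheta|_{H^1(\Omega\setminus\Sigma)}^{s}$, $s>1/2$, which is exactly what forces $\alpha<1/2$. Your treatment of (iii) is the right one: integrating the equation gives the weighted mean-zero property $(\rho\kappa_*\vartheta|1)_{L_2(\Omega)}=0$ for $g\in L_{2,0}(\Sigma)$, and the resulting Poincar\'e inequality makes the lower bounds uniform down to $\lambda=0$, yielding the $(1+\lambda)^\alpha$ version and the classical solvability statement at $\lambda=0$. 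Two cosmetic points only: the identity in (i) and the ensuing inequalities hold directly for all $g\in L_2(\Sigma)$ (indeed $g\in H^{-1/2}(\Sigma)$) since the weak formulation already covers such data, so no separate density step is needed; and in (ii) one should record that the constant absorbs $\lambda_0^{-1}$ through the bound $|\vartheta|^2_{H^1}\leq C(1+\lambda_0^{-1})(N_\lambda^Hg|g)_{L_2(\Sigma)}$, which is consistent with the statement since $C$ is allowed to depend on $\lambda_0$.
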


\medskip

\noindent
{\bf 6.} \,  We also need a corresponding result for the Neumann-to-Dirichlet operator $N_\lambda^S$ for the Stokes problem. It is defined as follows. Given a function $g\in \dot{W}^{1-1/p}_p(\Sigma)$, we solve the Stokes problem
\begin{align}\label{NDStokes}
\rho\lambda u-\mu_* \Delta u +\nabla\pi =0,\quad &  x\in \Omega\setminus\Sigma,\nn\\
{\rm div}\, u =0,\quad &  x\in \Omega\setminus\Sigma,\nn\\
[\![u]\!] =0,\quad & x\in \Sigma,\\
 -[\![T(u,\pi,\vartheta) \nu_\Sigma]\!] = g\nu_\Sigma,\quad &  x\in \Sigma,\nn\\
u=0,\quad & x\in \partial\Omega,\nn
\end{align}
and define $N_\lambda^S g := u\cdot \nu_\Sigma$ on $\Sigma$. For this well-defined operator we have

\begin{proposition}\label{NSlambda} The Neumann-to-Dirichlet operator $N_\lambda^S$ for the Stokes problem \eqref{NDStokes} admits a compact self-adjoint extension to $L_2(\Sigma)$ which has the following properties.\\
{\bf (i)} \, If $u$ denotes the solution of \eqref{NDStokes}, then
$$ (N_\lambda^Sg|g)_{L_2(\Sigma)} = \lambda \int_\Omega \rho|u|^2 dx  + 2\int_\Omega \mu_*|D(u)|_2^2 dx,\quad \lambda\geq0, \; g\in H^{1/2}_2(\Sigma).$$
{\bf (ii)} \, For each $\alpha\in(0,1/2)$ there is a constant $C>0$ such that
$$(N_\lambda^S g|g)_{L_2(\Sigma)}\geq \frac{(1+\lambda)^\alpha}{C}|N_\lambda^Sg|_{L_2(\Sigma)}^2,\quad g\in L_2(\Sigma),\; \lambda\geq0.$$
In particular,
$$ |N_\lambda^S|_{\cB(L_2(\Sigma))}\leq \frac{C}{(1+\lambda)^\alpha}, \quad \lambda\geq0.$$
{\bf (iii)} \, Let $\Sigma_k$ denote the components of $\Sigma$ and let ${\sf e}_k$ be the function which is one on $\Sigma_k$, zero elsewhere. Then $(N_\lambda^Sg|{\sf e}_k)_{L_2(\Sigma)}=0$ for each  $g\in L_2(\Sigma)$. In particular, $N_\lambda^Se_k=0$ for each $k$, and $N_\lambda^Sg$ has mean value zero for each $g\in L_2(\Sigma)$.
\end{proposition}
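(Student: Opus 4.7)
The argument closely parallels Proposition~\ref{NHlambda}: the main tools are an energy identity obtained by integration by parts, Korn's inequality, a trace--interpolation estimate, and the divergence theorem. I would establish (i) first, which also yields self--adjointness by polarisation, then derive (iii) from the divergence--free condition, and finally obtain the coercive estimate (ii) by splitting into a low-- and a high--frequency regime in $\lambda$.

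\emph{Step 1 (solvability and identity (i)).} For $g \in \dot W^{1-1/p}_p(\Sigma)$, the Stokes problem \eqref{NDStokes} admits a unique strong solution $(u,\pi)$ by the maximal $L_p$-regularity of the two-phase Stokes operator (applied to the stationary resolvent problem, cf.\ Section~7). Taking the $L_2$-inner product of the momentum equation with $\bar u$ and integrating by parts over $\Omega_1$ and $\Omega_2$ separately, the conditions $\operatorname{div} u = 0$, $u|_{\partial\Omega}=0$ and $[\![u]\!]=0$ collapse the boundary terms to $-([\![T(u,\pi)\nu_\Sigma]\!]\mid u)_{L_2(\Sigma)} = (g\mid u\cdot\nu_\Sigma)_{L_2(\Sigma)} = (g\mid N_\lambda^S g)_{L_2(\Sigma)}$, producing the identity in (i). Polarising with two data $g_1,g_2$ and their solutions $u_1,u_2$ yields $(N_\lambda^S g_1 \mid g_2) = (g_1 \mid N_\lambda^S g_2)$, so that $N_\lambda^S$ is symmetric on this dense subspace.

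\emph{Step 2 (part (iii)).} Each component $\Sigma_k$ of $\Sigma$ bounds a connected component $\Omega_{1,k}$ of $\Omega_1$, so the divergence theorem and $\operatorname{div} u = 0$ give
\[
(N_\lambda^S g \mid {\sf e}_k)_{L_2(\Sigma)} \;=\; \int_{\Sigma_k} u\cdot\nu_\Sigma \, d\Sigma \;=\; \int_{\Omega_{1,k}} \operatorname{div} u \, dx \;=\; 0
\]
for every $g$ in the dense subspace, whence summing over $k$ furnishes the mean-value assertion. Together with the symmetry from Step~1 this forces $N_\lambda^S {\sf e}_k = 0$; alternatively, since $\Omega_2$ is connected, $u\equiv 0$ together with a piecewise-constant pressure satisfying $[\![\pi]\!]={\sf e}_k$ can be seen to solve \eqref{NDStokes} directly.

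\emph{Step 3 (coercivity (ii)).} Since $u|_{\partial\Omega}=0$, Korn's and Poincaré's inequalities provide $|u|_{H^1(\Omega)}^2 \leq C|D(u)|_{L_2}^2$, which combined with (i) gives $|u|_{H^1(\Omega)}^2 \leq C(N_\lambda^S g \mid g)_{L_2(\Sigma)}$ uniformly in $\lambda\geq0$. For bounded $\lambda$, the standard trace inequality $|u\cdot\nu_\Sigma|_{L_2(\Sigma)}^2 \leq C|u|_{H^1(\Omega)}^2$ already yields the estimate. For large $\lambda$ I invoke the interpolated trace bound
\[
|u|_{L_2(\Sigma)}^2 \;\leq\; C\bigl(\delta |\nabla u|_{L_2}^2 + \delta^{-1}|u|_{L_2}^2\bigr),
\]
choose $\delta = \lambda^{-1/2}$, and absorb $\lambda |u|_{L_2}^2 \leq C(N_\lambda^S g\mid g)$ from (i), to get $|N_\lambda^S g|_{L_2(\Sigma)}^2 \leq C\lambda^{-1/2}(N_\lambda^S g\mid g)$. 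Interpolating the two regimes gives the claim for every $\alpha\in(0,1/2)$, and the operator-norm bound follows by Cauchy--Schwarz.

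\emph{Step 4 (extension and compactness).} The bound of Step~3 extends $N_\lambda^S$ by density to a bounded operator on $L_2(\Sigma)$, and Step~1 makes it self-adjoint. Since $u$ has $H^{3/2-1/p}$-regularity up to $\Sigma$, the range of $N_\lambda^S$ lies in $H^{1/2}(\Sigma)$, and the compact embedding $H^{1/2}(\Sigma) \hookrightarrow L_2(\Sigma)$ delivers compactness. The main technical obstacle is Step~3: producing the sharp $(1+\lambda)^\alpha$-dependence uniformly down to $\lambda=0$ requires the trace--interpolation and the mass/dissipation split to be matched cleanly, and that Korn's inequality alone (without any lower-order term) controls $|u|_{H^1}$ at $\lambda=0$ — both of which hinge on the no-slip condition on $\partial\Omega$.
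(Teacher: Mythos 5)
Your argument is essentially correct, but it is a genuinely different (and more self-contained) route than the paper's: the paper gives no proof of Proposition~\ref{NSlambda} at all, deferring to \cite{PSZ12}, Proposition 4.1 (equal densities) with the remark that the result carries over to $[\![\rho]\!]\neq0$; the quantitative bounds there, and in the companion Proposition~\ref{as-Stokes} proved in this paper, are obtained by lifting the Neumann data into the pressure via a harmonic extension with $|q_0|_{H^{1/2}_2(\Omega\setminus\Sigma)}\leq c|g|_{L_2(\Sigma)}$ and then invoking resolvent/fractional-power estimates for the exponentially stable analytic Stokes semigroup with homogeneous interface conditions, which is what produces the decay $(1+\lambda)^{-\beta}$, $\beta<1/2$, together with the uniform $L_{2}\to H^1_2(\Sigma)$ bound. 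Your route instead stays entirely at the energy level: the identity (i) plus Korn--Poincar\'e (note $[\![u]\!]=0$, so $u\in H^1_0(\Omega)^n$ globally and Korn applies without lower-order terms) and the multiplicative trace inequality $|u|^2_{L_2(\Sigma)}\leq C|u|_{L_2(\Omega)}|u|_{H^1(\Omega)}$ with $\delta=\lambda^{-1/2}$ give (ii) directly, in fact with the endpoint exponent $1/2$, so the stated range $\alpha\in(0,1/2)$ is covered a fortiori; (iii) is the divergence theorem on each component $\Omega_{1,k}$ plus symmetry, exactly as one would expect, and your alternative explicit solution $u\equiv0$, $[\![\pi]\!]={\sf e}_k$ is precisely the non-injectivity observation the paper makes after the proposition. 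What the paper's semigroup approach buys is uniformity of the stronger $L_{2,0}\to H^1_2(\Sigma)$ mapping bound in $\lambda$, which is needed later for the asymmetric operator $S_\lambda$; your elementary argument suffices for Proposition~\ref{NSlambda} itself.

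Two small points of precision. First, in Step 4 the justification of compactness via ``$H^{3/2-1/p}$-regularity up to $\Sigma$'' mixes the $L_p$-solution class for data in $\dot W^{1-1/p}_p(\Sigma)$ with the bound needed for the $L_2$-extension; the clean statement is already contained in your own estimates: from (i) and (ii), $|u|_{H^1(\Omega)}\leq C|g|_{L_2(\Sigma)}$, hence $|N_\lambda^S g|_{H^{1/2}_2(\Sigma)}\leq C|g|_{L_2(\Sigma)}$ by trace theory and smoothness of $\nu_\Sigma$, and compactness follows from $H^{1/2}_2(\Sigma)\hookrightarrow\hookrightarrow L_2(\Sigma)$. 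Second, for solvability at $\lambda=0$ it is simpler (and sufficient for the dense class) to use the variational formulation on $H^1_{0,\sigma}(\Omega)$ with Lax--Milgram, coercivity coming from Korn and the no-slip condition, rather than maximal regularity for the resolvent problem; the pressure is then determined only up to one additive constant, which does not affect $N_\lambda^S$.
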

This result is proved in \cite{PSZ12}, Proposition 4.1 for the case of equal densities. It carries over directly to the case $[\![\rho]\!]\neq0$ considered here. Note that $N_\lambda^S$ even on $L_{2,0}(\Sigma)$ is not injective in case $\Sigma$ is disconnected.

\medskip

\noindent
{\bf 7.} \, Next we solve the asymmetric Stokes problem
\begin{align}\label{asStokes}
\rho\lambda u-\mu_* \Delta u +\nabla\pi =0,\quad &  x\in \Omega\setminus\Sigma,\nn\\
{\rm div}\, u =0,\quad &  x\in \Omega\setminus\Sigma,\nn\\
P_\Sigma[\![u]\!] = P_\Sigma[\![T(u,\pi,\vartheta) \nu_\Sigma]\!]=0,\quad & x\in \Sigma,\\
 -[\![T(u,\pi,\vartheta) \nu_\Sigma\cdot\nu_\Sigma]\!] = g_1,\quad &  x\in \Sigma,\nn\\
  -[\![T(u,\pi,\vartheta) \nu_\Sigma\cdot\nu_\Sigma/\rho]\!] = g_2,\quad &  x\in \Sigma,\nn
\end{align}
to obtain as output
$$k=[\![\rho u\cdot\nu_\Sigma]\!]/[\![\rho]\!]= S_\lambda^{11}g_1+S_\lambda^{12}g_2,\quad
j=[\![u\cdot\nu_\Sigma]\!]/[\![1/\rho]\!]= S_\lambda^{21}g_1+S_\lambda^{22}g_2.$$
For this problem we have

\begin{proposition}\label{as-Stokes}
The operator $S_\lambda$ for the Stokes problem \eqref{asStokes} admits a bounded extension to $L_{2,0}(\Sigma)^2$ for $\lambda\geq0$ and has the following properties.\\
{\bf (i)} \, If $u$ denotes the solution of \eqref{asStokes}, then
$$ (S_\lambda g|g)_{L_2(\Sigma)^2} = \lambda \int_\Omega \rho|u|^2dx + 2\int_\Omega \mu_*|D(u)|_2^2 dx,\ \lambda\geq0, \; g\in L_{2,0}(\Sigma)^2\cap H^{1/2}_2(\Sigma)^2.$$
{\bf (ii)} \, $S_\lambda\in \cB(L_{2,0}(\Sigma)^2)$ is self-adjoint, positive semidefinite, and compact; in particular
$$S^{11}_\lambda = [S_\lambda^{11}]^*,\quad S^{22}_\lambda = [S_\lambda^{22}]^*,\quad S^{12}_\lambda = [S_\lambda^{21}]^*.$$
{\bf (iii)} \,$S_\lambda^{11}$ and $S_\lambda^{22}$ are injective in $L_{2,0}(\Sigma)$, and with $G_\lambda= [S_\lambda^{22}]^{-1}$ we have
$$N_\lambda^S = S_\lambda^{11} - S^{12}_\lambda G_\lambda S_\lambda^{21}.$$
 $G_\lambda$ is self-adjoint and positive definite on $L_{2,0}(\Sigma)$, its resolvent is compact in $L_{2,0}(\Sigma)$, for each $\lambda\geq0$.\\
{\bf (iv)} \, For each $\beta\in(0,1/2)$ there is a constant $C_\beta>0$ such that
$$ |S_\lambda|_{\cB(L_{2,0}(\Sigma)^2)}\leq \frac{C_\beta}{(1+\lambda)^\beta}, \quad \lambda\geq0.$$
{\bf (v)}\, $|S_\lambda|_{\cB(L_{2,0}(\Sigma)^2,H^{1}_2(\Sigma)^2)} \leq C$ uniformly for $\lambda\geq0$.\\
{\bf (vi)} \, $S_\lambda^{11}, S_\lambda^{22}: L_{2,0}(\Sigma)\to H^{1}_2(\Sigma)\cap L_{2,0}(\Sigma)$ are isomorphisms, for each $\lambda\geq0$.
\end{proposition}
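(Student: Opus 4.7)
The starting point is well-posedness of the asymmetric Stokes transmission problem \eqref{asStokes} for every $\lambda\geq 0$: this is the resolvent version of the principal part of \eqref{linFB} with $h=0$, $b=c=0$, and Theorem~\ref{th:3.1} (or, more precisely, its $L_2$-counterpart obtained by the same localization and perturbation argument from the half-space result in \cite{PrSh12} as in Section~7) furnishes a unique solution $(u,\pi)$ for $(g_1,g_2)$ in a suitable Sobolev scale. The operator $S_\lambda$ is then the assignment $(g_1,g_2)\mapsto(k,j)$. Two algebraic observations will be used throughout: first, the identity $u_i\cdot\nu_\Sigma=k+j/\rho_i$ on $\Sigma$ for $i=1,2$, which links the output pair to the one-sided normal traces of $u$; second, the fact that ${\rm div}\,u=0$ together with $u=0$ on $\partial\Omega$ forces $\int_\Sigma j\,d\Sigma=\int_\Sigma k\,d\Sigma=0$, explaining why $S_\lambda$ maps $L_{2,0}(\Sigma)^2$ into itself.

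To establish (i), I would test $\lambda\rho u-{\rm div}\,T(u,\pi)=0$ against $u$ and integrate by parts to obtain
\begin{equation*}
\lambda\int_\Omega\rho|u|^2\,dx+2\int_\Omega\mu_*|D(u)|_2^2\,dx+\int_\Sigma[\![T(u,\pi)\nu_\Sigma\cdot u]\!]\,d\Sigma=0.
\end{equation*}
The tangential contribution to the interface integrand vanishes because $P_\Sigma[\![u]\!]=P_\Sigma[\![T\nu_\Sigma]\!]=0$, while the identity $u_i\cdot\nu_\Sigma=k+j/\rho_i$ together with $-[\![T\nu_\Sigma\cdot\nu_\Sigma]\!]=g_1$ and $-[\![T\nu_\Sigma\cdot\nu_\Sigma/\rho]\!]=g_2$ reduces the normal contribution to $-(kg_1+jg_2)$, giving (i). Applying the same Green identity to two solutions $(u,\pi)$ and $(v,q)$ of \eqref{asStokes} with data $g$ and $f$ produces the symmetric Dirichlet form $\int_\Omega\lambda\rho u\cdot v+2\mu_*D(u){:}D(v)\,dx$, hence $(S_\lambda g|f)=(g|S_\lambda f)$. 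Positive semidefiniteness is (i); compactness follows from the boundedness $S_\lambda\colon L_{2,0}(\Sigma)^2\to H^1_2(\Sigma)^2$ obtained in (v) below combined with the compact embedding $H^1_2(\Sigma)\hookrightarrow L_2(\Sigma)$. This proves (ii).

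For (iii), if $S_\lambda^{22}g_2=0$ with $g_1=0$, then (i) forces $u=0$, so $\pi$ is constant on $\Omega_2$ and on each component of $\Omega_1$; the condition $g_1=[\![\pi]\!]=0$ equalizes all these constants with $\pi_2$, and the mean-zero constraint on $g_2=[\![\pi/\rho]\!]=\pi_2[\![1/\rho]\!]$ then forces $\pi_2=0$ and $g_2=0$. An analogous argument yields injectivity of $S_\lambda^{11}$. The Schur-complement formula is the observation that \eqref{NDStokes} is the restriction of \eqref{asStokes} to the constraint $j=0$: the constraint determines $g_2=-G_\lambda S_\lambda^{21}g_1$, after which $u\cdot\nu_\Sigma=k=(S_\lambda^{11}-S_\lambda^{12}G_\lambda S_\lambda^{21})g_1$. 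Self-adjointness, positive definiteness and compactness of the resolvent of $G_\lambda$ are inherited from the corresponding properties of $S_\lambda^{22}$ on $L_{2,0}(\Sigma)$.

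Parts (iv)--(vi) follow from energy and trace estimates. Since $u=0$ on $\partial\Omega$, Korn's and Poincar{\'e}'s inequalities yield $\|u\|_{H^1_2(\Omega\setminus\Sigma)}^2\leq C(S_\lambda g|g)$ uniformly in $\lambda\geq 0$, while $\lambda\|u\|_{L_2(\Omega)}^2\leq(S_\lambda g|g)$ for $\lambda>0$. Combined with the standard trace interpolation $\|u\|_{L_2(\Sigma)}^2\leq C\|u\|_{L_2(\Omega)}\|u\|_{H^1(\Omega)}$ this produces $\|S_\lambda g\|_{L_2(\Sigma)^2}^2\leq C(1+\lambda)^{-1/2}(S_\lambda g|g)\leq C(1+\lambda)^{-1/2}\|g\|_2\|S_\lambda g\|_2$, proving (iv) for every $\beta<1/2$. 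For (v), $L_2$-regularity of the asymmetric Stokes problem gives $\|u\|_{H^2_2(\Omega\setminus\Sigma)}\leq C\|g\|_{H^{1/2}_2(\Sigma)^2}$; interpolating between this and the $L_2(\Sigma)$-bound just obtained upgrades it to the claimed uniform estimate $\|S_\lambda g\|_{H^1_2(\Sigma)^2}\leq C\|g\|_{L_{2,0}(\Sigma)^2}$. Finally (vi) follows from injectivity (iii), continuity (v), self-adjointness on $L_{2,0}(\Sigma)$, and the closed-range theorem. The main technical obstacle is the uniform-in-$\lambda$ $H^1$-regularity needed in (v): it requires a careful interpolation between the stationary Stokes regularity at $\lambda=0$ (where one invokes the results of \cite{KPW10}) and the resolvent gain at large $\lambda$, handling simultaneously the absence of a standard Dirichlet or Neumann condition at $\Sigma$ and the mixed nature of the two jump data $g_1,g_2$.
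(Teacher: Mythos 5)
Your treatment of (i)--(iii) matches the paper's: the energy identity via testing with $u$ and the decomposition $u_i\cdot\nu_\Sigma=k+j/\rho_i$, the Green identity for self-adjointness, injectivity via $[\![\pi]\!]=0$ plus the mean-zero constraint, and the Schur-complement identification of $N_\lambda^S$ are all as in the paper (only note that at $\lambda=0$ the identity (i) gives $D(u)=0$, not $u=0$, which suffices since then $\nabla\pi=0$). Your energy route to (iv) is a genuinely different and viable alternative to the paper's argument, provided you apply Korn phase-by-phase: $u$ jumps across $\Sigma$ in the normal direction, so you must control the rigid motions in the components of $\Omega_1$ through the matching of tangential traces $P_\Sigma[\![u]\!]=0$ before Korn--Poincar\'e gives $\|u\|_{H^1(\Omega\setminus\Sigma)}^2\leq C(S_\lambda g|g)$ uniformly in $\lambda$.

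The genuine gaps are in (v) and (vi). For (v), the interpolation you propose cannot produce the claim: even granting a $\lambda$-uniform bound $\|u\|_{H^2_2}\leq C\|g\|_{H^{1/2}_2}$ (which is itself false in general -- the model half-space computation shows a $\lambda^{1/4}\|g\|_{L_2}$ term is unavoidable), interpolating the pair $S_\lambda\colon H^{1/2}_2\to H^{3/2}_2(\Sigma)$ and $S_\lambda\colon L_2\to L_2$ only yields $H^{\theta/2}\to H^{3\theta/2}$ bounds, and no choice of $\theta$ gives $L_2\to H^1_2$. The paper's mechanism is different and essential: one lifts the data by a pressure $q_0$ with prescribed one-sided traces ($[\![q_0]\!]=g_1$, $[\![q_0/\rho]\!]=g_2$), extended harmonically so that $\|q_0\|_{H^{1/2}_2(\Omega\setminus\Sigma)}\leq C\|g\|_{L_2(\Sigma)}$; then $(u,\pi-q_0+\dots)$ solves an asymmetric Stokes problem with \emph{homogeneous} interface conditions, $u_\lambda=-(\lambda+A)^{-1}[\nabla q_0/\rho]$ with $-A$ the generator of an exponentially stable analytic semigroup, and the uniform $L_2\to H^1_2(\Sigma)$ bound and the decay $(1+\lambda)^{\beta-1}$ follow from the resolvent estimates for $A$ together with trace theory. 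For (vi), injectivity, self-adjointness and boundedness give only dense range in $L_{2,0}(\Sigma)$; the closed-range theorem does not apply without an a priori estimate of the form $\|g\|_{L_2}\leq C\|S^{11}_\lambda g\|_{H^1_2}$, which nothing in your argument supplies. The paper instead proves surjectivity onto $H^1_2(\Sigma)\cap L_{2,0}(\Sigma)$ by solving the inverse boundary value problems \eqref{asStokes1} and \eqref{asStokes2} (prescribing $[\![\rho u\cdot\nu_\Sigma]\!]$, respectively $[\![u\cdot\nu_\Sigma]\!]$, together with the complementary stress condition), which requires localization to a flat interface, partial Fourier transform, and the verification in the Appendix that the associated Lopatinskii determinants have no zeros in the closed right half-plane. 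This analysis is the hardest part of the proposition and is absent from your proposal.
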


\begin{proof}
{\bf (a)}\, First observe that for the traces $u_j$ of $u$ on $\Sigma$ we have
$$ u_j= P_\Sigma u +k\nu_\Sigma + j\nu_\Sigma/\rho_j = u_b + j\nu_\Sigma/\rho_j \quad \mbox{ on }  \Sigma.$$
To prove assertion (i), let $(u,\pi)$ denote the solution of \eqref{asStokes}. Multiply with $u$ and integrate by parts to the result
\begin{align*}
\lambda\int_\Omega \rho|u|^2dx &+ 2\int_\Omega \mu_*|D(u)|_2^2dx = \int_\Omega {\rm div}\, (T(u,\pi,\vartheta) \bar{u})dx\\
&= -\int_\Sigma [\![\bar{u}\cdot T(u,\pi,\vartheta) \nu_\Sigma]\!]d\Sigma
= -\int_\Sigma [\![\overline{(u_b+j\nu_\Sigma/\rho)}\cdot T(u,\pi,\vartheta) \nu_\Sigma]\!]d\Sigma\\
&=- \int_\Sigma \overline{u_b}\cdot[\![T(u,\pi,\vartheta) \nu_\Sigma]\!]d\Sigma
- \int_\Sigma \bar{j}[\![T(u,\pi,\vartheta) \nu_\Sigma\cdot\nu_\Sigma/\rho]\!]d\Sigma\\
&=  \int_\Sigma \bar{k} g_1d\Sigma + \int_\Sigma \bar{j} g_2d\Sigma=(g|S_\lambda g)_{L_2(\Sigma)^2}.
\end{align*}
A similar computation yields
$$(S_\lambda g|h)_{L_2(\Sigma)^2}=(g|S_\lambda h)_{L_2(\Sigma)^2},$$
hence $S_\lambda$ is self-adjoint in $L_2(\Sigma)^2$, thereby proving the first part of (ii).

\medskip

\noindent
{\bf (b)}\,To prove injectivity of $S_\lambda^{22}$, let $g_1=0$ and $j=0$. Then (i) implies $\lambda u=D(u)=0$ hence $\nabla\pi=0$ in $\Omega\setminus\Sigma$,
and so $\pi$ is constant in the components of the phases. Next
$$0=g_1=-[\![T(u,\pi,\vartheta) \nu_\Sigma\cdot\nu_\Sigma]\!]=[\![\pi]\!],\quad
g_2=-[\![T(u,\pi,\vartheta) \nu_\Sigma\cdot\nu_\Sigma/\rho]\!]=[\![\pi/\rho]\!]$$
shows that $\pi$ is even constant in the phases, and so $g_2$ is constant on $\Sigma$ hence zero since its mean value vanishes.
Injectivity of $S_\lambda^{11}$ is shown in a similar way. This proves the first assertion in (iii), the second one follows from the definitions of $N_\lambda^S$ and $S_\lambda$, and the third one is a consequence of (ii).

\medskip

\noindent
{\bf (c)}\, To establish the boundedness properties of $S_\lambda$, we proceed as follows. Suppose $g_1,g_2\in L_{2,0}(\Sigma)\cap H^{1/2}_2(\Sigma)$ are given. We decompose the pressure into $\pi= q+q_0$, where
the one-sided traces $q_0^j $ are uniquely determined on $\Sigma$ by
$$ [\![q_0]\!]=g_1,\quad [\![q_0 /\rho]\!]=g_2;$$
then $q_0^j\in  L_{2,0}(\Sigma)$. Extend $q_0$ to all of $\Omega$ in $H^{1/2}_2(\Omega\setminus\Sigma)$,  by solving the problem
$$ \Delta v=0 \mbox{ in } \Omega\setminus \Sigma,\quad v=q_0 \mbox{ on }\Sigma,\quad v=0 \mbox{ on }\partial\Omega.$$
There is a constant $c_0>0$ such that
$$ |q_0|_{H^{1/2}_2(\Omega\setminus\Sigma)}\leq c_0 |g|_{L_2(\Sigma)}.$$
Then $(u,q)$ satisfies the asymmetric Stokes problem
\begin{align}\label{asStokes-hom}
\rho\lambda u-\mu_* \Delta u +\nabla q =-\nabla q_0,\quad &  x\in \Omega\setminus\Sigma,\nn\\
{\rm div}\, u =0,\quad &  x\in \Omega\setminus\Sigma,\nn\\
P_\Sigma[\![u]\!] = P_\Sigma[\![\mu_* D(u) \nu_\Sigma]\!]=0,\quad & x\in \Sigma,\\
 -2[\![\mu_*D(u)\nu_\Sigma\cdot\nu_\Sigma]\!] +[\![q]\!]= 0,\quad &  x\in \Sigma,\nn\\
  -2[\![\mu_*D(u)\nu_\Sigma\cdot\nu_\Sigma/\rho]\!] +[\![q/\rho]\!]= 0,\quad &  x\in \Sigma,\nn\\
u=0,\quad & x\in \partial\Omega.\nn
\end{align}
Note that the interface conditions are now homogeneous. Let $-A$ denote the generator of the associated analytic $C_0$-semigroup in $L_{2,\sigma}(\Omega)$, which is exponentially stable. Then we have
$$ u_\lambda :=-(\lambda+A)^{-1} [\nabla q_0/\rho], $$
and with
$$ S_\lambda g = (k,j)= ([\![\rho u_\lambda\cdot\nu_\Sigma]\!]/[\![\rho]\!], [\![ u_\lambda\cdot\nu_\Sigma]\!]/[\![1/\rho]\!])$$
we estimate using trace theory and the resolvent estimate for $A$ and ${\rm Re}\,\sigma(-A)<0$
\begin{align*}|S_\lambda g|_{H^{1}_2(\Sigma)}\leq C |u_\lambda|_{H^{3/2}_2(\Omega\setminus\Sigma)}\leq C|q_0|_{H^{1/2}_2(\Omega\setminus\Sigma)}
\leq C |g|_{L_2(\Sigma)},
\end{align*}
as well as
$$|S_\lambda g|_{L_2(\Sigma)}\leq C |u_\lambda|_{H^{\alpha}_2(\Omega\setminus\Sigma)}
\leq C(1+\lambda)^{\beta-1}|q_0|_{H^{1/2}_2(\Omega\setminus\Sigma)}
\leq C (1+\lambda)^{\beta-1}|g|_{L_2(\Sigma)},$$
with $\beta=(\alpha+1/2)/2$, for each fixed $\alpha>1/2$. These estimates are uniform w.r.t.\ $\lambda\geq0$.

\medskip

\noindent
{\bf (d)}\, The proof of (vi) is more involved. To obtain surjectivity of $S_\lambda^{11}$  we have to solve the problem
\begin{align}\label{asStokes1}
\rho\lambda u-\mu_* \Delta u +\nabla\pi =0,\quad &  x\in \Omega\setminus\Sigma,\nn\\
{\rm div}\, u =0,\quad &  x\in \Omega\setminus\Sigma,\nn\\
P_\Sigma[\![u]\!] = P_\Sigma[\![T(u,\pi,\vartheta) \nu_\Sigma]\!]=0,\quad & x\in \Sigma,\\
[\![\rho u\cdot\nu_\Sigma]\!]=[\![\rho]\!]k,\quad &  x\in \Sigma,\nn\\
  -[\![T(u,\pi,\vartheta) \nu_\Sigma\cdot\nu_\Sigma/\rho]\!] = 0,\quad &  x\in \Sigma,\nn\\
u=0,\quad & x\in \partial\Omega,\nn
\end{align}
for given $k\in H^1_2(\Sigma)\cap L_{2,0}(\Sigma)$ with output $[S^{11}_\lambda]^{-1} k = g_1=-[\![T(u,\pi,\vartheta) \nu_\Sigma\cdot\nu_\Sigma]\!]$. Similarly, to prove surjectivity of $S^{22}_\lambda$ the problem to solve is
\begin{align}\label{asStokes2}
\rho\lambda u-\mu_* \Delta u +\nabla\pi =0,\quad &  x\in \Omega\setminus\Sigma,\nn\\
{\rm div}\, u =0,\quad &  x\in \Omega\setminus\Sigma,\nn\\
P_\Sigma[\![u]\!] = P_\Sigma[\![T(u,\pi,\vartheta) \nu_\Sigma]\!]=0,\quad & x\in \Sigma,\\
[\![u\cdot\nu_\Sigma]\!]=[\![1/\rho]\!] j\quad &  x\in \Sigma,\nn\\
  -[\![T(u,\pi,\vartheta) \nu_\Sigma\cdot\nu_\Sigma]\!] = 0,\quad &  x\in \Sigma,\nn\\
u=0,\quad & x\in \partial\Omega,\nn
\end{align}
for given $j$ and the output will be $[S^{22}_\lambda]^{-1} j = g_2= -[\![T(u,\pi,\vartheta) \nu_\Sigma\cdot\nu_\Sigma/\rho]\!]$.
These problems can be solved in a similar way as in the proof of Theorem \ref{th:3.1}. Using the method of localization and perturbation we reduce the problems to the case of a flat interface, i.e.\ $\Omega=\R^n$, $\Sigma=\R^{n-1}\times \{0\}$. Then we may employ a partial Fourier-transform in the same way as in the proof of
\cite{PrSh12}, Theorem 3.1.
\end{proof}

\medskip

\noindent
{\bf 8.} \, Now suppose that $\lambda>0$ is an eigenvalue of $L$. We set $g=l_*j/\theta_*$, $g_1=-\sigma\cA_\Sigma h$ and $g_2= -l_*\vartheta=-c_* N_\lambda^Hj$, $c_*=l_*^2/\theta_*>0$ to obtain
$$j= -S_\lambda^{21}\sigma\cA_\Sigma h -c_*S_\lambda^{22}N_\lambda^Hj,\quad \lambda h = -S_\lambda^{11}\sigma\cA_\Sigma h -S_\lambda^{12}c_*N_\lambda^H j.$$
Observing that $I + c_* S_\lambda^{22}N_\lambda^H$ is injective, hence boundedly invertible in $L_{2,0}(\Sigma)$ by compactness of $S_\lambda$,
we may solve the first equation for $j$ to the result
$$ j = -\sigma(I + c_*S_\lambda^{22}N_\lambda^H)^{-1}S_\lambda^{21}\cA_\Sigma h.$$
Inserting this into the second, the equation for $h$ becomes
$$0=\lambda h + \sigma(S_\lambda^{11}- S_\lambda^{12}c_*N_\lambda^H(I + c_*S_\lambda^{22}N_\lambda^H)^{-1}S_\lambda^{21} )\cA_\Sigma h,$$
or equivalently with $R_\lambda=G_\lambda S^{21}_\lambda$
$$0=\lambda h+\sigma(N_\lambda^S+R^*_\lambda(c_*N_\lambda^H+G_\lambda)^{-1}R_\lambda)\cA_\Sigma h.$$
Next we observe that the operators $N_\lambda^S+R^*_\lambda(c_*N_\lambda^H+G_\lambda)^{-1}R_\lambda$ are injective for $\lambda\geq0$; in fact if
$$N_\lambda^Sh+R^*_\lambda(c_*N_\lambda^H+G_\lambda)^{-1}R_\lambda h=0,$$
then with $v_\lambda=(c_*N_\lambda^H+G_\lambda)^{-1}R_\lambda h$, forming the inner product with $h$ in $L_{2,0}(\Sigma)$ we obtain
\begin{align*}
0&=(N_\lambda^Sh|h)_{\Sigma}+(R^*_\lambda(c_*N_\lambda^H+G_\lambda)^{-1}R_\lambda h|h)_{\Sigma}\\
& \geq c|N_\lambda^Sh|^2_{\Sigma} + ((c_*N_\lambda^H+G_\lambda) v_\lambda|v_\lambda)_{\Sigma}
\end{align*}
hence $N_\lambda^Sh=0$ and $v_\lambda=0$, which implies $S_\lambda^{21}h=S_\lambda^{11} h=0$, hence $h=0$  by injectivity of $S_\lambda^{11}$.
Setting now
$$ T_\lambda =[N_\lambda^S+R_\lambda^*(c_*N_\lambda^H+G_\lambda)^{-1}R_\lambda]^{-1},$$
we arrive at the equation
\begin{equation}\label{Tlambda} \lambda T_\lambda h +\sigma \cA_\Sigma h =0.\end{equation}
$\lambda>0$ is an eigenvalue of $-L$ if and only if the problem \eqref{Tlambda} admits a nontrivial solution, i.e.\ if and only if $0$ is an eigenvalue for $B_\lambda:=\lambda T_\lambda +\sigma \cA_\Sigma$. Here the domain of $B_\lambda$ is that of $\cA_\Sigma$, $T_\lambda$ is a relatively compact perturbation of $\cA_\Sigma$.

We consider this problem in $L_{2,0}(\Sigma)$.
Then $\cA_\Sigma$ is selfadjoint and
$$\sigma(\cA_\Sigma h|h)_\Sigma \geq -\frac{\sigma(n-1)}{R^2} |h|^2_\Sigma.$$
On the other hand, $N_\lambda^H,N_\lambda^S$ are self-adjoint, compact and positive semidefinite on $L_{2,0}(\Sigma)$, and $G_\lambda$ is self-adjoint, positive definite, and has compact inverse. Hence $T_\lambda$ is self-adjoint, positive semidefinite and $T_\lambda^{-1}$ is compact as well. If $\mu>0$ is an eigenvalue of $T_\lambda$ then
$$\mu^{-1}h = T_\lambda^{-1} h = [S_\lambda^{11}- c_*S_\lambda^{12}N_\lambda^H(I + c_*S_\lambda^{22}N_\lambda^H)^{-1}S_\lambda^{21}]  h,$$
hence we get
$$ \mu^{-1}|h|_\Sigma \leq C|h|_\Sigma,$$
since by Propositions \ref{as-Stokes} and \ref{NHlambda},  $S_\lambda$, $N_\lambda^H$ and $(I+c_*S_\lambda^{22}N_\lambda^H)^{-1}$ are bounded in $L_{2,0}(\Sigma)$, uniformly for large $\lambda$. Therefore $\mu=\mu(\lambda)\geq c_0>0$, for large $\lambda$,
and so
$$(B_\lambda h|h)_\Sigma=\lambda (T_\lambda h|h)_\Sigma +\sigma (\cA_\Sigma h|h)_\Sigma \geq (c_0\lambda-\frac{\sigma(n-1)}{R^2})|h|_\Sigma ^2.$$
This proves that $B_\lambda$ is positive definite, hence \eqref{Tlambda} has no nontrivial solution, for large $\lambda$.

But for small $\lambda>0$ we have with $h=\sum_{k=1}^m \chi_{\Sigma_k}h_k$, where $\chi_A$ denotes characteristic function of a set $A$,
$h_k=\text{const.}$ on $\Sigma_k$,  $\sum_{k=1}^m h_k=0$,
$$\lambda(T_\lambda h|h) - \frac{\sigma(n-1)}{R^2}\omega_nR^{n-1} \sum_k h_k^2 <0,$$
since with $h\in H^{3/2}_2(\Sigma)\cap L_{2,0}(\Sigma)$
\begin{align*}
T_\lambda h&=\big(I-c_*[S_\lambda^{11}]^{-1}S_\lambda^{12}N_\lambda^H(I+c_*S_\lambda^{22}N_\lambda^H)^{-1}S_\lambda^{21}\big)^{-1}\\
&:=(I-K_\lambda)^{-1}[S_\lambda^{11}]^{-1}h\to T_0h
\end{align*}
in $L_{2,0}(\Sigma)$ as $\lambda\to0$. This can seen from the mapping and continuity properties of $S_\lambda$ and $N_\lambda^H$; $K_\lambda$ is compact and continuous in $\cB(L_{2,0}(\Sigma))$, and $I-K_\Lambda$ is injective, hence invertible and its inverse is continuous as well.
This shows that $B_\lambda$ has a nontrivial kernel for some $\lambda_0>0$, which implies that $-L$ has a positive eigenvalue.

Even more is true. We have seen that $B_\lambda$ is positive definite for large $\lambda$ and
$B_0=\sigma \cA_\Sigma$ has $-\sigma (n-1)/R_*^2$ as an eigenvalue of multiplicity $m-1$ in $L_{2,0}(\Sigma)$. Therefore, as $\lambda$ increases to infinity, $m-1$ eigenvalues $\mu_k(\lambda)$ of $B_\lambda$ must cross through zero, this way inducing $m-1$ positive eigenvalues of $-L$.

\medskip

\noindent
{\bf 9.} \, Next we look at the eigenvalue $\lambda=0$. Then \eqref{evid} yields
$$2|\mu_*^{1/2}D(u)|_2^2+|d_*^{1/2}\nabla\vartheta|_2^2=0,$$
hence  $\vartheta$ is constant, $D(u)=0$ and $j=0$ by the flux condition for $\vartheta$. This further implies that $[\![u]\!]=0$, and therefore Korn's inequality
yields $\nabla u=0$ and then we have $u=0$ by the no-slip condition on $\partial \Omega$. This implies further that the pressures are constant in the phases and $[\![\pi]\!]=-\sigma \cA_\Sigma h$, as well as $l_*\vartheta= -[\![\pi/\rho]\!]$. Thus the dimension of the eigenspace for eigenvalue $\lambda=0$ is the same as the dimension of the manifold of equilibria, namely $m\cdot n +2$ if $\Omega_1$ has $m\geq1$ components.
The kernel of $L$ is spanned by $e_\theta=(0,1,0), e_h=(0,0,1), e_{jk}=(0,0,Y_j^k)$  with the spherical harmonics $Y_j^k$ of degree one for the spheres $\Sigma^k$, $j=1,\ldots,n$, $k=1,\ldots,m$.

To show that the equilibria are normally stable or hyperbolic, it remains to prove that $\lambda=0$ is semi-simple.
So suppose we have a solution of $L(u,\vartheta,h)= \sum_{j,k}\alpha_{jk}e_{jk} + \beta e_\theta +\gamma e_h$. This means
\begin{align}
-\mu_* \Delta u +\nabla\pi =0,\quad &  x\in \Omega\setminus\Sigma,\nn\\
{\rm div}\, u =0,\quad & x\in \Omega\setminus\Sigma,\nn\\
[\![u]\!] = [\![\rho^{-1}]\!] j \nu_\Sigma,\quad &  x\in \Sigma,\\
 -[\![T(u,\pi,\vartheta) \nu_\Sigma]\!] = -\sigma \cA_\Sigma h\nu_\Sigma,\quad &  x\in \Sigma,\nn\\
u=0,\quad &  x\in \partial\Omega.\nn
\end{align}
\begin{align}
 -d_*\Delta \vartheta =\beta,\quad &  x\in \Omega\setminus\Sigma,\nn\\
[\![\vartheta]\!]=0, \quad &  x\in \Sigma,\nn\\
-(l_*/\theta_*)j -[\![d_*\partial_{\nu_\Sigma}\vartheta]\!]= 0,\quad &  x\in \Sigma,\\
\partial_\nu\vartheta=0,\quad &  x\in \partial\Omega.\nn
\end{align}
\begin{align}
[\![-\rho^{-1}T(u,\pi,\vartheta) \nu_\Sigma\cdot\nu_\Sigma]\!] +l_*\vartheta = 0,\quad &  x\in \Sigma,\\
 -[\![\rho u\cdot \nu_\Sigma]\!]/[\![\rho]\!] = \sum_{j,k}{\alpha_j}Y_j^k +\gamma,\quad & t>0,\; x\in \Sigma.\nn
\end{align}
We have to show $\alpha_{jk}=\beta=\gamma=0$ for all $j$.
Integrating the divergence equation for $u$ over $\Omega$ we get $\int_\Sigma j d\Sigma=0$, and integrating that for $\vartheta$ yields
$$\beta|\Omega|=  - \int_\Omega d_*\Delta \vartheta dx = \int_\Sigma [\![d_*\partial_{\nu_\Sigma} \vartheta]\!] d\Sigma
= -(l_*/\theta_*)\int_\Sigma jd\Sigma=0,$$
which implies $\beta=0$.
As in 3.\ we then  obtain
$$2|\mu_*^{1/2}D(u)|_2^2 +([\![\rho^{-1}T(u,\pi,\vartheta) \nu_\Sigma\cdot\nu_\Sigma]\!]|\, j)_\Sigma=0,$$
and
$$ \theta_*|d_*^{1/2}\nabla\vartheta|_2^2 - (j|[\![\rho^{-1}T(u,\pi,\vartheta) \nu_\Sigma\cdot\nu_\Sigma]\!])_\Sigma=\beta(\vartheta|1)_\Omega.$$
Adding these equations yields
$$2|\mu_*^{1/2}D(u)|_2^2 + \theta_*|d_*^{1/2}\nabla\vartheta|_2^2=0,$$
This implies $D(u)=0$, $\vartheta$ constant, hence $j=0$ since $l_*\neq0$, and then $u=0$ by Korn's inequality, which in turn yields
$$ 0= -[\![\rho u\cdot \nu_\Sigma]\!]/[\![\rho]\!]=\sum_{j,k}\alpha_{jk}Y_j^k+\gamma.$$
Thus $\gamma=\alpha_{jk}=0$ for all $j,k$ since the spherical harmonics $Y_j^k$ are independent and have mean zero.
Therefore eigenvalue $\lambda=0$ is semi-simple.

\medskip

\noindent
{\bf 10.} \, Let us consider now the case $l_*=0$. Then the temperature equation decouples completely from that for $u$ and $h$. It only induces one dimension in the kernel of $L$, but no positive eigenvalues. In this case, as now $c_*=0$ the derivation in 8.\ yields the equivalent problem
$$\lambda h + \sigma S^{11}_\lambda\cA_\Sigma h=0.$$
As $S^{11}_\lambda$ is positive semidefinite and injective, this equation admits no nontrivial solutions if ${\rm Re}\, \lambda\geq$ and $\lambda$ is non-real. If $\lambda>0$ then $T_\lambda= [S^{11}_\lambda]^{-1}$ and may employ the same arguments as in 8.\
to obtain the same conclusions as in case $l_*\neq0$. Finally, for $\lambda=0$ we have the same kernel of $L$ as before, and $0$ is semi-simple for $L$, too.

\medskip

\noindent
{\bf 11.} \, Let us summarize what we have proved.

\begin{theorem}\label{propertiesL} Let $L$ denote the linearization at  $e_*:=(0,\theta_*,\Sigma)\in\cE$ as defined above.
Then $-L$ generates a compact analytic $C_0$-semigroup in $X_0$ which has maximal $L_p$-regularity. The spectrum of $L$ consists only of eigenvalues of finite algebraic multiplicity. Moreover, the following assertions are valid.

\medskip

\noindent
{\bf (i)} The operator $-L$ has no eigenvalues $\lambda\neq0$ with nonnegative real part if and only if $\Sigma$ is connected.\\
{\bf (ii)} If $\Sigma$ is disconnected, then $-L$ has precisely $m-1$ positive eigenvalues.\\
{\bf (iii)} $\lambda=0$ is an eigenvalue of $L$ and it is semi-simple.\\
{\bf (iv)} The kernel $N(L)$ of $L$ is isomorphic to the tangent space $T_{e_*}\cE$ of the manifold of equilibria $\cE$ at $e_*$.

\medskip

\noindent
Consequently, $e_*=(0,\theta_*,\Sigma)\in\cE$ is normally stable if and only if  $\Sigma$ is connected,
and normally hyperbolic if and only if  $\Sigma$ is disconnected.
\end{theorem}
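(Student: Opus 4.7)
The plan is to organize the detailed derivation carried out in steps 1 through 10 above into a coherent proof of the four-part assertion. I would first invoke Theorem~\ref{th:3.1} together with maximal $L_p$-regularity of the decoupled temperature problem to conclude that $-L$ generates an analytic $C_0$-semigroup on $X_0$ with maximal $L_p$-regularity; since the embedding $X_1 \hookrightarrow X_0$ is compact the semigroup and resolvent are compact, so $\sigma(L)$ consists of countably many eigenvalues of finite algebraic multiplicity, independent of $p$.

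For assertion (i), in the case $l_*\neq 0$ I would first derive the key energy identity of step 3 by testing the eigenvalue equations against $(u,\vartheta)$, integrating by parts, and exploiting the interface conditions. Taking real and imaginary parts separately then forces ${\rm Re}\,\lambda \geq 0 \Rightarrow \lambda \in \mathbb{R}$, and if in addition $\lambda > 0$ with $\Sigma$ connected, integration of the divergence condition and of the heat equation over $\Omega$ yields $\bar h = 0$ and $(\vartheta\mid\rho\kappa_*) = 0$; positive semidefiniteness of $\cA_\Sigma$ on $L_{2,0}(\Sigma)$ then closes the argument. The case $l_*=0$ from step 10 reduces to $\lambda h + \sigma S_\lambda^{11}\cA_\Sigma h = 0$ with $S_\lambda^{11}$ positive and injective on $L_{2,0}(\Sigma)$, handled verbatim.

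The central obstacle is the precise count in assertion (ii). Here I would rely on the mapping properties of the Neumann-to-Dirichlet operators $N_\lambda^H, N_\lambda^S$ and of the asymmetric Stokes operator $S_\lambda$ from Propositions~\ref{NHlambda}--\ref{as-Stokes} to eliminate $j$ and $\vartheta$ through the interface conditions, reducing the eigenvalue problem to the spectral equation $B_\lambda h := \lambda T_\lambda h + \sigma \cA_\Sigma h = 0$ on $L_{2,0}(\Sigma)$, where $T_\lambda = [N_\lambda^S + R_\lambda^*(c_* N_\lambda^H + G_\lambda)^{-1}R_\lambda]^{-1}$ is self-adjoint, positive, with compact inverse. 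Uniform upper bounds on $S_\lambda, N_\lambda^H$ for large $\lambda$ give a lower bound $\mu(\lambda) \geq c_0 > 0$ on the spectrum of $T_\lambda$, so that $B_\lambda$ is positive definite for large $\lambda$; on the other hand $B_0 = \sigma\cA_\Sigma$ carries the eigenvalue $-\sigma(n-1)/R_*^2$ of multiplicity $m-1$ on piecewise-constant mean-zero functions $\sum_k \chi_{\Sigma_k} h_k$. A continuity/homotopy argument on the eigenvalue branches of $B_\lambda$ as $\lambda$ runs from $0$ to $\infty$ then forces exactly $m-1$ zero crossings, yielding the claimed $m-1$ positive eigenvalues of $-L$.

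For assertions (iii) and (iv) I would solve $Lz = \sum_{j,k}\alpha_{jk}e_{jk} + \beta e_\theta + \gamma e_h$ as in step 9: integrating the divergence condition and the heat equation over $\Omega$ yields $\bar j = 0$ and $\beta = 0$; reapplying the energy identity then forces $D(u) = 0$, $\vartheta$ constant, $j = 0$ and so, via Korn's inequality and the no-slip condition, $u = 0$. The remaining identity $-[\![\rho u \cdot \nu_\Sigma]\!]/[\![\rho]\!] = \sum_{j,k}\alpha_{jk}Y_j^k + \gamma$ reduces to $\sum \alpha_{jk}Y_j^k + \gamma = 0$, whence $\gamma = 0$ and $\alpha_{jk} = 0$ for all $j,k$ by linear independence of the spherical harmonics of degree one on each component $\Sigma^k$ together with their vanishing mean. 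Hence $0$ is semi-simple and $N(L)$ is spanned by $e_\theta, e_h$ and the $mn$ generators $e_{jk}$, matching the $(mn+2)$-dimensional tangent space $T_{e_*}\cE$. The normal stability/hyperbolicity dichotomy then follows directly: connectedness of $\Sigma$ rules out any spectrum in the closed right half-plane outside the semi-simple kernel, while disconnectedness produces $m-1$ positive eigenvalues and no purely imaginary ones.
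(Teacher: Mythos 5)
Your proposal is correct and follows essentially the same route as the paper: the compactness/maximal regularity setup, the energy identities ruling out non-real and (for connected $\Sigma$) positive eigenvalues, the reduction via $N_\lambda^H$, $N_\lambda^S$, $S_\lambda$ to the spectral problem $\lambda T_\lambda h+\sigma\cA_\Sigma h=0$ with the eigenvalue-crossing count of $m-1$, and the semi-simplicity and kernel computation for $\lambda=0$. No substantive differences from the argument given in Section 4, steps 1--10.
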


\bigskip

\section{Nonlinear Stability of Equilibria}
\noindent
{\bf 1.} \, We look at Problem \eqref{NS}, \eqref{Heat}, \eqref{GTS} in the neighborhood of a non-degenerate equilibrium $e_*=(0,\theta_*,\Gamma_*)\in\cE$. Performing a Hanzawa transform with reference manifold $\Sigma=\Gamma_*$ as in Section 3, the transformed problem becomes
\begin{align}\label{enonlin-u}
\rho\partial_t u-\mu_* \Delta u +\nabla\pi &=\rho F_u(u,\vartheta,h,\pi),\quad & \mbox{ in }\Omega\setminus{\Sigma},\nonumber\\
{\rm div}\, u &=G_d(u,h),\quad &  \mbox{ in } \Omega\setminus{\Sigma},\nonumber\\
P_\Sigma[\![u]\!] &= G_u(u,\vartheta,h),\quad & \mbox{ on }\Sigma,\nonumber\\
-P_\Sigma[\![\mu_* (\nabla u +[\nabla u]^{\sf T})]\!]\nu_\Sigma &= G_\tau(u,\vartheta,h),\quad & \mbox{ on } {\Sigma},\\
 -[\![T(u,\pi,\vartheta) \nu_\Sigma\cdot\nu_\Sigma]\!] +\sigma \cA_\Sigma h &=G_\nu(u,\vartheta,h)+G_\gamma(h),\quad & \mbox{ on } {\Sigma},\nonumber\\
l_*\vartheta -[\![T(u,\pi,\vartheta) \nu_\Sigma\cdot\nu_\Sigma/\rho]\!] &= G_h(u,\vartheta,h),\quad & \mbox{ on } {\Sigma},\nonumber\\
u&=0,\quad & \mbox{ on }\partial\Omega,\nonumber\\
u(0)&=u_0,\quad & \mbox{ in } \Omega,\nonumber
\end{align}
where $\mu_*=\mu(\theta_*)>0$ and $\cA_\Sigma =-H^\prime(0)=-(n-1)/R_*^2 -\Delta_{\Sigma}$.\\
For the relative temperature $\vartheta=(\theta-\theta_*)/\theta_*$ we obtain
\begin{align}\label{enonlin-theta}
\rho\kappa_*\partial_t\vartheta -d_*\Delta \vartheta &=F_\theta(u,\vartheta,h),\quad & \mbox{ in } \Omega\setminus{\Sigma},\nonumber\\
[\![\vartheta]\!]&=0, \quad &  \mbox{ on }{\Sigma},\nonumber\\
-(l_*/\theta_*)[\![u\cdot\nu_\Sigma]\!]/[\![1/\rho]\!] -[\![d_*\partial_{\nu_\Sigma}\vartheta]\!]&= G_\theta(u,\vartheta,h),\quad & \mbox{ on } {\Sigma},\\
\partial_\nu\vartheta&=0,\quad & \mbox{ on }\partial\Omega,\nonumber\\
\vartheta(0)&=\vartheta_0,\quad & \mbox{ in } \Omega,\nonumber
\end{align}
with $\kappa_*=\kappa(\theta_*)>0$ and $l_*=l(\theta_*)\neq0$. Finally, the evolution of $h$ is determined by
\begin{align}\label{enonlin-h}
\partial_t h -[\![\rho u\cdot\nu_\Sigma]\!]/[\![\rho]\!]&=F_h(u,h),\quad & \mbox{ on } {\Sigma},\\
h(0)&=h_0.&\nonumber
\end{align}
Here the nonlinearities are of class $C^1$ from $\EE$ to $\FF$, and satisfy $F_j^\prime(0)=G_k^\prime(0)=0$ for all $j=u,\theta,h$ and $k=d,u,\tau,\nu,\gamma,h,\theta$.
Let $w:=(z,\pi):=(u,\vartheta,h,\pi)$ and $z_0:=(u_0,\vartheta_0,h_0)$. We will frequently make use of the shorter notation
\begin{equation*}
\LL w=N(w),\quad z(0)=z_0
\end{equation*}
for the system \eqref{enonlin-u}-\eqref{enonlin-h}.

The state manifold locally near the equilibrium $e_*=(0,\theta_*,\Gamma_*)$  reads as
\begin{align}\label{phasemanifeq}
\cSM:=&\Big\{(u,\vartheta,h)\in L_p(\Omega)^{n+1}\times C^2(\Sigma):
 (u,\vartheta)\in W^{2-2/p}_p(\Omega\setminus\Sigma)^{n+1},\, h\in W^{3-2/p}_p,\nonumber\\
 & P_\Sigma[\![u]\!] = G_u(u,\vartheta,h),\; -P_\Sigma[\![\mu_*(\nabla u+[\nabla u]^{\sf T})]\!]\nu_\Sigma
 =G_\tau(u,\vartheta,h) \mbox{ on }\; \Sigma,\nonumber\\
 &{\rm div}\, u=G_d(u,h)\; \mbox{ in }\; \Omega\setminus\Sigma,\quad u=\partial_\nu\theta =0 \mbox{ on } \partial\Omega,\\
 &[\![\vartheta]\!]=0,\; -(l_*/\theta_*)[\![u\cdot\nu_\Sigma]\!]/[\![1/\rho]\!]-[\![d_*\partial_{\nu_\Sigma} \vartheta]\!] = G_\theta(u,\vartheta,h)\mbox{ on }\Sigma\Big\}.\nonumber
\end{align}
Note that due to the compatibility conditions this is a nonlinear manifold. We need to parameterize this manifold over its tangent space
\begin{eqnarray}\label{tangphasemanifeq}
\cSX:=&&\hspace{-0.5cm}\Big\{(u,\vartheta,h)\in L_p(\Omega)^{n+1}\times C^2(\Sigma):
 (u,\vartheta)\in W^{2-2/p}_p(\Omega\setminus\Sigma)^{n+1},\, h\in W^{3-2/p}_p,\nonumber\\
 && P_\Sigma[\![u]\!]=0,\; -P_\Sigma[\![\mu_*(\nabla u+[\nabla u]^{\sf T})]\!]\nu_\Sigma =0 \mbox{ on }\; \Sigma,\nonumber\\
 &&{\rm div}\, u=0\; \mbox{ in }\; \Omega\setminus\Sigma,\quad u=\partial_\nu\theta =0 \mbox{ on } \partial\Omega,\\
 &&[\![\vartheta]\!]=0,\; -(l_*/\theta_*)[\![u\cdot\nu_\Sigma]\!]/[\![1/\rho]\!]-[\![d_*\partial_{\nu_\Sigma} \vartheta]\!]=0 \mbox{ on }\Sigma\Big\}.\nonumber
\end{eqnarray}
We mention that the norm in $\cSX$ is given by
$$|(u,\vartheta,h)|_{\cSX}= |u|_{W^{2-2/p}_p}+|\vartheta|_{W^{2-2/p}_p}+|h|_{W^{3-2/p}_p}.$$

\medskip

\noindent
{\bf 2.} \, To parameterize the state manifold $\cSM$ over $\cSX$ near the given equilibrium we consider the linear elliptic problem
\begin{align}\label{lin-param}
\rho\omega u -\mu_*\Delta u +\nabla\pi&=0\quad & \mbox{ in }\Omega\setminus\Sigma,\nonumber\\
{\rm div}\, u&= g_d\quad & \mbox{ in }\Omega\setminus\Sigma,\nonumber\\
\rho\kappa_*\omega \vartheta-d_*\Delta \vartheta&=0\quad & \mbox{ in } \Omega\setminus\Sigma,\nonumber\\
u=\partial_{\nu} \vartheta &=0\quad &  \mbox{ on } \partial \Omega,\nonumber\\
P_\Sigma[\![u]\!]&=g_u\quad &  \mbox{ on }\Sigma,\\
-P_\Sigma[\![\mu_*(\nabla u +[\nabla u]^{\sf T})]\!]\nu_\Sigma &=g_\tau\quad & \mbox{ on }\Sigma,\nonumber\\
[\![\vartheta]\!]&=0\quad & \mbox{ on }\Sigma,\nonumber\\
 -(l_*/\theta_*)[\![u\cdot\nu_\Sigma]\!]/[\![1/\rho]\!] -[\![d_*\partial_{\nu_\Sigma} \vartheta]\!]&=g_\theta\quad &  \mbox{ on }\Sigma,\nonumber\\
-[\![T(u,\pi,\vartheta) \nu_\Sigma\cdot\nu_\Sigma]\!] &=g_\nu\quad & \mbox{ on }\Sigma,\nonumber\\
l_* \vartheta -[\![T(u,\pi,\vartheta) \nu_\Sigma\cdot\nu_\Sigma/\rho]\!] &=0\quad & \mbox{ on }\Sigma,\nonumber
\end{align}
for given data $g_d,g_u,g_\tau,g_\nu,g_\theta$. For this problem we have the following result.
\begin{proposition}\label{param} Suppose $p>3$ and let $\omega>0$ be sufficiently large. Then problem \eqref{lin-param}
admits a unique solution $(u,\pi,\vartheta,h)$ with regularity
$$ (u,\vartheta)\in W^{2-2/p}_p(\Omega\setminus\Sigma)^{n+1}, \quad \pi\in W^{1-2/p}_p(\Omega\setminus\Sigma),$$
if and only if the data $(g_d,g_u,g_\tau,g_\nu,g_\theta)$ satisfy $g_u\cdot\nu_\Sigma=g_\tau\cdot\nu_\Sigma\equiv0$ and
$$g_d\in W^{1-2/p}_p(\Omega\setminus\Sigma)\cap \dot{H}^{-1}_p(\Omega),\quad
(g_\tau,g_\nu,g_\theta)\in W^{1-3/p}_p(\Sigma)^{n+2},\quad g_u\in W^{2-3/p}_p(\Sigma).$$
The solution map $(f_d,g_u,g_\tau,g_\nu,g_\theta)\mapsto(u,\pi,\vartheta)$ is continuous in the corresponding spaces.
\end{proposition}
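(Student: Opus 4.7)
The plan is to regard \eqref{lin-param} as the stationary, spatial part of the linearization treated in Section~4, with the time derivative $\partial_t$ formally replaced by the spectral parameter $\omega$. Once the inhomogeneous boundary and interface data are eliminated by a suitable extension, the remaining problem is precisely the resolvent equation for the operator $L$ defined in Section~4. Since $-L$ generates a compact analytic $C_0$-semigroup on $X_0$ (Theorem~\ref{propertiesL}), its spectrum consists of discrete eigenvalues contained in a half-plane $\{\mathrm{Re}\,\lambda\le\omega_0\}$, so for $\omega>\omega_0$ the resolvent $(\omega+L)^{-1}$ is a bounded operator from $X_0$ onto $D(L)$. The target regularity $(u,\vartheta)\in W^{2-2/p}_p(\Omega\setminus\Sigma)^{n+1}$ and $\pi\in W^{1-2/p}_p(\Omega\setminus\Sigma)$ is precisely the time-trace scale associated with the maximal $L_p$-regularity class of Theorem~\ref{th:3.1}, hence natural for this elliptic transmission problem.

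The reduction to homogeneous interface data proceeds in four steps, tracked in the $W^{2-2/p}_p$/$W^{1-2/p}_p$ scale. First, I would use a Bogovskii/divergence-corrector construction on $\Omega_1$ and $\Omega_2$ separately to find $u_d\in W^{2-2/p}_p(\Omega\setminus\Sigma)^n$ with $\mathrm{div}\,u_d=g_d$, $u_d|_{\partial\Omega}=0$, and $[\![u_d]\!]=0$; the compatibility $g_d\in\dot H^{-1}_p(\Omega)$ ensures solvability. Second, using $g_u\cdot\nu_\Sigma=0$ and $g_u\in W^{2-3/p}_p(\Sigma)$, I would solve a stationary two-phase Stokes problem to absorb the tangential jump data $g_u$ and the tangential stress data $g_\tau$; this is a non-coupled subproblem amenable to the localization/flat-interface argument used in the proof of Theorem~\ref{th:3.1} and Proposition~\ref{as-Stokes}. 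Third, I would extend the temperature datum via a two-phase elliptic problem (with parameter $\omega$) absorbing $g_\theta$ and enforcing $[\![\vartheta]\!]=0$. Finally, the normal-stress datum $g_\nu$ together with the remaining pressure balance is absorbed through the asymmetric Stokes construction of Proposition~\ref{as-Stokes}, using the boundedness of the operators $S_\lambda^{ij}$. What remains after all four reductions is a problem with zero divergence, zero jump, zero tangential stress and homogeneous Stefan flux, which is precisely of the form $(\omega+L)z=f$ with $f\in X_0$, solvable by the first paragraph.

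The main technical obstacle is the last interface condition $l_*\vartheta-[\![T(u,\pi,\vartheta)\nu_\Sigma\cdot\nu_\Sigma/\rho]\!]=0$, which couples the pressure jump, the normal gradient of $u$, and the temperature in a Robin-type fashion and cannot be decoupled by a pure Dirichlet or Neumann reduction. This is exactly the feature that necessitates the asymmetric Stokes analysis carried out in Proposition~\ref{as-Stokes}, and the extension strategy above is chosen so that this coupled condition is preserved at each step and only solved once, via $(\omega+L)^{-1}$. Uniqueness follows directly from the invertibility of $\omega+L$ on $X_0$ for $\omega$ large, and the continuous dependence claim is a consequence of the closed graph theorem applied to the composition of the (continuous) extension operators and the bounded resolvent. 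The necessity of the stated regularity and the compatibility $g_u\cdot\nu_\Sigma=g_\tau\cdot\nu_\Sigma=0$ is immediate from trace theory and the identity $P_\Sigma^2=P_\Sigma$.
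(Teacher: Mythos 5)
Your central reduction does not go through: the problem left over after your extensions is \emph{not} the resolvent equation for the operator $L$ of Section~4. In \eqref{lin-param} there is no height function at all and the normal stress jump is prescribed data ($-[\![T(u,\pi,\vartheta)\nu_\Sigma\cdot\nu_\Sigma]\!]=g_\nu$), whereas in the functional-analytic setting of Section~4 the pressure is determined through the transmission problem with $[\![\pi]\!]=-\sigma\cA_\Sigma h+2[\![\mu_*(D(u)\nu_\Sigma|\nu_\Sigma)]\!]$, where $h$ is an additional unknown coupled through its own equation $\omega h-[\![\rho u\cdot\nu_\Sigma]\!]/[\![\rho]\!]=f_h$. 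After you have absorbed $g_\nu$ you need a solution with \emph{zero} normal stress jump, which $(\omega+L)^{-1}$ does not produce (it yields a jump equal to $-\sigma\cA_\Sigma h$ with $h$ determined by the resolvent). To argue as you propose you would have to introduce and invert a different operator -- the Stokes--heat system with prescribed normal stress jump and the Robin-type coupling $l_*\vartheta-[\![T\nu_\Sigma\cdot\nu_\Sigma/\rho]\!]=0$, without $h$ -- and the invertibility of exactly that operator is the content of the proposition; the detour through $L$ is circular. In addition, Proposition~\ref{as-Stokes}, which you invoke to absorb $g_\nu$, is an $L_2(\Sigma)$-theory built for the spectral analysis and provides no mapping properties in the $W^{1-3/p}_p(\Sigma)$ classes required here.

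There is also a regularity mismatch that blocks the resolvent step even structurally. The data classes in the proposition ($g_u\in W^{2-3/p}_p(\Sigma)$, $(g_\tau,g_\nu,g_\theta)\in W^{1-3/p}_p(\Sigma)$, $g_d\in W^{1-2/p}_p\cap\dot H^{-1}_p$) are precisely the trace classes of $W^{2-2/p}_p(\Omega\setminus\Sigma)$, one notch below the maximal-regularity level at which $L$ acts; any extensions compatible with these data (your divergence corrector, jump and flux liftings) are only of class $W^{2-2/p}_p$ in the bulk, so the interior right-hand sides they generate lie in $W^{-2/p}_p(\Omega\setminus\Sigma)$ and not in $X_0=L_{p,\sigma}(\Omega)\times L_p(\Omega)\times W^{2-1/p}_p(\Sigma)$; hence the bounded resolvent on $X_0$ cannot be applied, and one would either have to work in extrapolation/interpolation scales (which you do not set up) or solve the problem directly at the lower Besov level. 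The paper's proof takes the latter route: the stationary problem with large parameter $\omega$ is solved by the same localization, perturbation and flat-interface (Lopatinskii--Shapiro) analysis as the parabolic problem in Theorem~\ref{th:3.1} and Section~7, which handles the coupled condition $l_*\vartheta-[\![T\nu_\Sigma\cdot\nu_\Sigma/\rho]\!]=0$ at the symbol level, yields both implications of the equivalence, and gives the continuity of the solution map directly. Your necessity remark ($g_u\cdot\nu_\Sigma=g_\tau\cdot\nu_\Sigma=0$ from $P_\Sigma^2=P_\Sigma$ and trace theory) is fine.
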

This purely elliptic problem can be solved in the same way as the corresponding linear parabolic problems.

\medskip

\noindent
{\bf 3.} \, For the parametrization, fix any large $\omega>0$. Given $(\tilde{u},\tilde{\vartheta},\tilde{h})\in \cSX$ sufficiently small, and setting
 $(u,\vartheta,h)=(\tilde{u},\tilde{\vartheta},\tilde{h})+(\bar{u},\bar{\vartheta},0)$, solve the auxiliary problem
\begin{align*}
\rho\omega \bar{u} -\mu_*\Delta\bar{u} +\nabla\bar{\pi}&=0,\quad & \mbox{ in } \Omega\setminus\Sigma,\nonumber\\
{\rm div}\, \bar{u}&= G_d(u,h),\quad & \mbox{ in } \Omega\setminus\Sigma,\nonumber\\
\rho\kappa_*\omega \bar{\vartheta}-d_*\Delta \bar{\vartheta}&=0,\quad & \mbox{ in } \Omega\setminus\Sigma,\nonumber\\
\bar{u}=\partial_{\nu} \bar{\vartheta} &=0,\quad &  \mbox{ on }\partial \Omega,\nonumber\\
P_\Sigma[\![\bar{u}]\!]&=G_u(u,\vartheta,h),\quad &  \mbox{ on }\Sigma,\\
-P_\Sigma[\![\mu_*(\nabla\bar{u} +[\nabla\bar{u}]^{\sf T})]\!]\nu_\Sigma &=G_\tau(u,\vartheta,h),\quad &  \mbox{ on }\Sigma,\nonumber\\
[\![\bar{\vartheta}]\!]&=0\quad& \mbox{ on } \Sigma\nonumber\\
 -(l_*/\theta_*)[\![u\cdot\nu_\Sigma]\!]/[\![1/\rho]\!]-[\![d_*\partial_{\nu_\Sigma} \bar{\vartheta}]\!]
 &=G_\theta(u,\vartheta,h),\quad & \mbox{ on } {\Sigma},\nonumber\\
-[\![{T}(\bar{u},\bar{\pi},\bar{\vartheta}) \nu_\Sigma\cdot\nu_\Sigma]\!]&=-\sigma \cA_\Sigma h,
\quad &  \mbox{ on }\Sigma,\nonumber\\
l_*\vartheta-[\![{T}(\bar{u},\bar{\pi},\bar{\vartheta})  \nu_\Sigma\cdot\nu_\Sigma/\rho]\!]&= 0,
\quad &  \mbox{ on }\Sigma,\nonumber
\end{align*}
by means of  the implicit function theorem, employing Proposition \ref{param}. Then with $\bar{z}=(\bar{u},\bar{\vartheta},0)$ and $\tilde{z}=(\tilde{u},\tilde{\vartheta},\tilde{h})$, $z=\tilde{z}+\bar{z}$,
we obtain  $\bar{z}=\phi(\tilde{z})$, with a $C^1$-function $\phi$ such that $\phi(0)=\phi^\prime(0)=0$. Then $z=\tilde{z}+ \phi(\tilde{z})\in \cSM$, hence $\cSM$ is locally near $0$ parameterized over $\cSX$. To prove surjectivity of this map, given $(u,\vartheta,h)\in \cSM$  sufficiently small,
solve the linear problem
\begin{align*}
\rho\omega \bar{u} -\mu_*\Delta\bar{u} +\nabla\bar{\pi}&=0,\quad & \mbox{ in } \Omega\setminus\Sigma,\nonumber\\
{\rm div}\, \bar{u}&= G_d(u,h),\quad & \mbox{ in } \Omega\setminus\Sigma,\nonumber\\
\rho\kappa_*\omega \bar{\vartheta}-d_*\Delta \bar{\vartheta}&=0,\quad & \mbox{ in } \Omega\setminus\Sigma,\nonumber\\
\bar{u}=\partial_{\nu} \bar{\vartheta} &=0,\quad &  \mbox{ on }\partial \Omega,\nonumber\\
P_\Sigma[\![\bar{u}]\!]&=G_u(u,\vartheta,h),\quad &  \mbox{ on }\Sigma,\\
-P_\Sigma[\![\mu_*(\nabla\bar{u} +[\nabla\bar{u}]^{\sf T})]\!]\nu_\Sigma &=G_\tau(u,\vartheta,h),\quad &  \mbox{ on }\Sigma,\nonumber\\
[\![\bar{\vartheta}]\!]&=0\quad& \mbox{ on } \Sigma\nonumber\\
-(l_*/\theta_*)[\![u\cdot\nu_\Sigma]\!]/[\![1/\rho]\!]-[\![d_*\partial_{\nu_\Sigma} \bar{\vartheta}]\!]
&=G_\theta(u,\vartheta,h),\quad & \mbox{ on } {\Sigma},\nonumber\\
-[\![{T}(\bar{u},\bar{\pi},\bar{\vartheta})  \nu_\Sigma\cdot\nu_\Sigma]\!]&=-\sigma \cA_\Sigma h,
\quad &  \mbox{ on }\Sigma,\nonumber\\
l_*\vartheta-[\![{T}(\bar{u},\bar{\pi},\bar{\vartheta})  \nu_\Sigma\cdot\nu_\Sigma/\rho]\!]&= 0,
\quad &  \mbox{ on }\Sigma,\nonumber
\end{align*}
and set $\tilde{z}=z-\bar{z}$. Then we see that $\bar{z}=\phi(\tilde{z})$, hence the map $\tilde{z}\mapsto \tilde{z}+\phi(\tilde{z})$ is also surjective near $0$.

\medskip

\noindent
{\bf 4.} \, Next, let $w_\infty:=(z_\infty,\pi_\infty):=(u_\infty,\vartheta_\infty,h_\infty,\pi_\infty)$ be an equilibrium of $\eqref{enonlin-u}-\eqref{enonlin-h}$ such that $z_\infty=\tilde{z}_\infty+\phi(\tilde{z}_\infty)\in\cSM$ is close to the part $z_*=(0,0,0)$ of the fixed equilibrium $w_*=(0,0,0,\pi_*)$. Clearly, $u_\infty=0$, $\vartheta_\infty=\text{const.}$, $\pi_*,\pi_\infty=\text{const.}$ and it holds that $\mathbb{L}w_\infty=N(w_\infty)$.

Given a solution $w\in\EE$ of Problem \eqref{enonlin-u}-\eqref{enonlin-h},
we decompose $w$ as $w=\tilde{w}+\bar{w}+w_\infty$,
where now $\bar{w}$ solves the following auxiliary problem
\begin{align}\label{nonlin-param}
\omega\rho \bar{u} +\rho\partial_t\bar{u}-\mu_*\Delta\bar{u} +\nabla\bar{\pi}&=\rho F_u(u,\vartheta,h,\pi),\quad & \mbox{ in } \Omega\setminus\Sigma,\nonumber\\
{\rm div}\, \bar{u}&= G_d(u,h),\quad & \mbox{ in } \Omega\setminus\Sigma,\nonumber\\
\rho\kappa_*\omega \bar{\vartheta}+\rho\kappa_*\partial_t \bar{\vartheta}-d_*\Delta \bar{\vartheta}&=F_\theta(u,\vartheta,h),\quad & \mbox{ in } \Omega\setminus\Sigma,\nonumber\\
\bar{u}=\partial_{\nu} \bar{\vartheta} &=0,\quad &  \mbox{ on }\partial \Omega,\nonumber\\
P_\Sigma[\![\bar{u}]\!]&=G_u(u,\vartheta,h),\quad &  \mbox{ on }\Sigma,\\
-P_\Sigma[\![\mu_*(\nabla\bar{u} +[\nabla\bar{u}]^{\sf T})]\!]\nu_\Sigma &=G_\tau(u,\vartheta,h),\quad &  \mbox{ on }\Sigma,\nonumber\\
[\![\bar{\vartheta}]\!]&=0\quad &\mbox{ on } \Sigma\nn\\
-(l_*/\theta_*)[\![\bar{u}\cdot\nu_\Sigma]\!]/[\![1/\rho]\!]-[\![d_*\partial_{\nu_\Sigma} \bar{\vartheta}]\!]&=G_\theta(u,\vartheta,h),\quad & \mbox{ on } {\Sigma},\nonumber \\
-[\![{T}(\bar{u},\bar{\pi},\bar{\vartheta})  \nu_\Sigma\cdot\nu_\Sigma]\!]+\sigma \cA_\Sigma \bar{h}&=G_\nu(u,\vartheta,h) +G_\gamma(h)-G_\gamma(h_\infty),
\quad &  \mbox{ on }\Sigma,\nonumber\\
l_*\vartheta-[\![{T}(\bar{u},\bar{\pi},\bar{\vartheta})  \nu_\Sigma\cdot\nu_\Sigma/\rho]\!]&= G_h(u,\vartheta,h),
\quad &  \mbox{ on }\Sigma,\nonumber\\
\omega \bar{h}+\partial_t\bar{h}-[\![\rho \bar{u}\cdot\nu_\Sigma]\!]/[\![\rho]\!]&=F_h(u,h),\quad & \mbox{ on } {\Sigma},\nonumber\\
\bar{u}(0)=\bar{u}_0,\quad\bar{\vartheta}(0)=\bar{\vartheta}_0,\quad \bar{h}(0)&=\bar{h}_0, \nonumber
\end{align}
where $z_0=\tilde{z}_0+\bar{z_0}=\tilde{z}_0+ \phi(\tilde{z}_0)$.
 The remaining problem for
$\tilde{w}$ reads
\begin{align}\label{nonlin-param-red}
 \rho\partial_t \tilde{u}-\mu_*\Delta\tilde{u} +\nabla\tilde{\pi}&=\omega \rho\bar{u},\quad & \mbox{ in } \Omega\setminus\Sigma,\nonumber\\
{\rm div}\, \tilde{u}&= 0,\quad & \mbox{ in } \Omega\setminus\Sigma,\nonumber\\
\rho\kappa_*\partial_t\tilde{\vartheta}-d_*\Delta\tilde{\vartheta}&=\rho\kappa_*\omega\bar{\vartheta},\quad &\mbox{ in } \Omega\setminus\Sigma,\nonumber\\
\tilde{u}=\partial_{\nu} \tilde{\vartheta} &=0,\quad & \mbox{ on } \partial \Omega,\nonumber\\
P_\Sigma[\![\tilde{u}]\!]=0,\;
-P_\Sigma[\![\mu_*(\nabla\tilde{u} +[\nabla\tilde{u}]^{\sf T})]\!]\nu_\Sigma &=0,\quad & \mbox{ on }\Sigma,\\
[\![\tilde{\vartheta}]\!]&=0\quad&\mbox{ on } \Sigma\nn\\
(l_*/\theta_*)[\![ \tilde{u}\cdot\nu_\Sigma]\!]/[\![1/\rho]\!] -[\![d_*\partial_{\nu_\Sigma} \tilde{\vartheta}]\!]&=0,\quad & \mbox{ on }\Sigma\nonumber\\
-[\![ {T}(\tilde{u},\tilde{\pi},\tilde{\vartheta}) \nu_\Sigma\cdot\nu_\Sigma]\!]+\sigma \cA_\Sigma \tilde{h} &=0,\quad & \mbox{ on } \Sigma,\nonumber\\
l_*\tilde{\vartheta}-[\![{T}(\tilde{u},\tilde{\pi},\tilde{\vartheta}) \nu_\Sigma\cdot\nu_\Sigma/\rho]\!]&=0,\quad & \mbox{ on } \Sigma,\nonumber\\
\partial_t\tilde{h}-[\![\rho \tilde{u}\cdot\nu_\Sigma]\!]/[\![\rho]\!]&=\omega\bar{h},\quad & \mbox{ on } {\Sigma},\nonumber\\
\tilde{u}(0)=\tilde{u}_0,\quad\tilde{\vartheta}(0)=\tilde{\vartheta}_0,\quad \tilde{h}(0)&=\tilde{h}_0.&\nonumber
\end{align}
{\bf 5.} \,  Problem \eqref{nonlin-param} can  be written abstractly as
\begin{equation}\label{abstract-problem1}\LL_\omega \bar{w} = N(\bar{w}+\tilde{w}+w_\infty)-N(w_\infty).\end{equation}
And \eqref{nonlin-param-red} abstractly becomes the evolution equation
\begin{equation}\label{abstract-problem2}
\dot{\tilde{z}}+L\tilde{z}=R(\bar{z}),\quad t>0,\; \tilde{z}(0)=\tilde{z}_0,
\end{equation}
in the Banach space $X_0$, with $L$ defined as in Section 4 and $R(\bar{z}):=\omega((I-T_1)\bar{u},\kappa_*\bar{\vartheta},\bar{h})$.
This abstract problem  can be treated in the same way as in the proof of
Theorem 5.2 in Pr\"uss, Simonett and Zacher \cite{PSZ10}. This implies the following result.

\bigskip

\begin{theorem} \label{stability} Let $p>n+2$, $\sigma,\rho_1,\rho_2>0$, $\rho_1\neq\rho_2$, and  suppose $\psi_j\in C^3(0,\infty)$, $\mu_j,d_j\in C^2(0,\infty)$ are such that
$$\kappa_j(s)=-s\psi_j^{\prime\prime}(s)>0,\quad \mu_j(s)>0,\quad  d_j(s)>0,\quad s\in(0,\infty),\; j=1,2.$$
Then in the topology of the state manifold $\cSM$ we have:\\
(i)  $(0,\theta_*,\Gamma_*)\in\cE$ is stable if and only if $\Gamma_*$ is connected.\\
(ii)  Any solution starting in a neighborhood of a stable equilibrium exists globally and converges to a probably different stable equilibrium in the topology of $\cSM$.\\
(iii) Any solution starting and {\em staying in} a neighborhood of an unstable equilibrium exists globally and converges to a probably different unstable equilibrium in the topology of $\cSM$.
\end{theorem}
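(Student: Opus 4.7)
My plan is to apply the generalized principle of linearized stability in the form of \cite{PSZ10} to the reduced system \eqref{abstract-problem1}--\eqref{abstract-problem2} already set up above. The spectral ingredients are all provided by Theorem \ref{propertiesL}: $-L$ generates a compact analytic $C_0$-semigroup on $X_0$ with maximal $L_p$-regularity, $0$ is a semi-simple eigenvalue whose kernel is isomorphic to the tangent space $T_{e_*}\cE$ of the equilibrium manifold, and the remaining spectrum lies strictly in the open left half-plane exactly when $\Sigma$ is connected; otherwise $-L$ has precisely $m-1$ strictly positive eigenvalues. The nonlinear chart $z = \tilde z + \phi(\tilde z)$ with $\phi(0)=0$, $\phi'(0)=0$ from step 3 identifies $\cSM$ locally near $e_*$ with an open neighborhood of $0$ in $\cSX$, and under this chart the manifold $\cE$ of equilibria corresponds to an open neighborhood of $0$ in $N(L)$.

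First I would solve the auxiliary problem \eqref{abstract-problem1} for $\bar w$ in terms of $(\tilde w, w_\infty)$. Choosing $\omega>0$ sufficiently large, the shifted operator $\LL_\omega$ is an isomorphism from the solution space onto the data space by maximal $L_p$-regularity (Theorem \ref{th:3.1} applied to $\partial_t+\omega$, together with the analogous result for the decoupled heat block). Since $N\in C^1$ with $N(w_*)=0$ and $N'(w_*)=0$, the implicit function theorem yields a unique small solution $\bar w = \Psi(\tilde w,w_\infty)$, depending $C^1$-smoothly on its arguments and of quadratic order in the combined smallness of $\tilde w$ and $w_\infty-w_*$. Substituting back into \eqref{abstract-problem2} produces the quasilinear abstract evolution equation
\begin{equation*}
\dot{\tilde z}+L\tilde z = \cF(\tilde z, z_\infty), \qquad \tilde z(0)=\tilde z_0,
\end{equation*}
in $X_0$, with $\cF\in C^1$, $\cF(0,0)=0$, and $\partial_{\tilde z}\cF(0,0)=0$; hence the right-hand side is a genuine higher-order perturbation of $L$.

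This is precisely the setting of \cite[Theorem~5.2]{PSZ10}. When $\Sigma$ is connected, the stable-manifold version of that result yields that every initial datum in a small ball of $\cSX$ around $0$ produces a global solution which converges exponentially in the topology of $\cSM$ to some equilibrium close to $e_*$, which proves statement~(ii) and the ``if'' direction of~(i). When $\Sigma$ is disconnected, the unstable eigenvalues of $-L$ provide, via the same machinery, a local unstable manifold containing trajectories that leave every small neighborhood of $e_*$, so $e_*$ is unstable; simultaneously any solution that remains in a small $\cSM$-neighborhood of $e_*$ is confined to the local center manifold, which is foliated by orbits converging to nearby (unstable) equilibria, giving~(iii). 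Combined with Theorem \ref{propertiesL} this yields all three assertions.

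The main obstacle I expect is the careful book-keeping required to verify that $\cF$ is genuinely superlinear in the topology of $\cSM$: one must check that the quadratic structure $F_j'(0)=G_k'(0)=0$ of the nonlinearities, combined with the smoothness of the chart $\phi$ and of the parametrization of $\cE$ near $e_*$, transfers to an estimate of the form $|\cF(\tilde z, z_\infty)|_{X_0}\leq C(|\tilde z|_{\cSX}+|z_\infty-z_*|_{\cSX})^2$, uniformly in the auxiliary parameter $z_\infty$ varying over a small neighborhood of $w_*$ in $\cE$. Matching the interpolation spaces so that the $C^1$-dependence of $\Psi$ on $\tilde w$ does not cost regularity is the delicate point; once it is in hand, the abstract principle of linearized stability applies verbatim.
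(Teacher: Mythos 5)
Your proposal is correct and follows essentially the same route as the paper: the same decomposition $w=\tilde w+\bar w+w_\infty$ with the auxiliary problem solved by maximal regularity, the reduced abstract evolution equation for $\tilde z$, and the application of the generalized principle of linearized stability as in Theorem~5.2 of \cite{PSZ10}, with the normally stable/normally hyperbolic dichotomy supplied by Theorem~\ref{propertiesL}. The paper's own proof is exactly this reduction followed by the citation of \cite{PSZ10}, so no further comparison is needed.
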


\section{Global Existence and Convergence}

\noindent
In this section we study the global properties of problem \eqref{NS},\eqref{Heat},\eqref{GTS}

\bigskip

\subsection{The Local Semiflow}\label{locsemiflow}

We follow here the approach introduced in K\"ohne, Pr\"uss and Wilke \cite{KPW10} for the isothermal incompressible two-phase Navier-Stokes problem without phase transitions and in Pr\"uss, Simonett and  Zacher \cite{PSZ10} for the Stefan problem with surface tension.

Recall that the closed $C^2$-hyper-surfaces contained in $\Omega$ form a $C^2$-manifold,
which we denote by $\cMH^2(\Omega)$.
The charts are the parameterizations over a given hyper-surface $\Sigma$ according to Section 2, and the tangent
space consists of the normal vector fields on $\Sigma$.
We define a metric on $\cMH^2(\Omega)$ by means of
$$d_{\cMH^2}(\Sigma_1,\Sigma_2):= d_H(\cN^2\Sigma_1,\cN^2\Sigma_2),$$
where $d_H$ denotes the Haussdorff metric on the compact subsets of $\R^n$ introduced in Section 2.
This way $\cMH^2(\Omega)$ becomes a Banach manifold of class $C^2$.

Let $d_\Sigma(x)$ denote the signed distance for $\Sigma$.
We may then define the {\em level function} $\varphi_\Sigma$ by means of
$$\varphi_\Sigma(x) = \phi(d_\Sigma(x)),\quad x\in\R^n,$$
where
$$\phi(s)=s(1-\chi(s/a))+{\rm sgn}\, s \chi(s/a),\quad s\in \R.$$
It is easy to see that $\Sigma=\varphi_\Sigma^{-1}(0)$, and $\nabla \varphi_\Sigma(x)=\nu_\Sigma(x)$, for each $x\in \Sigma$. Moreover, $\kappa=0$ is an eigenvalue of $\nabla^2\varphi_\Sigma(x)$, the remaining eigenvalues of $\nabla^2\varphi_\Sigma(x)$ are the principal curvatures $\kappa_j$ of $\Sigma$ at $x\in\Sigma$.

If we consider the subset $\cMH^2(\Omega,r)$ of $\cMH^2(\Omega)$ which consists of all closed hyper-surfaces $\Gamma\in \cMH^2(\Omega)$ such that $\Gamma\subset \Omega$ satisfies the ball condition with fixed radius $r>0$ then the map $\Phi:\cMH^2(\Omega,r)\to C^2(\bar{\Omega})$ defined by $\Phi(\Gamma)=\varphi_\Gamma$ is an isomorphism of
the metric space $\cMH^2(\Omega,r)$ onto $\Phi(\cMH^2(\Omega,r))\subset C^2(\bar{\Omega})$.

Let $s-(n-1)/p>2$; for $\Gamma\in\cMH^2(\Omega,r)$, we define $\Gamma\in W^s_p(\Omega,r)$ if $\varphi_\Gamma\in W^s_p(\Omega)$. In this case the local charts for $\Gamma$ can be chosen of class $W^s_p$ as well. A subset $A\subset W^s_p(\Omega,r)$ is  (relatively) compact, if and only if  $\Phi(A)\subset W^s_p(\Omega)$ is (relatively) compact.

As an ambient space for the
{\bf state manifold }$\cSM$ of Problem  (\ref{NS}), (\ref{Heat}), (\ref{GTS})  we consider
the product space $L_p({\Omega})^{n+1}\times \cMH^2$.

We define the {\bf state manifold} for the problem $\cSM$ as follows.
\begin{eqnarray}\label{phasemanif0}
\cSM:=&&\hspace{-0.5cm}\Big\{(u,\theta,\Gamma)\in L_p({\Omega})^{n+1}\times \cMH^2(\Omega):
 (u,\theta)\in W^{2-2/p}_p(\Omega\setminus\Gamma)^{n+1},\, \Gamma\in W^{3-2/p}_p,\nonumber\\
 && {\rm div}\, u=0\; \mbox{ in }\; \Omega,\quad \theta>0\; \mbox{ in }\; \bar{\Omega},\;\quad u=\partial_\nu\theta =0\; \mbox{ on } \partial\Omega,\\
&& [\![\theta]\!]=[\![P_\Gamma u]\!]=P_\Gamma[\![\mu(\theta)(\nabla u +[\nabla u]^{\sf T})]\!]\nu_\Gamma =0\quad \mbox{ on } \Gamma,\nonumber\\
&&l(\theta)[\![u\cdot\nu_\Gamma]\!]/[\![1/\rho]\!]+ [\![d(\theta)\partial_{\nu_\Gamma} \theta]\!] =0 \quad \mbox{ on } \Gamma\Big\},\nonumber
\end{eqnarray}
Charts for these manifolds are obtained by the charts induced by $\cMH^2(\Omega)$,
followed by a Hanzawa transformation.

Applying Theorem \ref{wellposed} and re-parameterizing the interface repeatedly,
we see that (\ref{NS}), (\ref{Heat}), (\ref{GTS}) yields a local semiflow on $\cSM$.

\begin{theorem}\label{semiflow} Let $p>n+2$, $\sigma,\rho_1,\rho_2>0$, $\rho_1\neq\rho_2$, and suppose $\psi_j\in C^3(0,\infty)$, $\mu_j,d_j\in C^2(0,\infty)$ such that
$$\kappa_j(s)=-s\psi_j^{\prime\prime}(s)>0,\quad \mu_j(s)>0,\quad  d_j(s)>0,\quad s\in(0,\infty),\; j=1,2.$$
Then problem (\ref{NS}), (\ref{Heat}), (\ref{GTS}) generates a local semiflow
on the state manifold $\cSM$. Each solution $(u,\theta,\Gamma)$ of the problem exists on a maximal time
interval $[0,t_*)$, where $t_*=t_*(u_0,\theta_0,\Gamma_0)$.
\end{theorem}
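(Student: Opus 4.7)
The plan is to combine the local well-posedness of Theorem~\ref{wellposed} with the chart structure on $\cMH^2(\Omega)$ described earlier in this section. Given an initial datum $(u_0,\theta_0,\Gamma_0)\in\cSM$, I first fix an auxiliary real-analytic reference hypersurface $\Sigma$ close enough to $\Gamma_0$ in the Hausdorff distance of second order bundles that $\Gamma_0$ is representable as the normal graph of a height function $h_0\in W^{3-2/p}_p(\Sigma)$ with $|h_0|_{C^2(\Sigma)}$ small enough for \eqref{hanzawainv}. Pulling $u_0,\theta_0$ back along $\Xi_{h_0}$ produces transformed data in $W^{2-2/p}_p(\Omega\setminus\Sigma)^{n+1}$, and the defining conditions of $\cSM$ translate into exactly the regularity and compatibility hypotheses of Theorem~\ref{wellposed}. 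That theorem then supplies a unique $L_p$-solution $(\bar{u},\bar{\pi},\bar{\theta},h)\in\EE(0,\tau)$ of \eqref{ti2pp} on some nontrivial interval.

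The push-forward $(u,\theta,\Gamma)$ with $\Gamma(t)=\Xi_h(t,\Sigma)$ yields a solution of \eqref{NS}, \eqref{Heat}, \eqref{GTS} in the original coordinates, and I verify $(u(t),\theta(t),\Gamma(t))\in\cSM$ on $[0,\tau]$. The divergence-free condition, the boundary data on $\partial\Omega$, the jumps $[\![\theta]\!]=P_\Gamma[\![u]\!]=0$, the tangential stress compatibility, and the flux identity $l(\theta)[\![u\cdot\nu_\Gamma]\!]/[\![1/\rho]\!]+[\![d(\theta)\partial_{\nu_\Gamma}\theta]\!]=0$ are precisely the interior and interface constraints of the PDE, and hence are inherited from the solution by construction. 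Positivity of $\theta$ is preserved on a possibly smaller subinterval thanks to the embedding $\EE_{u,\theta}(J)\hookrightarrow C(J;C(\bar{\Omega}_j))^{n+1}$ available for $p>n+2$, together with strict positivity of $\theta_0$; the regularity $\Gamma(t)\in W^{3-2/p}_p$ follows from $h\in\EE_h(J)\hookrightarrow C(J;C^{2+\alpha}(\Sigma))$, which also keeps $\Gamma(t)$ in $\cMH^2(\Omega,r)$ with a uniform ball radius on compact subintervals.

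To build the semiflow, I iterate: at any $t_1$ in the interval of existence the state $(u(t_1),\theta(t_1),\Gamma(t_1))$ lies again in $\cSM$, so it admits its own chart over a possibly different analytic reference $\Sigma^\prime$, and Theorem~\ref{wellposed} produces a continuation. Uniqueness of $L_p$-solutions, established by the contraction argument of Section~3.4, ensures that the glued solution is independent of the choice of reference hypersurface and that the resulting map $S(t):(u_0,\theta_0,\Gamma_0)\mapsto(u(t),\theta(t),\Gamma(t))$ satisfies the cocycle identity $S(t+s)=S(t)\circ S(s)$ whenever both sides are defined within a common chart. Continuous dependence in the $\cSM$-topology comes from the continuity of the solution map in Theorem~\ref{wellposed}, the smooth dependence of the Hanzawa transform on $h$, and the homeomorphism $\Gamma\mapsto\varphi_\Gamma$ from $\cMH^2(\Omega,r)$ to $C^2(\bar\Omega)$ recalled at the beginning of this section. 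A standard Zorn argument over all compatible extensions then defines the maximal interval $[0,t_*(u_0,\theta_0,\Gamma_0))$.

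The delicate step I anticipate is the chart change at intermediate times. For Theorem~\ref{wellposed} to apply at $t_1$, the new height function representing $\Gamma(t_1)$ over the new reference $\Sigma^\prime$ must satisfy the smallness condition \eqref{hanzawainv}; this requires approximating $\cN^2\Gamma(t_1)$ by an analytic surface with quantitative control of the second order bundle and exploiting continuity of $t\mapsto\Gamma(t)$ in $\cMH^2(\Omega)$ inherited from $h\in\EE_h(J)\hookrightarrow C(J;C^{2+\alpha}(\Sigma))$. Since the approximation property from Section~2 is available uniformly on families satisfying a common ball condition, this can be arranged at every $t_1\in[0,t_*)$, which closes the construction.
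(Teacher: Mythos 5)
Your proposal is correct and follows essentially the same route as the paper: the paper's proof consists precisely of applying Theorem~\ref{wellposed} in a chart of $\cMH^2(\Omega)$ obtained via a Hanzawa transformation over an approximating reference surface, noting that the compatibility conditions defining $\cSM$ are preserved, and re-parameterizing the interface repeatedly, with uniqueness of $L_p$-solutions giving the semiflow property and the maximal interval. Your write-up merely fills in the details (chart change at intermediate times, preservation of $\theta>0$, continuous dependence) that the paper leaves implicit by reference to \cite{KPW10} and \cite{PSZ10}.
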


\noindent
Note that the pressure $\pi$ as well as the phase flux $j$ are dummy variables which are determined for each $t$ by the principal variables
$(u,\theta,\Gamma)$. In fact, $j$ is given by
$$ j= [\![u\cdot\nu_\Gamma]\!]/[\![1/\rho]\!],$$
and $\pi$ is determined by the weak transmission problem
\begin{align*}
&(\nabla\pi|\nabla\phi/\rho)_{L_2(\Omega)}
=(2\rho^{-1}{\rm div}(\mu(\theta)D(u))-u\cdot\nabla u|\nabla\phi)_{L_2(\Omega)},\quad \phi\in H^1_{p^\prime}(\Omega),\\
&[\![\pi]\!]= 2[\![\mu(\theta)D(u)\nu_\Gamma\cdot\nu_\Gamma]\!]+\sigma H_\Gamma-[\![1/\rho]\!]j^2,\\
&[\![\pi/\rho]\!]= 2[\![(\mu(\theta)/\rho) D(u)\nu_\Gamma\cdot\nu_\Gamma]\!] - [\![1/2\rho^2]\!]j^2-[\![\psi(\theta)]\!]\quad \mbox{ on } \Gamma,
\end{align*}
where $D(u)=(\nabla u +[\nabla u]^{\sf T})/2$ as before. Concerning such transmission problems we refer to \cite{KPW10}.

\bigskip

\subsection{Compactness of Orbits}

Let us briefly comment on the fact of how one can use the time weights $t^{1-\mu}$, $\mu\in (1/p,1]$ introduced in Subsection \ref{subsec:timeweights}, to prove relative compactness of bounded orbits in $\cSM$. For simplicity we assume that the free boundary is fixed, hence we do only consider the orbits $\{u(t)\}_{t\in [0,t_*)}$ and $\{\theta(t)\}_{t\in [0, t_*)}$ of the velocity field $u$ and the temperature field $\theta$, respectively.

Assume that there exists $M>0$ such that $|u(t)|_{W_p^{2-2/p}}+|\theta(t)|_{W_p^{2-2/p}}\le M$ for all $t\in[0,t_*)$. Since $\Omega$ is bounded it follows that $\{u(t)\}_{t\in [0,t_*)}$ and $\{\theta(t)\}_{t\in [0, t_*)}$ are relatively compact w.r.t.\ the topology of $W_p^{2\mu-2/p}$. The continuous dependence of the solution on the initial data in $W_p^{2\mu-2/p}$ and the instantaneous regularization of the solution, see Corollary \ref{wellposed3}, yield that the orbits of $u$ and $\theta$ are also relatively compact in $W_p^{2-2/p}$. This in turn yields that the solution exists globally, i.e.\ $t_*=\infty$. We refer to \cite{PrWi09} for more details.

If one considers free boundary problems, then one has to work in the setting which has been introduced in Subsection \ref{locsemiflow}. In particular, to show relative compactness of the orbits in $\cSM$ one has to perform several Hanzawa transforms for each of the finitely many balls which cover the relatively compact set $\{\Gamma(t)\}_{t\ge 0}$ in $W_p^{3-2/p-\varepsilon}(\Omega,r)$. The pull backs of the velocity and temperature field for each of these balls will then be relatively compact in $W_p^{2-2/p-\varepsilon}$. More details  are given in the proof of Theorem \ref{Qual} below.

\subsection{Convergence}

There are several obstructions against global existence:
\begin{itemize}
\item {\em regularity}: the norms of either $u(t)$, $\theta(t)$, or $\Gamma(t)$  become unbounded;
\item {\em geometry}: the topology of the interface changes;\\
    or the interface touches the boundary of $\Omega$.
\item {\em well-posedness}: the temperature becomes $0$.
\end{itemize}
Note that  the compatibility conditions,
\begin{align*}
&{\rm div}\, u(t)=0\mbox{  in } \Omega\setminus\Gamma(t), \quad u=\partial_\nu\theta=0\mbox{ on }\partial\Omega,\\
& P_\Gamma [\![u(t)]\!]=[\![\theta]\!]=P_\Gamma[\![\mu D(u)(t)]\!]
=l(\theta)[\![u\cdot\nu_\Gamma]\!/[\![1/\rho]\!]+[\![d(\theta)\partial_{\nu_\Gamma}\theta]\!]=0 \mbox{ on }\Gamma(t),
 \end{align*}
are preserved by the semiflow.

Let $(u,\theta,\Gamma)$ be a solution in the state manifold $\cSM$ with maximal interval $[0,t_*)$. By the
{\em uniform ball condition} we mean the existence of a radius $r_0>0$ such that for each $t$,
at each point $x\in\Gamma(t)$ there exists centers $x_i\in \Omega_i(t)$ such that
$B_{r_0}(x_i)\subset \Omega_i$ and $\Gamma(t)\cap \bar{B}_{r_0}(x_i)=\{x\}$, $i=1,2$. Note that this condition
bounds the curvature of $\Gamma(t)$, prevents parts of it to  touch the outer
boundary $\partial \Omega$, and to undergo topological changes. Hence if this condition holds, then the volumes  of the phases are preserved.

With this property, combining the local semiflow for (\ref{NS}), (\ref{Heat}), (\ref{GTS}) with
the Ljapunov functional and compactness we obtain the following result.

\bigskip

\begin{theorem} \label{Qual} Let $p>n+2$, $\sigma,\rho_1,\rho_2>0$, $\rho_1\neq\rho_2$, and suppose $\psi_j\in C^3(0,\infty)$, $\mu_j,d_j\in C^2(0,\infty)$ are such that
$$\kappa_j(s)=-s\psi_j^{\prime\prime}(s)>0,\quad \mu_j(s)>0,\quad  d_j(s)>0,\quad s\in(0,\infty),\; j=1,2.$$
Suppose that $(u,\theta,\Gamma)$ is a solution of
(\ref{NS}), (\ref{Heat}), (\ref{GTS}) in the state manifold $\cSM$ on its maximal time interval $[0,t_*)$.
Assume there is constant $M>0$  such that the  following conditions hold on $[0,t_*)$:\\
(i)  \, $|u(t)|_{{W^{2-2/p}_p}},|\theta(t)|_{W^{2-2/p}_p},|\Gamma(t)|_{W^{3-2/p}_p}\leq M<\infty$; \\
(ii) \, $\Gamma(t)$ satisfies the uniform ball condition.\\
(iii) \, $\theta(t)\geq 1/M$ on $\bar{\Omega}$.

\medskip

\noindent
Then $t_*=\infty$, i.e.\ the solution exists globally, and its limit set $\omega_+(u,\theta,\Gamma)\subset\cE$ is non-empty. If further $(0,\theta_\infty,\Gamma_\infty)\in\omega_+(u,\theta,\Gamma)$ with $\Gamma_\infty$ connected, then the solution converges in $\cSM$ to this equilibrium.

Conversely, if $(u(t),\theta(t),\Gamma(t))$ is a global solution in $\cSM$ which converges to an equilibrium $(0,\theta_*,\Gamma_*)\in\cE$  in $\cSM$ as $t\to\infty$, then (i), (ii) and (iii) are valid.
\end{theorem}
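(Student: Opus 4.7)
I would decompose the argument into four steps, the decisive one being the relative compactness of the orbit in the nonlinear state manifold $\cSM$.

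First, to show $t_*=\infty$ I would argue by contradiction. Suppose $t_*<\infty$ and choose $t_n\uparrow t_*$. The bound in (i) together with the uniform ball condition (ii) place $\{\Gamma(t_n)\}$ in a bounded subset of $W^{3-2/p}_p(\Omega,r_0)$, so after passing to a subsequence each $\Gamma(t_n)$ can be realized as a graph of a height function with uniformly small $C^2$-norm over a fixed real analytic reference hyper-surface $\Sigma$ in the sense of Section 2. The traces $(u(t_n),\theta(t_n))$ lie in a bounded set of $W^{2-2/p}_p(\Omega\setminus\Gamma(t_n))^{n+1}$, the compatibility conditions of Theorem~\ref{wellposed} are preserved by the semiflow, and (iii) provides the well-posedness condition. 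The contraction argument in Step~3 of the proof of Theorem~\ref{wellposed} gives a lower bound on the existence time which depends only on $M$ and $r_0$; hence the solution can be continued past $t_*$, contradicting maximality.

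Second, relative compactness of the orbit in $\cSM$. Fix $\delta\in(0,1)$. By (i) and the compact embedding of $W^{3-2/p}_p$ into $W^{3-2/p-\delta}_p$, the set $\{\Gamma(t)\}_{t\ge 1}$ is relatively compact in $W^{3-2/p-\delta}_p(\Omega,r_0)$; its closure can therefore be covered by finitely many Hanzawa charts over $C^3$ reference manifolds $\Sigma_\ell$. On each chart the pull-backs $(\bar u(t),\bar\theta(t),h(t))$ lie in a bounded set of the weaker trace space $W^{2\mu-2/p}_p\times W^{2\mu-2/p}_p\times W^{2+\mu-2/p}_p$ for some $\mu\in(1/2+(n+2)/2p,1)$, hence relatively compact in it. Invoking the continuous dependence and instantaneous regularization of Corollary~\ref{wellposed3}, this weak relative compactness is promoted to relative compactness in $W^{2-2/p}_p\times W^{2-2/p}_p\times W^{3-2/p}_p$, and thus in $\cSM$. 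In particular $\omega_+(u,\theta,\Gamma)$ is non-empty.

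Third, to show $\omega_+\subset\cE$ I would use that $-\Phi$ is a strict Lyapunov functional by Theorem~1.1(ii), while conservation of ${\sf E}$ combined with (iii) bounds $\Phi(t)$; hence $\Phi(t)\to\Phi_\infty$. For any $\omega$-limit point $z_\infty=(u_\infty,\theta_\infty,\Gamma_\infty)\in\cSM$, continuity of the semiflow and monotonicity of $\Phi$ force $\Phi\equiv\Phi_\infty$ along the trajectory emerging from $z_\infty$, hence the entropy production (as computed in \cite{PSSS12}) vanishes identically: $D(u_\infty)\equiv 0$, $\nabla\theta_\infty\equiv 0$, $j_\infty\equiv 0$. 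Korn's inequality plus the no-slip boundary condition yields $u_\infty=0$, $\theta_\infty$ is constant, and the Gibbs--Thomson relation then reduces to constancy of $H_{\Gamma_\infty}$ so that, by Theorem~1.1(iii), $z_\infty\in\cE$.

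Fourth, convergence. Assume $(0,\theta_*,\Gamma_*)\in\omega_+\cap\cE$ with $\Gamma_*$ connected. Along some $t_n\to\infty$ the solution enters every $\cSM$-neighborhood of $(0,\theta_*,\Gamma_*)$. Since $\Gamma_*$ is connected, Theorem~\ref{stability}(i)--(ii) applies and gives global existence plus convergence of the solution starting from $(u(t_n),\theta(t_n),\Gamma(t_n))$, for $n$ sufficiently large, to some (possibly different) stable equilibrium $(0,\theta_\infty,\Gamma_\infty)\in\cE$; by uniqueness of the forward semiflow this limit coincides with $\lim_{t\to\infty}(u(t),\theta(t),\Gamma(t))$ in $\cSM$. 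For the converse direction, convergence in $\cSM$ to $(0,\theta_*,\Gamma_*)\in\cE$ together with the positivity $\theta_*>0$ on $\bar\Omega$ yields (i)--(iii) immediately. The main obstacle is the second step: $\cSM$ is genuinely nonlinear and modeled on $\cMH^2(\Omega)$, so the extraction of a limit must be coordinated across a moving family of Hanzawa reference manifolds, which forces a uniform-in-coefficients version of Corollary~\ref{wellposed3} obtained by a perturbation argument from Theorem~\ref{th:3.1}.
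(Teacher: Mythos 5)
Your proposal follows essentially the same route as the paper: relative compactness of the orbit via the compact embedding $W^{3-2/p}_p\hookrightarrow W^{3-2/p-\varepsilon}_p$ and finitely many Hanzawa charts, promotion to compactness in $\cSM$ through the time-weighted well-posedness and instantaneous regularization of Corollary~\ref{wellposed3}, the strict Lyapunov functional $-\Phi$ to place $\omega_+(u,\theta,\Gamma)$ in $\cE$, and Theorem~\ref{stability} to conclude convergence once the solution enters a neighborhood of a connected (hence stable) equilibrium, with the converse by a compactness/continuity argument. The only caveat is that your Step 1 claim of a lower existence-time bound depending only on $M$ and $r_0$ is not directly furnished by the contraction argument of Theorem~\ref{wellposed}; this is harmless, since your Step 2 — which is exactly the paper's argument — already yields $t_*=\infty$ without it.
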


\begin{proof}
Assume that  (i), (ii) and (iii)  are valid. Then $\Gamma([0,t_*))\subset W^{3-2/p}_p(\Omega,r)$ is bounded, hence relatively compact in
$W^{3-2/p-\ve}_p(\Omega,r)$. Thus we may cover this set by finitely many balls with centers $\Sigma_k$ real analytic in such a way that
$${\rm dist}_{W^{3-2/p-\ve}_p}(\Gamma(t),\Sigma_j)\leq \delta,$$ for some $j=j(t)$, $t\in[0,t_*)$. Let $J_k=\{t\in[0,t_*):\, j(t)=k\}$; using for each $k$ a Hanzawa-transformation $\Xi_k$, we see that the pull backs $\{(u(t,\cdot),\theta(t,\cdot))\circ\Xi_k:\, t\in J_k\}$ are bounded in $W^{2-2/p}_p(\Omega\setminus \Sigma_k)^{n+1}$, hence relatively compact in $W^{2-2/p-\ve}_p(\Omega\setminus\Sigma_k)^{n+1}$. Employing now Corollary \ref{wellposed3}  we obtain solutions
 $(u^1,\theta^1,\Gamma^1)$ with initial configurations $(u(t),\theta(t),\Gamma(t))$ in the state manifold on a common time interval say $(0,\tau]$, and by uniqueness we have
$$(u^1(\tau),\theta^1(\tau),\Gamma^1(\tau))=(u(t+\tau),\theta(t+\tau),\Gamma(t+\tau)).$$
Continuous dependence implies that the  orbit of the solution $(u(\cdot),\theta(\cdot),\Gamma(\cdot))$ is  relative compact in $\cSM$, in particular $t_*=\infty$ and  $(u,\theta,\Gamma)(\R_+)\subset\cSM$ is relatively compact.
The negative total entropy is a strict Ljapunov functional, hence the limit set $\omega_+(u,\theta,\Gamma)\subset \cSM$
of a solution is contained in the set $\cE$ of equilibria.  By compactness $\omega_+(u,\theta,\Gamma)\subset \cSM$ is non-empty, hence the solution comes close to $\cE$, and stays there. Then we may apply the convergence result Theorem \ref{stability}.
The converse statement follows by a compactness argument.
\end{proof}

\section{Proof of Theorems 3.1}

In this section, we prove Theorem \ref{th:3.1}, employing the methods introduced
in \cite{KPW10}. Actually, most of the arguments given in that paper remain valid for the problem under consideration here, hence we restrict on the necessary modifications.

\subsection{Flat interface}
In this subsection,  we consider first the linear problem with constant coefficients
for a flat interface. Due to the jump in the velocity, this problem differs from that in \cite{KPW10}.
\begin{alignat}3\label{7.1.1}
\rho\partial_t u -\mu_0 \Delta u +\nabla \pi&=\rho f_u\quad & \ &\mbox{ in } \dot\BR^n,\nn\\
{\rm div}\, u&=g_d\quad  & \ &\mbox{ in } \dot\BR^n,\nn\\
[\![ v ]\!] +c_0\nabla_x h &=P_\Sigma g_u\quad &\ &\mbox{ on }
\BR^{n-1},\\
-[\![\mu_0 \pd_y v]\!] - [\![\mu_0 \nabla_x w]\!] &= g_v \quad
&\ &\mbox{ on } \BR^{n-1},\nn\\
-2[\![\mu_0 \pd_y w]\!]+ [\![\pi]\!] -\sigma\Delta_x h &= g_w \quad
&\ &\mbox{ on } \BR^{n-1},\nn\\
-2[\![\mu_0 \pd_yw/\rho]\!]+ [\![\pi/\rho]\!] &= g_h \quad&\ &\mbox{ on }
\BR^{n-1},\nn\\
\partial_t h-[\![\rho w]\!]/[\![\rho]\!]+b_0\cdot\nabla_x h/[\![\rho]\!]&= f_h\quad&\
&\mbox{ on } \BR^{n-1},\nn\\
u(0)&=u_0\quad &\ &\mbox{ in } \dot\BR^n,\quad&\quad\nn\\
h(0)&=h_0\quad &\ &\mbox{ on } \BR^{n-1},\quad &\quad\nn
\end{alignat}
with $\mu_0>0$, $c_0\in \BR$, $b_0\in \BR^{n-1}$.
Here we have identified $\BR^{n-1}=\BR^{n-1}\times\{0\}$, and
$\dot \BR^n=\BR^n\setminus\BR^{n-1}$. It is convenient to split $u=(v,w)$,
$g=(g_v,g_w)$ into tangential and normal components. Since the 3rd and the 7th equation
differ from the coupled system (3.1) and (3.3) in \cite{PrSh12}, we solve
\eqref{7.1.1} with all right hand sides 0 except $f_h$.
Employing  Laplace transform in $t$ and  Fourier transform
in the tangential variable $x\in \BR^{n-1}$ similar to  Sec.4 of
\cite{PrSh12}, the transformed equation for $\hat h$ reads
$$
\lambda \hat h - \frac{\lj \rho \hat w(0)\rj}{\lj \rho\rj} +
\frac {b_0 \cdot i\xi \hat h}{\lj \rho\rj} =\hat f_h.
$$
We obtain after some linear algebra
$$
s(\lambda,|\xi|)\hat h =\hat f_h.
$$
Setting $\tau=|\xi|$ and employing the scaling $z=\lambda/\tau^2$, we obtain
the {\em boundary symbol} $s(\lambda,\tau)$ in the form
\begin{equation}\label{7.1.4}
s(\lambda,\tau)=\lambda + \frac{\sigma\tau}{\lj \rho\rj^2}m(z)
+ \frac{c_0 \tau}{\lj \rho\rj} \ell(z) + \frac {i\tau}{\lj \rho\rj} (b_0\cdot\frac{\xi}{|\xi|}).
\end{equation}
We derive this formula in the Appendix.
The first and the second terms are the same symbols as in the
system (3.1) and (3.3) in \cite{PrSh12}. The holomorphic function $m(z)$
is the same as (4.14) in \cite{PrSh12} and $\ell(z)$ is given by
\begin{multline*}
\psi(z)\ell(z)=\rho_1\mu_2\frac{\omega_1(z)-1}{\omega_1(z)(\omega_1(z)^2+1)}
\left(\frac {\omega_2(z)-1}{\omega_2(z)} + 2 \frac{\omega_2(z)+1}{\omega_2(z)^2+1}
\right)\\
- \rho_2\mu_1\frac{\omega_2(z)-1}{\omega_2(z)(\omega_2(z)^2+1)}
\left(\frac {\omega_1(z)-1}{\omega_1(z)} + 2 \frac{\omega_1(z)+1}{\omega_1(z)^2+1}
\right),
\end{multline*}
where $\omega_j(z)=\sqrt{1+\rho_j z/\mu_j}$ and
\begin{multline*}
\psi(z)=\frac{\omega_1(z)+1}{\omega_1(z)^2+1}\mu_2 \left(\frac {\omega_2(z)-1}{\omega_2(z)} + 2 \frac{\omega_2(z)+1}{\omega_2(z)^2+1}
\right)\\
+\frac{\omega_2(z)+1}{\omega_2(z)^2+1}\mu_1 \left(\frac {\omega_1(z)-1}{\omega_1(z)} + 2 \frac{\omega_1(z)+1}{\omega_1(z)^2+1}
\right).
\end{multline*}
Note that $\omega_j(z)$ is holomorphic in the sliced plane $\C\setminus (-\infty,
-\frac{\mu_j}{\rho_j}]$, hence the function $\psi(z)$ has this property in $\C\setminus (-\infty,\eta]$, with $\eta = \min\{\mu_j/\rho_j\}$.
It is not difficult to see that $\psi$ maps $\overline{\C}_+$ into $\C_+$, and with
$\psi(0)=2(\mu_1+\mu_2)$, $\psi(z)\to \frac{\mu_1}{\sqrt z}
\sqrt{\frac{\mu_2}{\rho_2}}+ \frac{\mu_2}{\sqrt z}
\sqrt{\frac{\mu_1}{\rho_1}}$ as $|z|\to\infty$ we may conclude $\psi( \overline{\C}_+) \subset
\Sigma_\phi$ for some angle $\phi<\frac\pi 2$. By continuity of the argument
function, this implies  $\psi(\Sigma_{{\frac\pi2} +\eta}) \subset \C_+$ for
some $\eta>0$. Therefore $\psi(z)$ cannot vanish in $\Sigma_{{\frac\pi2} +\eta}$.
This implies that $\ell(z)$ is holomorphic in this sector and in a ball $B_{r_0}(0)$
for some $r_0>0$. For the asymptotic of $\ell(z)$ we have
$$
\ell(0)=0,\quad \lim_{|z|\to\infty} z\ell(z)=\frac{2\mu_1\mu_2
\left(\sqrt{{\mu_2}/{\rho_2}} - \sqrt{{\mu_1}/{\rho_1}}\right)}
{\mu_1\sqrt{{\mu_2}/{\rho_2}} +\mu_2 \sqrt{{\mu_1}/{\rho_1}}}
\quad \text{for}\ z\in \C\setminus\BR_-.
$$
Thus there is a constant $L=L(r,\phi)>0$ such that
$$
|\ell(z)|\le \frac L {1+|z|} \quad z\in \Sigma_\phi \cup B_r(0)
$$
for each $\phi<\frac\pi2+\eta$ and $r<r_0$. Combining this estimate
with the estimate
for $m(z)$ from Sec.4  in \cite{PrSh12}, it is easy to conclude
$$
|s(\lambda,\tau)|\le c_\eta (|\lambda|+|\tau|) \qquad \lambda\in
\Sigma_{\frac\pi2+\eta},\ \ \tau\in \Sigma_\eta,
$$
whenever $\eta>0$ is small enough.  Conversely since $m(0)>0$, given
a small $\eta>0$ we find $\tau_\eta\in(0,r_0)$ such that
$m(z)\in \Sigma_{{\frac\pi2}-3\eta}$ and $|m(z)|\ge \frac{m(0)}2$ for
all $\tau\in B_{r_\eta}(0)$. This implies that there is a constant $c_\eta>0$
such that
$$
|s(\lambda,\tau)| \ge |\lambda|+|\tau| \frac{\sigma}{2\lj\rho\rj^2}m(0),
\qquad \lambda\in\Sigma_{{\frac\pi2}+\eta},\ |\lambda|\le r_\eta|\tau|^2.
$$
On the other hand, choosing $|\lambda|\ge C|\tau|$ we obtain
$$
|s(\lambda,\tau)|\ge \frac{ |\lambda|}2 +
\left( \frac C2 - \frac \sigma{\lj\rho\rj^2}M - \frac{|c_0|}{\lj\rho\rj}L
-\frac{|b_0|}{\lj\rho\rj}\right) |\tau| \ge c_\eta(|\lambda|+|\tau|)
$$
for all $\lambda\in \Sigma_{{\frac\pi2}+\eta}$, $\tau\in\Sigma_\eta$,
with $|\lambda|\ge C|\tau|$ and
$$C>2(\sigma M/\lj\rho\rj^2 +
|c_0|L/\lj\rho\rj +|b_0|/\lj\rho\rj).$$
Therefore if $\lambda_0$ is chosen large enough this implies the lower bound
$$
|s(\lambda,\tau)|\ge c_\eta(|\lambda|+|\tau|) \qquad
\lambda\in \Sigma_{{\frac\pi2}+\eta},\ \tau\in\Sigma_\eta,\
|\lambda|\ge \lambda_0.
$$
Thus this boundary symbol has the same regularity behavior for $|\lambda|\ge \lambda_0$
as that for the  problem in \cite{PrSh12}.

The linear problem with variable coefficients but small deviations for a flat interface, i.e.
\begin{alignat}3\label{7.1.2}
\rho\partial_t u -\mu(x) \Delta u +\nabla \pi&=\rho f_u\quad & \ &\mbox{ in } \dot\BR^n,\quad &t>0,\nn\\
{\rm div}\, u&=g_d\quad  & \ &\mbox{ in } \dot\BR^n,\quad &t>0,\nn\\
[\![ v ]\!] +c(t,x)\nabla_x h &=P_\Sigma g_u\quad &\ &\mbox{ on }
\BR^{n-1},\quad &t>0,\\
-[\![\mu(x) \pd_y v]\!] - [\![\mu_0 \nabla_x w]\!] &= g_v \quad
&\ &\mbox{ on } \BR^{n-1},\quad &t>0,\nn\\
-2[\![\mu(x) \pd_y w]\!]+ [\![\pi]\!] -\sigma\Delta_x h &= g_w \quad
&\ &\mbox{ on } \BR^{n-1},\quad &t>0,\nn\\
-2[\![\mu_0 \pd_yw/\rho]\!]+ [\![\pi/\rho]\!] &= g_h \quad&\ &\mbox{ on }
\BR^{n-1},\quad &t>0,\nn\\
\partial_t h-[\![\rho w]\!]/[\![\rho]\!]+b(t,x)\cdot\nabla_x h/[\![\rho]\!]&= f_h\quad&\
&\mbox{ on } \BR^{n-1},\quad &t>0,\nn\\
u(0)&=u_0\quad &\ &\mbox{ in } \dot\BR^n,\quad&\quad\nn\\
h(0)&=h_0\quad &\ &\mbox{ on } \BR^{n-1},\quad &\quad\nn
\end{alignat}
can be handled by a perturbation argument in the same way as in \cite{KPW10}.
The same is true for bent interfaces.
However, the localization argument needs some significant modifications, which we explain in some detail now.

\subsection{General Bounded Geometries}
Here we use the method of localization. By assumption, $\pd\Omega$ is
bounded in class $C^{3-}$ and $\Sigma$ is bounded and real analytic,
so in particular of class $C^4$. Therefore we cover $\Sigma$ by $N$ balls
$B_{r/2}(x_k)$ with radius $r>0$ and centers $x_k\in\Sigma$ such that
$\Sigma\cap B_r(x_k)$ can be parametrized over the tangent space
$T_{x_k}\Sigma$ by a function $\theta_k\in C^4$ such that
$|\nabla\theta_k|_{\infty}\le \eta$ with $\eta>0$ defined as in the previous section.
We extend these functions $\theta_k$ to all of $T_{x_k}\Sigma$ retaining
the bound on $\nabla\theta_k$. In this way we have created $N$ bent half spaces
$\Sigma_k$ to which the result proved in the previous subsection applies.
We also suppose that $B_r(x_k)\subset \Omega$ for each $k$. Set
$U:=\Omega\setminus \cup_{j=1}^N \overline{B_{r/2}(x_k)}$ and
$U_k=B_r(x_k)$, $k=1,\dots, N$. The open set $U$ consists of one component
$U_0$ characterized by $\pd\Omega\subset \overline{U_0}$ and $m$
components open sets say $U_{N+1},\dots, N_{N+m}$, which are interior to $\Sigma$,
i.e., $\cup_{l=1}^m U_{N+l}\subset \Omega_1$.Fix a partition of unity
$\{\varphi_k\}_{j=0}^{N+m}$ subject to the covering $\{U_k\}_{j=0}^{N+m}$ of
$\Omega$, $\varphi\in C_0^\infty(\BR^n)$, $0\le \varphi_k\le 1$ and
$\sum_{j=0}^{N+m}\varphi_k\equiv 1$. Note that $\varphi_0=1$ in a
neighborhood of $\pd\Omega$. Let $\tilde\varphi_k$ denote cut-off functions with support in $U_k$ such that $\tilde\varphi_k=1$ on the support of $\varphi_k$.
\par
Let $z=(u,\pi, \lj\pi\rj, h)$ be a solution of \eqref{linFB} where we assume without
loss of generality $u_0=h_0=f=g_d=0$. We then set
$u_k=\varphi_k u$, $\pi_k=\varphi_k\pi$, $h_k=\varphi_k h$.
Then for $k=1,\dots, N$ $z_k=(u_k, \pi_k, \lj\pi_k\rj, h_k)$ satisfy the problems
\begin{alignat}2\label{7.3.1}
&\rho\partial_t u_k -\mu(x) \Delta u_k +\nabla \pi_k =F_k(u,\pi) & \mbox{ in } \BR^n\setminus\Sigma_k,\nn\\
&{\rm div}\, u_k =(\nabla\varphi_k|u)  &  \mbox{ in } \BR^n\setminus\Sigma_k,\nn\\
&P_{\Sigma_k}[\![u_k]\!] +c(t,x)\nabla_{\Sigma_k} h_k =\varphi_k P_{\Sigma_k} g_u
+ G_{u_k}(h) &  \mbox{ on } \Sigma_k ,\\
&[\![-\mu(x) (\nabla u_k +[\nabla u_k]^{\sf T})+ \pi_k]\!]\nu_{\Sigma_k}
-\sigma(\Delta_\Sigma h_k)\nu_{\Sigma_k}= \varphi_k g_k + G_{g_k}(u,h)
&  \mbox{ on } \Sigma_k,\nn\\
&-[\![\mu(x)  (\nabla u_k +[\nabla u_k]^{\sf T}) \nu_{\Sigma_k}\cdot
 \nu_{\Sigma_k}/\rho]\!]+ [\![\pi_k/\rho]\!] = \varphi_k  g_h + G_{h_k}(u)& \mbox{ on } \Sigma_k,\nn\\
&\partial_t h_k-[\![\rho u_k\cdot\nu_{\Sigma_k}]\!]/[\![\rho]\!]
+b(t,x)\cdot\nabla_{\Sigma_k} h_k/[\![\rho]\!]
= \varphi_k f_h + F_{h_k}(h) &  \mbox{ on } \Sigma_k,\nn\\
&u_k(0)=0\ &  \mbox{ in } \BR^n\setminus\Sigma_k,\nn\\
&h_k(0)=0\ &   \mbox{ on } \Sigma_k,\nn
\end{alignat}
where
\begin{align*}
F_k(u,\pi) = & (\nabla\varphi_k)\pi -\mu(x)[\Delta,\varphi_k]u,\\
G_{u_k}(h)=& c(t,x)(\nabla_{\Sigma_k}\varphi_k) h\\
G_{g_k}(u,h)= &-\lj \mu(x) (\nabla_{\Sigma_k}\varphi_k\otimes u+u\otimes \nabla_{\Sigma_k}\varphi_k)\rj \nu_{\Sigma_k} -\sigma[\Delta_{\Sigma_k},\varphi_k]h\nu_{\Sigma_k},\\
G_{h_k}(u) = & -\lj \mu(x) (\nabla_{\Sigma_k}\varphi_k\otimes u
+ u\otimes \nabla_{\Sigma_k}\varphi_k)\nu_{\Sigma_k}\cdot\nu_{\Sigma_k}/{\rho}\rj,\\
F_{h_k}(h)= & (b(t,x)|(\nabla_{\Sigma_k}\varphi_k) h)/\lj\rho\rj.
\end{align*}
For $k=0$ we have the standard one-phase Stokes problem with parameters
$\rho_2$, $\mu_2(x)$ on $\Omega$ with no-slip boundary condition on
$\pd\Omega$, i.e.,
\begin{alignat*}2
\rho_2\partial_t u_0 -\mu_2(x) \Delta u_0 +\nabla \pi_0 &=F_0(u,\pi)\ &\  &\mbox{ in } \Omega\setminus\Sigma,\nn\\
{\rm div}\, u_0 &=(\nabla\varphi_0|u)\ & \
&\mbox{ in } \Omega\setminus\Sigma,\nn\\
u_0 &=0\ &\quad  &\mbox{ on } \pd\Omega,\\
u_0(0)&=0\ &\quad &\mbox{ in } \Omega.
\end{alignat*}
For $k=N+l$ $(l=1,\dots, m)$ we have the Cauchy problem of the Stokes equation with parameters
$\rho_1$, $\mu_1(x)$, i.e.,
\begin{alignat*}2
\rho_1\partial_t u_{N+l} -\mu_1(x) \Delta u_{N+l} +\nabla \pi_{N+l} &=F_{N+l}(u,\pi)\ &\quad &\mbox{ in } \BR^n,\\
{\rm div}\, u_{N+l} &=(\nabla\varphi_{N+l}|u)\ &\quad  &\mbox{ in } \BR^n,\\
u_{N+l}(0)&=0\ &\quad &\mbox{ in } \BR^n.
\end{alignat*}
Though the right members $G_{u_k}(h)$, $G_{g_k}(u, h)$, $G_{h_k}(u)$, $F_{h_k}(h)$ have more time regularity than the corresponding data class, the terms
$(\nabla\varphi_k)\pi$ in $F_k(u,\pi)$ and $(\nabla\varphi_k|u)$ unfortunately do not have this property.
In order to remove this difficulty, we have to decompose the problem. Here is one major change compared to the construction in \cite{KPW10}. Consider the following  problem for the functions $\phi_k, \psi_k$.
\begin{align}\label{7.3.2}
\Delta\partial_t\phi_k &=\pd_tu\cdot \nabla\varphi_k={\rm div}(\partial_t u\varphi_k)
 &\quad  \mbox{in}\ \R^n\setminus\Sigma_k,\nn\\
\lj\pd_\nu\partial_t\phi_k\rj &=\lj\pd_t u\cdot\nu \varphi_k\rj
 &\quad  \mbox{on}\ \Sigma_k,\nn\\
\Delta\psi_k&={\rm div}\, F_k
 &\quad  \mbox{in}\ \R^n\setminus\Sigma_k,\\
\lj\pd_\nu\psi_k\rj &=\lj F_k \cdot\nu \rj
 &\quad  \mbox{on}\ \Sigma_k,\nn\\
\lj\psi_k-\rho\partial_t\phi_k\rj &= 0
 &\quad  \mbox{on}\ \Sigma_k,\\
\lj\psi_k/\rho-\partial_t\phi_k\rj &=0
 &\quad  \mbox{on}\ \Sigma_k,\nn
\end{align}
\eqref{7.3.2} is an elliptic system for $(\pd_t\phi_k,\psi_k)$ satisfying
the Lopatinskii-Shapiro condition, it is uniquely solvable and its solution satisfies $\psi_k,\partial_t\phi_k\in L_p(J;\dot{H}^1_p(\R^n\setminus\Sigma_k))$. Hence with $\phi_k(0)=0$ we obtain by a time integration $\phi_k\in H^1_p(J;\dot{H}^1_p(\R^n\setminus\Sigma_k))$.
Thanks to $\Delta\phi_k=(\nabla\varphi_k|u)$ in $\R^n\setminus\Sigma_k$ we conclude that $\phi_k\in L_p(J;\dot{H}^4_p(\R^n\setminus\Sigma_k))$, which implies
$$\nabla\phi_k\in H^1_p(J;L_p(\R^n))\cap L_p(J;H^3_p(\R^n\setminus\Sigma_k)),\quad \nabla\psi_k\in L_p(J;L_p(\R^n))).$$
Defining
$$\tilde{u}_k=u_k-\nabla\phi_k,\quad \tilde{F}_k(u,\pi)= F_k(u,\pi)-\nabla \psi_k, $$
we see that  div$\,\tilde F_k(u,\pi)=0$  and div$\,\tilde u_k =0$ in $\BR^n\setminus\Sigma_k$, as well as
$$(\lj\tilde F_k(u,\pi)\rj|\nu)=(\lj\tilde u_k\rj|\nu)=0\quad \mbox{ on }\; \Sigma_k.$$
Next we define
$$
\tilde\pi_k=\pi_k-\psi_k+\rho\pd_t\phi_k-\mu(x)\Delta\phi_k
$$
and observe that
\begin{align*}
&\lj\tilde\pi_k\rj = \lj\pi_k\rj -\lj\mu(x)\Delta \phi_k\rj =
\lj\pi_k\rj -\lj\mu(x)(\nabla\phi_k|u)\rj\\
&\lj\tilde\pi_k/\rho\rj = \lj\pi_k/\rho\rj -\lj\mu(x)\Delta \phi_k/\rho\rj =
\lj\pi_k/\rho\rj -\lj\mu(x)(\nabla\phi_k|u)/\rho\rj
\end{align*}
on $\Sigma_k$ by construction.  Now $\tilde z_k=(\tilde u_k,\tilde\pi_k,
\lj\tilde\pi_k\rj, h_k)$ satisfies the problem
\begin{alignat}2\label{7.3.3}
&\rho\partial_t \tilde u_k -\mu(x) \Delta \tilde u_k +\nabla \tilde \pi_k
=\tilde F_k(u,\pi) & \mbox{ in } \BR^n\setminus\Sigma_k,\nn\\
&{\rm div}\, \tilde u_k =0  &  \mbox{ in } \BR^n\setminus\Sigma_k,\nn\\
&P_{\Sigma_k}[\![\tilde u_k]\!] +c(t,x)\nabla_{\Sigma_k} h_k =\varphi_k P_{\Sigma_k} g_u+  G_{u_k}(h) &  \mbox{ on } \Sigma_k ,\nn\\
&[\![-\mu(x) (\nabla \tilde u_k +[\nabla \tilde u_k]^{\sf T})+ \tilde\pi_k]\!]\nu
-\sigma(\Delta_\Sigma h_k)\nu= \varphi_k g_k + \tilde G_{g_k}(u,h)
&  \mbox{ on } \Sigma_k,\nn\\
&-[\![\mu(x)  (\nabla \tilde u_k +[\nabla \tilde u_k]^{\sf T}) \nu\cdot
 \nu/\rho]\!]+ [\![\tilde\pi_k/\rho]\!] = \varphi_k  g_h + \tilde G_{h_k}(u)& \mbox{ on } \Sigma_k,\nn\\
&\partial_t h_k-[\![\rho \tilde u_k\cdot\nu]\!]/[\![\rho]\!]
+b(t,x)\cdot\nabla_{\Sigma_k} h_k/[\![\rho]\!]
= \varphi_k f_h + \tilde F_{h_k}(h) &  \mbox{ on } \Sigma_k,\nn\\
&u_k(0)=0\ &  \mbox{ in } \BR^n\setminus\Sigma_k,\nn\\
&h_k(0)=0\ &   \mbox{ on } \Sigma_k,\nn\\
\end{alignat}
where
\begin{align*}
\tilde G_{g_k}(u,h)= & G_{g_k}(u,h)  + 2\lj \mu(x)\nabla^2\phi_k\rj \nu_{\Sigma_k}
-\lj\mu(x)(\nabla\varphi_k|u)\rj\nu_{\Sigma_k},\\
\tilde G_{h_k}(u) = & G_{h_k}(u) +2\lj \mu(x) \nabla^2\phi_k/\rho\rj -
\lj\mu(x)(\nabla\varphi_k|u)\nu_{\Sigma_k}\cdot\nu_{\Sigma_k}/\rho\rj\\\
\tilde F_{h_k}(h)= & F_{h_k}(h) + \lj \rho \pd_{\nu_k}\phi_k\rj /\lj\rho\rj.
\end{align*}
Now we know that the data in \eqref{7.3.3} satisfy the assumption of Corollary 3.2 in
\cite{KPW10}. Therefore the solution $\tilde\pi_k$ of \eqref{7.3.3} has more time
regularity, we have $\bar\pi_k\in {}_0 H_p^\alpha (J;L_p(\Omega))$ for each
$\alpha\in (0,1/2-1/2p)$. Rewrite \eqref{7.3.3} abstractly as
$$
L_k\tilde z_k= H_k + B_k z.
$$
Then by the same argument as in \cite{KPW10} we obtain
$$
\|z_k\|_{\EE}\le C(\|H_k\|_{\FF} + a^\gamma \|z\|_{\EE}).
$$
Summing over all $k$, $z=\sum_{k=1}^{N+m}z_k$ yields
$$
\|z\|_{\EE}\le C(\|H\|_{\FF} + a^\gamma \|z\|_{\EE}).
$$
Therefore choosing the length $a$ of the time interval small enough,
we obtain the a priori estimate
$$
\|z\|_{\EE}\le C\|H\|_{\FF}.
$$
Since the problem under consideration is time invariant, repeating
this argument finitely many times, we  may conclude that the operator
$L:{}_0\EE \to {}_0\FF$ which maps solutions to their data is injective
and has a closed range, i.e., $L$ is a semi-Fredholm operator.
\par
It remains to prove surjectivity of $L$. For this we employ the continuation
method for semi-Fredholm operators, which states that the Fredholm index remains constant under homotopies $L(\lambda)$, as long as the ranges of $L(\lambda)$ stay closed. For this purpose, we introduce a first continuation
parameter $\alpha\in [0,1]$ by replacing the 7th equation of \eqref{linFB}
into
$$
\partial_t h + \alpha (-\Delta_{\Sigma})^{\frac12}h
-(1-\alpha) \left( [\![\rho u\cdot\nu]\!]/\lj\rho\rj
+b(t,x)\cdot\nabla_\Sigma h/\lj\rho\rj \right)
=f_h\quad\mbox{ on } \Sigma.
$$
With minor modifications, the analysis in subsection 7.1 shows that the corresponding problem is well-posed
for each $\alpha\in[0,1]$ in the case of a flat interface with bounds independent
of $\alpha\in[0,1]$. Therefore the same is true for bent interfaces and then
by the above estimates also for a general geometry.
Thus we only need to consider the case $\alpha=1$.

To prove surjectivity in this case, note that the equation for $h$ is independent
from those for $u$ and $\pi$, and it is uniquely solvable in the right regularity
class because of maximal regularity for the Laplace-Beltrami operator.
So we may set now $h=0$.

Next we introduce a second  continuation parameter $\beta\in[0,1]$ by
$$
P_\Sigma u_2=\beta P_\Sigma u_1 + P_\Sigma g_u, \quad
-\beta P_\Sigma T_2(u,\pi)\nu = -P_\Sigma T_1(u,\pi)\nu =P_\Sigma g
$$
with $T(u,\pi)=\mu(x)(\nabla u+[\nabla u]^{\sf T})-\pi$.

Again, we can prove that the a priori estimates are uniform for $\beta\in[0,1]$.
The remaining problem for $\beta=0$ decouples into a one-phase Stokes problem with
mixed Dirichlet-Neumann boundary condition in $\Omega_2$, Dirichlet condition on $\partial\Omega$ and so-called outflow conditions on $\Sigma$, and a one-phase Stokes problem with
pure Neumann boundary condition in $\Omega_1$. According to  \cite{BKP12}
these are well-known to be solvable. This shows that we have surjectivity in the case
$\alpha=1$ and $\beta=0$, hence by the continuation method also for $\alpha=0$ and $\beta=1$.
The proof of Theorem \ref{th:3.1} is now complete.

\section{Appendix}
{\small
To save space, we follow here the derivation and notation in \cite{PrSh12}, Section 4.2.

\medskip

\noindent
{\bf (a) The symbol of $S_\lambda^{11}$.}\, To obtain the algebraic system for the symbol of  $[S_\lambda^{11}]^{-1}$ we set $g_2=0$ and let $k$ be given. Then (4.9) remains valid as well as the formulas for $a_1, a_2$. For $\alpha_1,\alpha_2$ we have here the equations
\begin{align*}
\frac{\rho_2\sqrt{\mu_2}}{\omega_2}\beta_2+\frac{\rho_1\sqrt{\mu_1}}{\omega_1}\beta_1 +(\rho_1\alpha_1+\rho_2\alpha_2)|\xi|&= [\![\rho]\!]\hat{k}\\
\frac{2}{\rho_2}(\mu_2\beta_2 +\mu_2\alpha_2|\xi|^2)+\lambda\alpha_2 -\frac{2}{\rho_1}(\mu_1\beta_1 +\mu_1\alpha_1|\xi|^2)-\lambda\alpha_1 &=0.
\end{align*}
Inserting $\beta_k$ from (4.9,) this system becomes
\begin{align}\label{s11}
p_1\alpha_1-p_2\alpha_2&=0,\\
q_1\alpha_1+q_2\alpha_2&=[\![\rho]\!]k,\nn
\end{align}
where
\begin{align*}
p_1&= \rho_1\lambda[ \frac{1}{\rho_1}-2[\![\mu/\rho]\!]\frac{1}{\gamma(z)\omega_1(z)}\frac{\omega_1(z)-1}{\omega_1(z)+1}]=:\rho_1\lambda p_1^0,\\
p_2&= \rho_2\lambda[ \frac{1}{\rho_2}+2[\![\mu/\rho]\!]\frac{1}{\gamma(z)\omega_2(z)}\frac{\omega_2(z)-1}{\omega_2(z)+1}]=:\rho_2\lambda p_2^0,
\end{align*}
and
\begin{align*}
q_1&= \frac{\rho_1\lambda}{|\xi|\gamma(z)}[ \frac{\rho_1}{\omega_1(z)} +\frac{\rho_1\mu_2\gamma_2(z)}{\mu_1\omega_1(z)(\omega_1(z)+1)}
+\frac{\rho_2}{\omega_1(z)\omega_2(z)}\frac{\omega_1(z)-1}{\omega_1(z)+1}]=: \frac{\rho_1\lambda}{|\xi|\gamma(z)^2}q_1^0\\
q_2&= \frac{\rho_2\lambda}{|\xi|\gamma(z)}[ \frac{\rho_2}{\omega_2(z)} +\frac{\rho_2\mu_1\gamma_1(z)}{\mu_2\omega_2(z)(\omega_2(z)+1)}
+\frac{\rho_1}{\omega_1(z)\omega_2(z)}\frac{\omega_2(z)-1}{\omega_2(z)+1}]=: \frac{\rho_2\lambda}{|\xi|\gamma(z)^2}q_2^0,
\end{align*}
where the scaling $z=\lambda/|\xi|^2$ is employed, and recall
$$ \omega_k(z)=\sqrt{1+\rho_kz/\mu_k},\quad \gamma_k(z)= \omega_k(z)+1/\omega_k(z),\quad\gamma(z)=\mu_1\gamma_1(z)+\mu_2\gamma_2(z).$$
This yields the transformed interface pressures
\begin{align}\label{pi-s11}
\hat{\pi}_1&= \rho_1\lambda\alpha_1= [\![\rho]\!]\frac{p_2^0}{p_1^0q_2^0+p_2^0q_1^0}|\xi|\gamma(z)^2 \hat{k},\\
\hat{\pi}_2&= \rho_2\lambda\alpha_2= [\![\rho]\!]\frac{p_1^0}{p_1^0q_2^0+p_2^0q_1^0}|\xi|\gamma(z)^2 \hat{k}.\nn
\end{align}
Note that $p_k^0,q_0^k$ are holomorphic in $\C\setminus(-\infty,-\eta]$, $\eta= \min\{\mu_k/\rho_k\}>0$, and we have
$$ p_k^0(0)=1/\rho_k,\quad q_k^0(0)=2(\mu_1+\mu_2)^2\rho_k/\mu_k,$$
and
$$p_k^0(\infty)=1/\rho_k,\quad q_k^0(\infty)=(\rho_1+\rho_2)\sqrt{\rho_k\mu_k}.$$
Therefore $p_k^0,q_k^0$  are holomorphic and bounded on $\Sigma_{\pi/2+\varepsilon}\cup B_\varepsilon(0)$, for small $\varepsilon>0$.

So we need to show that the {\em Lopatinskii-Determinant} $r^0(z):=p_1^0(z)q_2^0(z)+p_2^0(z)q_1^0(z)$ has no zeros in ${\rm Re}\, z\geq0$.
After some tedious algebra, expanding and collecting terms, we obtain the factorization
$r^0(z)=r^0_1(z)r^0_2(z),$
where
$$r_1^0(z) = \gamma(z)/[\omega_1(z)\omega_2(z)(\omega_1(z)+1)(\omega_2(z)+1)],$$
and
\begin{align*}
r^0_2(z)&=(\frac{\rho_2}{\rho_1}\omega_1(z)+ \frac{\rho_1}{\rho_2}\omega_2(z))(\omega_1(z)+1)(\omega_2(z)+1)(\omega_1(z)+1)(\omega_2(z)+1)\\
&+ 2(\omega_1(z)-1)(\omega_2(z)-1)
+2\frac{\mu_2\rho_1}{\mu_1\rho_2}(\omega_2(z)-1)+2\frac{\mu_1\rho_2}{\mu_2\rho_1}(\omega_1(z)-1)\\
&+ \frac{\mu_2\rho_1}{\mu_1\rho_2}(\omega_2^2(z)+1)(\omega_2(z)+1)+\frac{\mu_1\rho_2}{\mu_2\rho_1}\gamma_1(z)\omega_1(z)(\omega_1(z)+1)
\end{align*}
Obviously, $r_1^0(z)$ has no zeros in $\C\setminus(-\infty,0])$; so we only have to look at $r^0_2(z)$. One easily checks that each summand of $r^0_2(z)$ has nonnegative imaginary part, provided $z\in \C$ is such that ${\rm Re }\,z,{\rm Im}\, z\geq 0$, so $r^0_2(z)$ can only be zero if each summand is zero. But this is not possible, as already the first term shows.

\medskip

\noindent
{\bf (b) The symbol of $S_\lambda^{22}$.}\, To obtain the algebraic system for the symbol of  $[S_\lambda^{22}]^{-1}$ we set $g_1=0$ and let $j$ be given. Then (4.9) remains valid as well as the formulas for $a_1, a_2$. For $\alpha_1,\alpha_2$ we have here the equations
\begin{align*}
2\mu_2(\beta_2 + |\xi|^2\alpha_2) +\rho_2\lambda\alpha_2 -2\mu_1(\beta_1 + |\xi|^2\alpha_1) +\rho_1\lambda\alpha_1&=0,\\
\frac{\sqrt{\mu_2}}{\omega_2}\beta_2 +|\xi|\alpha_2 +\frac{\sqrt{\mu_1}}{\omega_1}\beta_1 +|\xi|\alpha_1&=[\![1/\rho]\!]\hat{j}.
\end{align*}
Inserting $\beta_k$ from (4.9) this system becomes
\begin{align}\label{s22}
p_1\alpha_1-p_2\alpha_2&=0,\\
q_1\alpha_1+q_2\alpha_2&=[\![1/\rho]\!]j,\nn
\end{align}
where
\begin{align*}
p_1&= \rho_1\lambda[1+2[\![\mu]\!]\frac{1}{\gamma(z)\omega_1(z)}\frac{\omega_1(z)-1}{\omega_1(z)+1}]=:\rho_1\lambda p_1^0,\\
p_2&= \rho_2\lambda[1-2[\![\mu]\!]\frac{1}{\gamma(z)\omega_2(z)}\frac{\omega_2(z)-1}{\omega_2(z)+1}]=:\rho_2\lambda p_2^0,
\end{align*}
and
\begin{align*}
q_1&= \frac{\rho_1\lambda}{|\xi|\omega_1(z)\gamma(z)}[1 +\frac{1}{\omega_2(z)}\frac{\omega_1(z)-1}{\omega_1(z)+1}
+\frac{\gamma_2(z)}{\mu_1(\omega_1(z)+1)}]=: \frac{\rho_1\lambda}{|\xi|\gamma(z)^2}q_1^0,\\
q_2&= \frac{\rho_2\lambda}{|\xi|\omega_2(z)\gamma(z)}[ 1 +\frac{1}{\omega_1(z)}\frac{\omega_2(z)-1}{\omega_2(z)+1}
+\frac{\gamma_1(z)}{\mu_2(\omega_2(z)+1)}]=: \frac{\rho_2\lambda}{|\xi|\gamma(z)^2}q_2^0.
\end{align*}
This yields
\begin{align}\label{pi-s22}
\hat{\pi}_1&= \rho_1\lambda\alpha_1= [\![1/\rho]\!]\frac{p_2^0}{p_1^0q_2^0+p_2^0q_1^0}|\xi|\gamma(z)^2 \hat{j},\\
\hat{\pi}_2&= \rho_2\lambda\alpha_2= [\![1/\rho]\!]\frac{p_1^0}{p_1^0q_2^0+p_2^0q_1^0}|\xi|\gamma(z)^2 \hat{j}.\nn
\end{align}
As in (a)  the functions $p_k^0,q_k^0$ are holomorphic in $\C\setminus(-\infty,-\eta]$, $\eta= \min\{\mu_k/\rho_k\}>0$, and we have
$$ p_k^0(0)=1,\quad q_k^0(0)=2(\mu_1+\mu_2)^2/\mu_k,$$
and
$$p_k^0(\infty)=1,\quad q_k^0(\infty)=(\sqrt{\rho_1\mu_1}+\sqrt{\rho_2\mu_2})^2\rho_k.$$
Therefore $p_k^0,q_k^0$  are holomorphic and bounded on $\Sigma_{\pi/2+\varepsilon}\cap B_\varepsilon(0)$, for small $\varepsilon>0$.

So we need to show that the Lopatinskii-determinant $r^0(z):=p_1^0(z)q_2^0(z)+p_2^0(z)q_1^0(z)$ has no zeros in ${\rm Re}\, z\geq0$.
After another tedious algebra, expanding and collecting terms, we obtain as in (a) the factorization
$r^0(z)=r^0_1(z)r^0_2(z),$
where
$$r_1^0(z) = \gamma(z)/[\omega_1(z)\omega_2(z)(\omega_1(z)+1)(\omega_2(z)+1)],$$
and
\begin{align*}
r^0_2(z)&= (\omega_1(z)+\omega_2(z))(\omega_1(z)+1)(\omega_2(z)+1)+ 2(\omega_1(z)-1)(\omega_2(z)-1)\\
&+\frac{\mu_2}{\mu_1}[2(\omega_2(z)-1)+(\omega_2^2(z)+1)(\omega_2(z)+1)]\\
&+ \frac{\mu_1}{\mu_2}[2(\omega_1(z)-1)+(\omega_1^2(z)+1)(\omega_1(z)+1)].
\end{align*}
Obviously, $r_1^0(z)$ has no zeros in $\C\setminus(-\infty,0])$; so we only have to look at $r^0_2(z)$. One easily checks that each summand of $r^0_2(z)$ has nonnegative imaginary part, provided $z\in \C$ is such that ${\rm Re }\,z,{\rm Im}\, z\geq 0$, so $r^0_2(z)$ can only be zero if each summand is zero. But this is not possible, as already the first term shows.

\medskip

\noindent
{\bf (c) The symbol of $s(\lambda,\tau)$.}\, We calculate the symbol
$s(\lambda,\tau)$ defined by \eqref{7.1.4}
in the same way as Sec.4-3 in \cite{PrSh12}. It is enough to seek the solution
of the problem
\begin{align}\label{App1}
\hat{v_2}(0)-\hat{v_1}(0)&= - c_0 i \xi \hat h,\nn\\
\mu_2(\partial_y \hat{v}_2(0)+i\xi \hat{w}_2(0))
-\mu_1(\partial_y \hat{v}_1(0)+i\xi \hat{w}_1(0))&=0,\nn\\
-2\mu_2\partial_y \hat{w}_2(0)+\hat{\pi}_2(0)
+2\mu_1\partial_y \hat{w}_1(0)-\hat{\pi}_1(0) &=0,\\
-2(\mu_2/\rho_2)\partial_y \hat{w}_2(0)+\hat{\pi}_2(0)/\rho_2
+2(\mu_1/\rho_2)\partial_y \hat{w}_1(0)-\hat{\pi}_1(0)/\rho_2 &=0\nn.
\end{align}
Multiplying the 1st and the 2nd equations by $i\xi$, and setting
$\beta_j = i\xi\cdot a_j$ for $a_j\in \C^{n-1}$, $j=1,2$, we have
\begin{align}\label{App2}
\beta_2-\beta_1 &= -|\xi|^2 \{(\alpha_2-\alpha_1)-c_0 \hat h\},\\
\sqrt{\mu_2}\omega_2 \beta_2 + \sqrt{\mu_1}\omega_1 \beta_1 &=- |\xi|^2
\big( 2|\xi|(\mu_1\alpha_1+\mu_2\alpha_2) +  (\mu_1\sqrt{\mu_1}/\omega_1)\beta_1+(\mu_2\sqrt{\mu_2}/\omega_2)\beta_2\big).\nn
\end{align}
Combining the 3rd and the 4th equation of \eqref{App1}, we obtain
\begin{equation}
\alpha_j=-2\mu_j\beta_j / (\omega_j^2+\mu_j|\xi|^2).
\label{App3}
\end{equation}
Substituting the formula into \eqref{App2} and using the scaling
$\omega_j=\sqrt{\mu_j}|\xi|\omega_j(z)$, $\gamma_j=\mu_j|\xi|\gamma_j(z)$,
we solve the system for $\beta_j$
\begin{equation*}
\begin{pmatrix}
-1+ 2/(\omega_1(z)^2+1)  &  1- 2/(\omega_2(z)^2+1)\\
\mu_1\{\gamma_1(z) - 4/(\omega_1(z)^2+1)\} &
\mu_2\{\gamma_2(z) - 4/(\omega_2(z)^2+1)\}
\end{pmatrix}
\begin{pmatrix}
\beta_1\\
\beta_2
\end{pmatrix}
=
\begin{pmatrix}
c_0\tau^2\hat h\\
0
\end{pmatrix},
\end{equation*}
where we set $\tau=|\xi|$ and $z=\lambda/\tau^2$.
Therefore substituting the $\beta_j$ and $\alpha_j$ by \eqref{App3} into
$$
-\frac{\lj\rho w(0)\rj}{\lj\rho\rj}
= - \frac{(\rho_1\alpha_1 +  \rho_2\alpha_2)\tau
+ ({\rho_2}/{\tau\omega_2})\beta_2
+ ({\rho_1}/{\tau\omega_1})\beta_1 } {\lj\rho\rj},
$$
we obtain \eqref{7.1.4}.

}
{\small
}

\end{document}